\tikzset{
    marks/.style={decoration={markings,
    mark=at position 0.15 with {\fill[red] circle [radius=1.5pt];},
    mark=at position 0.3 with {\fill[red] circle [radius=1.5pt];},
    mark=at position 0.7 with {\fill[blue] circle [radius=1.5pt];},
    mark=at position 0.85 with {\fill[blue] circle [radius=1.5pt];}}, postaction={decorate}},
    nobreak/.style={decoration={markings, mark=at position 0.5 with {\draw[white,line width=20pt] (0,-5pt) -- (0,5pt);}}, postaction={decorate}},
    middots/.style={decoration={markings, mark=at position 0.5 with {\node[draw=none] {$\cdots$};}}, postaction={decorate}},
}
\definecolor{myblue}{rgb}{0.09,0.32,0.44} 
\newtheorem{thm}{Theorem}[section] 
\newtheorem*{thm*}{Theorem}
\newtheorem{claim}[thm]{Claim}
\newtheorem{cor}[thm]{Corollary}
\newtheorem{defn}[thm]{Definition}
\newtheorem{fact}[thm]{Fact}
\newtheorem{lem}[thm]{Lemma}
\newtheorem{prop}[thm]{Proposition}
\newtheorem{question}[thm]{Question}
\newtheorem*{que*}{Question}
\newtheorem*{fact*}{Fact}
\newtheorem{rem}[thm]{Remark}
\newtheorem*{rem*}{Remark}
\newtheorem*{rems*}{Remarks}
\newcommand\Cref[1]{{Corollary~\ref{#1}}}
\newcommand{\N}{\mathbb{N}}
\newcommand{\Z}{\mathbb{Z}}
\newcommand{\R}{\mathbb{R}}
\newcommand{\E}{\mathbb{E}}
\newcommand{\ceil}[1]{\left\lceil #1 \right\rceil}
\newcommand{\argmin}{\mathrm{argmin}}
\newcommand{\osc}{\mathfrak{osc}}
\newcommand{\sph}{infinity superharmonic }
\newcommand{\Diam}{\mathrm{Diam}}
\newcommand{\en}{\mathcal{E}}
\newcommand{\hld}{H\"older }
\newcommand{\sign}[1]{\mathrm{sign}(#1)}
\renewcommand{\Pr}{\mathbb{P}}
\renewcommand{\sign}{\mathrm{sgn}}
\newcommand{\e}[1]{\exp(#1)}
\newcommand{\gs}{g^\sharp}
\newcommand{\lex}{\Psi}
\def\moverlay{\mathpalette\mov@rlay}
\def\mov@rlay#1#2{\leavevmode\vtop{%
   \baselineskip\z@skip \lineskiplimit-\maxdimen
   \ialign{\hfil$\m@th#1##$\hfil\cr#2\crcr}}}
\newcommand{\charfusion}[3][\mathord]{
    #1{\ifx#1\mathop\vphantom{#2}\fi
        \mathpalette\mov@rlay{#2\cr#3}
      }
    \ifx#1\mathop\expandafter\displaylimits\fi}
\newlength{\tempindent} 
\newcommand{\lazyenum}{
\setlength{\tempindent}{\parindent} 
\begin{enumerate}[leftmargin=0cm,itemindent=0.7cm,labelwidth=\itemindent,labelsep=0cm,align=left,label=(\arabic*)]
\setlength{\parskip}{\smallskipamount}
\setlength{\parindent}{\tempindent}
}
\newif
\def\@settitle{\begin{center}%
  \baselineskip14\p@\relax
    \normalfont\LARGE
  \@title
  \end{center}%
}
 \title{Convergence rate of  $\ell^p$-energy minimization   on graphs: \\ sharp polynomial bounds and a phase transition at $p=3$}
\author{\Large Gideon Amir}
\address{G. Amir, Bar-Ilan University, Ramat Gan 52900, Israel}
\email{Gideon.amir@biu.ac.il}
\author{Fedor Nazarov}
\address{F. Nazarov, Department of Mathematical Sciences, Kent State University, Kent, OH, USA}
\email{nazarov@math.kent.edu
}
\author{Yuval Peres}
\address{Y. Peres, Beijing Institute of Mathematical Sciences and Applications (BIMSA), Huairou district, Beijing, China}
\email{yperes@gmail.com}
\subjclass[2020]{}
\keywords{}
 \pgfplotsset{compat=1.18}
\begin{document}

\begin{abstract}
 We consider the following dynamics on a connected graph $(V,E)$ with $n$ vertices. Given $p>1$ and an initial opinion profile $f_0:V \to [0,1]$, at each integer step $t \ge 1$ a uniformly random vertex $v=v_t$ is selected, and the opinion there is updated to the value $f_{t}(v)$ that minimizes the sum
 $\sum_{w \sim v} |f_t(v)-f_{t-1}(w)|^p$ over neighbours $w$ of $v$. 
 The case $p=2$ yields  linear averaging dynamics, but for all $p \ne 2$ the dynamics are nonlinear. In the limiting case $p=\infty$ (known as {\em Lipschitz learning}), $f_t(v)$ is the average of the largest and smallest values of $f_{t-1}(w)$ among the neighbours $w$ of $v$.
  We show that the number of steps needed to reduce the oscillation of $f_t$ below $\epsilon$ is at most $n^{\beta_p}$ (up to logarithmic factors in $n$ and   $\epsilon$), where  $\beta_p:=\max(\frac{2p}{p-1},3)$; we prove that the exponent $\beta_p$ is optimal.
 The phase transition at $p=3$ is a new phenomenon. We also derive matching upper and lower bounds for convergence time as a function of $n$ and the average degree; these are the most challenging to prove. 
\end{abstract}
\maketitle
\section{Introduction}
Let   $G=(V,E)$ be a finite connected graph\footnote{ All the graphs considered in this paper are
undirected and  simple  (no self-loops or multiple edges).}, where each vertex $v$ is assigned an initial opinion $f_0(v)\in[0,1]$.  
Given $1<p<\infty$, the asynchronous $\ell^p$\textit{-energy minimization} dynamics on $G$ are defined by choosing uniformly a vertex $v_t\in V$ at each step $t \ge 1$, and updating the value at $v_t$ to   minimize  the $\ell^p$-energy of $f_t$, leaving the values at other vertices unchanged:
\begin{equation}\label{eq:pdef}
    f_t(v) := \begin{cases}
        \argmin_{y} \sum_{w\sim v}|f_{t-1}(w)-y|^p \, \quad &v=v_t\, , \\
        f_{t-1}(v)  \quad &v\neq v_t \, .
    \end{cases}
\end{equation} 
The minimizing $y$ is unique, since  the function $x \mapsto |x|^p$ is strictly convex on $\mathbb R$.

The case $p=2$ yields  a linear averaging dynamics and is well understood; see Section \ref{s:background}.
For other $p$, the dynamics are nonlinear.

The dynamics can also be defined for $p=\infty$ by using the following update rule instead of \eqref{eq:pdef}:
\begin{equation}\label{eq:liplearning}
    f_t(v):= \begin{cases}
        \frac12\Bigl(\max_{u\sim v}f_{t-1}(u) + \min_{w\sim v}f_{t-1}(w)\Bigr) \, \quad &v=v_t \, ,\\
        f_{t-1}(v) \ \ \quad   &v\neq v_t\, .
    \end{cases}
\end{equation}
That is, we replace the value at $v_t$ by the average of the maximal and minimal values at its neighbours. This update rule is known as \textit{Lipschitz learning} (see, e.g., \cite{kyng2015algorithms}), since each update minimizes the local Lipschitz
constant at $v_t$.

The following is well known and easy to prove, see
Section \ref{sub:basic}.
\begin{fact}\label{fa:conv to cons}
    For every $1<p\leq \infty$, any finite connected graph $G=(V,E)$, and every initial profile $f_0$,
    almost surely the following limits exist and are equal:
    \begin{equation*}
        \forall u,v\in V \ \ \lim_{t\rightarrow \infty}f_t(v)\stackrel{a.s.}{=}\lim_{t\rightarrow \infty}f_t(u) \, .
    \end{equation*}
\end{fact}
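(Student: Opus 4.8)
The plan is to first establish the monotone structure of the dynamics and then show that the oscillation $\osc(f_t):=\max_v f_t(v)-\min_v f_t(v)$ tends to $0$ almost surely. Write $M_t:=\max_v f_t(v)$ and $m_t:=\min_v f_t(v)$. The starting observation is that each update is \emph{strictly interior}: if $v$ is refreshed, its new value lies in $[\min_{w\sim v}f_{t-1}(w),\max_{w\sim v}f_{t-1}(w)]$, and in fact strictly below $\max_{w\sim v}f_{t-1}(w)$ whenever some neighbour has a strictly smaller value. For $p=\infty$ this is immediate from \eqref{eq:liplearning}; for $1<p<\infty$ it follows because the derivative of $y\mapsto\sum_{w\sim v}|f_{t-1}(w)-y|^p$ at $y=\max_{w\sim v}f_{t-1}(w)$ is strictly positive. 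Consequently $M_t$ is non-increasing, $m_t$ is non-decreasing, and $\osc(f_t)$ is non-increasing, so all three converge surely. Since $m_t\le f_t(v)\le M_t$ for every $v$, it suffices to prove that $\osc(f_t)\to 0$ almost surely, for then $M_t$ and $m_t$ share a common limit $L$ and $f_t(v)\to L$ for all $v$.

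The combinatorial core is the deterministic claim that from any non-constant configuration $f$ there is a sequence of at most $n$ updates that strictly decreases the maximum. To see this, let $A=\{v:f(v)=M\}$ with $M=\max f$; since $f$ is non-constant and $G$ is connected, $A\neq V$ and some $v\in A$ has a neighbour outside $A$. Updating such a boundary vertex lowers its value strictly below $M$ by the interiority property and changes nothing else, so it removes exactly one vertex from the argmax set. Iterating, I peel off the vertices of $A$ one at a time (each intermediate set is again a proper nonempty subset of the connected $G$, hence still has a boundary vertex) until $A$ is a singleton; one further update then pushes $M$ strictly down. Throughout, $m$ never decreases, so the oscillation strictly decreases as well.

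To turn this pointwise strict decrease into a quantitative, uniform one, I would invoke compactness. Each update map $T_v$ on $[0,1]^V$ is continuous (the $\ell^p$-minimizer depends continuously on the neighbouring values by strict convexity, and $(\max+\min)/2$ is continuous), and $\osc$ is continuous. Fix $\epsilon>0$ and set $K_\epsilon=\{f\in[0,1]^V:\osc(f)\ge\epsilon\}$, which is compact. For each $f\in K_\epsilon$ the claim above supplies a finite word $\sigma_f$ of updates with $\osc(T_{\sigma_f}f)<\osc(f)$; by continuity the same word decreases $\osc$ by a definite amount on a whole neighbourhood of $f$. A finite subcover then yields a length bound $L$ and a gap $\eta>0$ such that from \emph{every} $f\in K_\epsilon$ some word of length $\le L$ lowers $\osc$ by at least $\eta$, an event of probability at least $q:=n^{-L}>0$ over the next $L$ random choices.

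Finally I would run a conditional Borel--Cantelli argument. Suppose $\Pr(B_\epsilon)>0$, where $B_\epsilon=\{\osc(f_t)\ge\epsilon\text{ for all }t\}$. Splitting time into blocks of length $L$, on $B_\epsilon$ each block, conditionally on the history up to its start, succeeds (reduces $\osc$ by at least $\eta$) with probability at least $q$; since these conditional probabilities sum to infinity on $B_\epsilon$, L\'evy's conditional Borel--Cantelli lemma forces infinitely many successes almost surely on $B_\epsilon$. But each success lowers the non-negative quantity $\osc(f_t)$ by at least $\eta$, so at most $\osc(f_0)/\eta$ successes can ever occur -- a contradiction. Hence $\Pr(B_\epsilon)=0$ for every $\epsilon>0$, giving $\osc(f_t)\to 0$ almost surely. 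The main obstacle is precisely the passage from pointwise to uniform decrease: the per-step drop in $M_t$ is not bounded below, so monotone convergence alone does not force $\osc(f_t)\to 0$; the compactness upgrade (relying on continuity of the update maps) together with the conditional Borel--Cantelli lemma is what closes this gap.
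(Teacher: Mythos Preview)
Your argument is correct and takes a genuinely different route from the paper's sketch in Section~\ref{sub:basic}. There, convergence is deduced from a Lyapunov function---the energy $\en_p$ for $1<p<\infty$, or the lexicographic potential $\lex$ for $p=\infty$---by arguing that every subsequential limit of $\{f_t\}$ is a fixed point of the dynamics and then invoking the maximum principle to conclude that fixed points are constant. You instead work directly with $\osc$: you exhibit an explicit update word that strictly lowers the maximum (peeling the argmax set from its boundary in the connected graph), upgrade this pointwise decrease to a uniform one via compactness of $[0,1]^V$ and continuity of the update maps $T_v$, and finish with a conditional Borel--Cantelli argument. Your approach has the virtue of treating all $1<p\le\infty$ uniformly, avoiding the separate $\lex$ construction at $p=\infty$; the paper's route, by contrast, applies to any update sequence that visits each vertex infinitely often (no randomness needed), and the energy potential it introduces is precisely what drives the quantitative decay estimates in the rest of the paper.
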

\begin{que*}
    How fast do opinions converge to a consensus?
\end{que*}

We measure the distance to consensus using oscillations:  
\begin{equation}
    \osc(f):= \max_{v}f(v) - \min_w f(w) \, .
\end{equation}
The $\epsilon$-consensus time is defined by:
$$ \tau_p(\epsilon) := \min\{t\geq 0 :\, \osc(f_t)\leq \epsilon\}\, .$$

We will also use the $\ell^p$ energy: 

\begin{equation}\label{eq:energy def}
    \en_p(f):= \sum_{\{u,v\} \in E}  |f(u)-f(v)|^p \, .
\end{equation}

\subsection{A bird's eye view}


We begin with the case $1<p<\infty$. We give sharp bounds that exhibit a surprising phase transition at $p=3$.
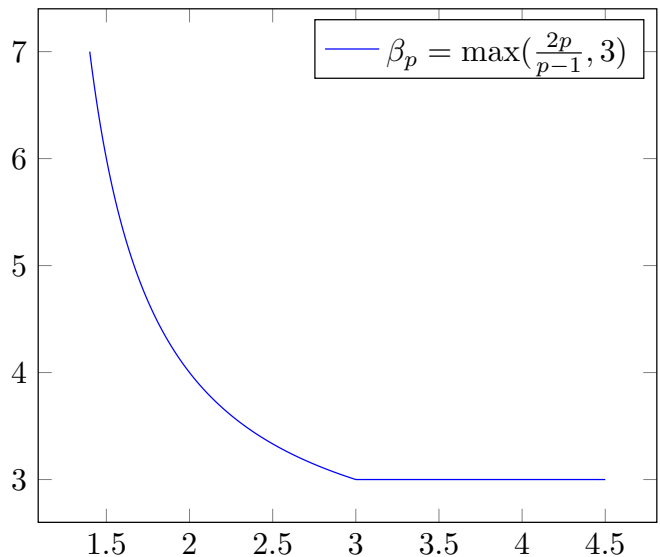
\begin{figure}[h]
    \centering
\begin{tikzpicture}[thick, scale = 1.2]
    \begin{axis}[domain=1.4:4.5,samples=400]
    
        \addplot+[mark=none] {max(2*x/(x-1), 3)};
        \addlegendentry{$\beta_p=\max(\frac{2p}{p-1},3)$}
    \end{axis}
\end{tikzpicture}
    \caption{The exponent $\beta_p$ that governs convergence to consensus in  Theorem \ref{th:lp no boundary simplified}. Note that $\frac{d}{dp}\beta_p\to -1/2$ as $p \uparrow 3.$}
    \label{fig:beta}
\end{figure} 

\begin{thm}\label{th:lp no boundary simplified}
    Fix $1<p<\infty$ and   define
    $\beta_p:=\max\Bigl\{\frac{2p}{p-1}, 3\Bigr\}$.
    There exist constants   $C_p,c_p>0$ 
    such that for any $n \ge 2$, any connected graph $(V,E)$ with $|V|=n$, and any   initial profile $f_0:V \to [0,1]$, the dynamics \eqref{eq:pdef} satisfy
\smallskip    
    \begin{itemize}
        \item[\bf{(a)}]
 $\E[\en_p(f_t)]\leq \e{-c_p  n^{-\beta_p}t}\en_p(f_0) \;$ for all $t>0$ {\rm ;}
  \smallskip    
    \item[\bf{(b)}] $\E[\tau_p(\epsilon)]\leq C_p n^{\beta_p}  \log \frac{n}{\epsilon} 
    \;$ for all $\epsilon \in (0,1/2]$.
    \end{itemize}
          Conversely, for some   $\tilde{c}_p>0$ and all  large $N$, there exist a connected graph $G=(V,E)$ with $|V|\leq N$ and an initial profile $f_0:V\rightarrow [0,1]$, such that 
\begin{equation}\label{eq:converse main} \tau_p(1/2)\geq  
    \tilde{c}_p N^{\beta_p}  \,,
\end{equation}
for any sequence of update vertices.
\end{thm}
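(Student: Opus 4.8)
The plan is to derive both upper bounds (a) and (b) from a single one-step energy-contraction estimate, and to prove the converse by two explicit constructions, one on each side of the phase transition.

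For the upper bounds the central object is the per-site energy drop: if vertex $v$ is selected at a step where the current profile is $f$, the energy decreases by exactly
\[
    D_v(f) := \sum_{w\sim v}|f(w) - f(v)|^p \;-\; \min_y\sum_{w\sim v}|f(w)-y|^p \;\ge\; 0 ,
\]
and since $v$ is uniform, $\E[\en_p(f_t)\mid f_{t-1}] = \en_p(f_{t-1}) - \tfrac1n\sum_v D_v(f_{t-1})$. Thus (a) is equivalent to the nonlinear Poincaré-type inequality
\begin{equation*}
    \sum_{v\in V}D_v(f)\ \ge\ c_p\, n^{1-\beta_p}\,\en_p(f)\qquad\text{for every }f, \tag{$\dagger$}
\end{equation*}
after which taking expectations and iterating gives $\E[\en_p(f_t)]\le(1-c_pn^{-\beta_p})^t\en_p(f_0)\le e^{-c_pn^{-\beta_p}t}\en_p(f_0)$. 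Proving $(\dagger)$ is the heart of the matter and, I expect, the main obstacle on the upper-bound side. The decisive nonlinear phenomenon is that when a value must move across a vertex of degree $d$ against $d$ pinning neighbours, its equilibrium displacement is throttled to order $(\text{gap})\cdot d^{-1/(p-1)}$; this stubbornness is severe for small $p$ and should be responsible for the exponent $2p/(p-1)$ when $1<p<3$. When instead the drop is spread smoothly along a geodesic, so interior vertices are nearly balanced, the dissipation is controlled by a quadratic curvature term and reproduces the diffusive exponent $3$. A single vertex carries no useful bound—the maximal-gap vertex may be perfectly balanced, with $D_v=0$—so $(\dagger)$ must be extracted globally; I would prove it by a scale-by-scale accounting that always locates enough total dissipation in whichever regime dominates, the final exponent being the larger, $\beta_p=\max\{2p/(p-1),3\}$.

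To pass from (a) to (b) I would first record the elementary comparison between energy and oscillation: along a shortest path of length $\ell\le n$ joining a maximizing to a minimizing vertex, the power-mean inequality gives $\en_p(f)\ge \osc(f)^p\,\ell^{1-p}\ge \osc(f)^p\, n^{1-p}$, so $\osc(f)\le\epsilon$ whenever $\en_p(f)\le\epsilon^p n^{1-p}$. Since $f_0$ takes values in $[0,1]$ we have $\en_p(f_0)\le n^2$, so by (a) and Markov's inequality the probability that $\en_p(f_T)$ still exceeds $\epsilon^p n^{1-p}$ is at most $n^{p+1}\epsilon^{-p}e^{-c_pn^{-\beta_p}T}$, which drops below $1/2$ once $T=C_pn^{\beta_p}\log(n/\epsilon)$. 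Because the global maximum is non-increasing and the global minimum non-decreasing (each update lands in the neighbour min–max interval), $\osc(f_t)$ is non-increasing and never re-crosses $\epsilon$; a standard restart argument (each block of $T$ steps succeeds with probability $\ge 1/2$, by the Markov property, so $\E\lceil\tau_p(\epsilon)/T\rceil\le 2$) then yields $\E[\tau_p(\epsilon)]\le 2C_pn^{\beta_p}\log(n/\epsilon)$, which is (b).

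For the converse I would give two families matching the two mechanisms, the overriding difficulty being robustness over all update sequences, which forbids expectation/spectral arguments and forces monotone potentials that change by a bounded amount at every single-site update regardless of the adversary's choice. When $p\ge 3$ and $\beta_p=3$, I would take a cycle (or path) on $\asymp N$ vertices with a two-arc initial profile; as every interior vertex has degree two, the update is plain averaging for all $p$, so the dynamics are $p$-independent and purely diffusive and oscillation can only drop as the jumps spread. No ordering can outpace diffusion—a symmetric averaging step has no advection, and on the path oscillation is reduced only through the two degree-one endpoints while interior updates merely redistribute the gap mass—which I would quantify with a location-weighted potential (gap mass weighted by distance to the boundary) whose per-move change is controlled by the current maximal gap, forcing $\Omega(N^3)$ steps for every sequence. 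When $1<p<3$ and $\beta_p=2p/(p-1)>3$, the extra slow-down beyond $N^3$ must come from the stubbornness above: I would decorate a path with pendant vertices so each path vertex has degree $\asymp d$, throttling every transfer by $d^{-1/(p-1)}$, and optimize $d$ against the path length subject to at most $N$ vertices to realize the exponent $2p/(p-1)$. This $1<p<3$ construction is the more delicate of the two, and making its robust lower bound precise is where I expect the hardest work on the converse side.
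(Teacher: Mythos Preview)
Your overall architecture matches the paper's: reduce (a) to a one-step relative energy drop (your $(\dagger)$, the paper's Lemma~4.1), iterate, then pass to (b) via Markov and the edge-on-a-geodesic bound $\en_p(f)\ge\osc(f)^p n^{1-p}$; and handle the converse by explicit graphs with monotone potentials robust to all update sequences. The passage (a)$\Rightarrow$(b) is essentially identical to the paper's (they integrate the tail directly rather than restart, but this is cosmetic).

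The substantive gap is in $(\dagger)$. Your heuristic of ``scale-by-scale accounting'' with two competing mechanisms is qualitatively sound, but it does not point toward the actual proof, and I do not see how to make it work as stated. The paper's argument is quite different: order the vertices so $g_1\ge\cdots\ge g_n$, introduce $R_i^\pm=\sum_{j\lessgtr i,\,j\sim i}|g_i-g_j|^{p-1}$ and $\rho_i=|R_i^+-R_i^-|$, and normalise so $\sum_i\rho_i=1$. A telescoping identity (their \eqref{eq:green}) then bounds $\en_p(g)\le n$. The per-vertex drop is controlled as in your $D_v$, but bounded below by an explicit quantity $I_i$ (their Claim~4.2) via a second-derivative estimate on $\en_p$ along the update direction. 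The global bound $\sum_i I_i\ge c_p\,nF(n,p,D_G)$ comes from Chebyshev's ``other'' inequality (monotone rearrangement, used to show $\sum_i d_i R_i\le 9n$) combined with H\"older applied differently in the three ranges $p\le 2$, $2<p<3$, $p\ge 3$; this is where the phase transition emerges. None of these ingredients are visible in your sketch, and the global rearrangement step in particular does not fit a ``scale-by-scale'' picture.

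For the converse at $p\ge 3$ your cycle proposal is correct; the paper's potential is not quite ``gap mass weighted by distance'' but rather a dual \emph{fragmentation} process whose second moment and pairwise $\ell^1$-energy are tracked (via a result of Paterson et al.), or alternatively the arclength of the concave envelope of $f_t-\tfrac{k^2}{n^2}$. For $1<p<3$ your construction differs from the paper's and is likely harder to push through: you decorate \emph{every} path vertex with $d$ pendants, but then the interior updates are genuinely nonlinear and the pendants' values lag the path, making the comparison delicate. The paper instead places all the high-degree obstruction at the two endpoints (barbell: path of length $\asymp n$ with cliques of size $\asymp n$ at each end), so that the path interior remains degree $2$ and updates there are plain averaging for every $p$. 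The slow-down is then concentrated at two vertices, and the analysis proceeds by a monotone comparison profile $h_0\ge f_0$ that is $p$-superharmonic at the junction vertices; one shows $h_t\le h_0+C$ for $t$ up to $\asymp n^{\beta_p}$, essentially by checking the superharmonicity criterion \eqref{eq:psuper} at the junctions.
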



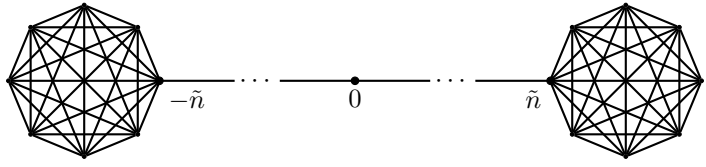
\begin{figure}\label{f:barbell}
    \centering
\begin{tikzpicture}
    \filldraw[black] (0,0) circle (0.05cm) node[anchor=north] {$0$};
    \foreach \s in {-1, 1} {
        \draw[thick] (0,0) -- (\s,0);
        \draw (\s*1.3, 0) node {$\cdots$};
        \draw[thick] (\s*1.6,0) -- (\s*2.6,0);
        \ifthenelse{\s=1}{\filldraw[black] (\s*2.6,0) circle (0.05cm) node[anchor=north east] {$\tilde{n}$}}{\filldraw[black] (\s*2.6,0) circle (0.05cm) node[anchor=north west] {$-\tilde{n}$}};
    }

    \def\n{8}
    \foreach \cent in {3.6, -3.6} {
    \node[circle,minimum size=2 cm] at (\cent,0) (b) {};
    \foreach\x in{1,...,\n}{
      \node[inner sep=0cm, minimum size=0.05cm, draw,circle, fill=black] (n-\x) at (b.{360/\n*\x}) {} ;
    }
    \foreach\x in{1,...,\n}{
      \foreach\y in{\x,...,\n}{
        \ifnum\x=\y\relax\else
          \draw[thick] (n-\x) edge[-] (n-\y);
        \fi
      }
      }
    }
\end{tikzpicture}
    \caption{The barbell graph with $n=4\tilde{n}  -1$ vertices: two cliques of size $\tilde{n}$ connected by a line segment of length $2\tilde{n}$. It provides the lower bound in Theorem \ref{th:lp no boundary simplified}, when the initial profile $f_0$ equals $0$ to the left of $0$, equals $1$ to the right of $0$, and satisfies $f(0)=1/2$.}
    \label{fig:barbell}
\end{figure}

 Theorem \ref{th:lp no boundary simplified} will follow from Theorem \ref{th:main lpbound}, which gives an upper bound on the energy in terms of the number of vertices $|V|$ and their average degree.  
 Graphs that are used to prove the lower bounds in \eqref{eq:converse main} are the  barbell   (see Figure \ref{fig:barbell}) for $1<p \le 3$, and the cycle for $p \ge 3$. More precise lower bounds are stated in the next section and proved in Section \ref{s:lowerbounds}.




Next we consider the case $p=\infty$. 

\begin{thm}\label{th:linfty no boundary simplified}
  {\bf (a)} Let $G=(V,E)$ be a connected graph with $|V|=n$. Run the Lipschitz learning dynamics \eqref{eq:liplearning} on $G$ starting with initial profile $f_0:V\to [0,1]$. Then for every $0<\epsilon<1$, the $\epsilon$-consensus time satisfies \begin{equation}\label{eq:lipbound}
      \E[\tau_{\infty}(\epsilon)] \leq n(\log n+1) (\Diam(G)+1)^2 \log \frac{2}{\epsilon}\, .
  \end{equation}
  {\bf (b)} Conversely, for every $N\geq L\geq 2$, there are a connected graph $G=(V,E)$ with $|V|\leq N$ and $\Diam(G)=L$, and an initial profile $f_0:V\to [0,1]$, such that 
  \begin{equation}\label{eq:liplower}
      \tau_{\infty}(1/2) \geq cN\cdot \Diam^2(G) \, 
  \end{equation}
   for any sequence of update vertices, where $c>0$ is an absolute constant.
\end{thm}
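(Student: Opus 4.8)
The plan is to prove the upper bound \eqref{eq:lipbound} and the lower bound \eqref{eq:liplower} by separate arguments, reducing the former to a per-sweep oscillation-contraction estimate and the latter to a diffusive estimate on a ``path of cliques''. For part (a) I would first record the maximum principle: since the update \eqref{eq:liplearning} replaces $f_{t-1}(v_t)$ by a value lying between $\min_{w\sim v_t}f_{t-1}(w)$ and $\max_{w\sim v_t}f_{t-1}(w)$, the global maximum $M_t:=\max_v f_t(v)$ is nonincreasing and the global minimum nondecreasing, so $\osc(f_t)$ is nonincreasing \emph{deterministically}. The bound then rests on the following \emph{sweep-contraction} claim: if one starts from an arbitrary profile $g$ and updates every vertex at least once, in any order, the resulting profile $g'$ satisfies $\osc(g')\le\bigl(1-c(\Diam(G)+1)^{-2}\bigr)\osc(g)$ for an absolute $c>0$. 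Granting this, I would partition the time axis into blocks in each of which every vertex is updated at least once; by coupon collecting, each block has expected length at most $n(\log n+1)$, and after $O\bigl((\Diam(G)+1)^2\log(2/\epsilon)\bigr)$ blocks the oscillation drops below $\epsilon$. Multiplying the two factors reproduces exactly the shape of \eqref{eq:lipbound}, with the $n(\log n+1)$ coming from the coupon-collector estimate $nH_n$ and the $(\Diam(G)+1)^2\log\frac{2}{\epsilon}$ from the number of blocks.

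The heart of the matter, and the step I expect to be hardest, is the sweep-contraction claim with the \emph{quadratic} dependence on the diameter. A naive ``erosion from the boundary'' argument, propagating the inequality $\min_{w\sim x}g(w)\ge M-2\eta$ outward from a near-maximal vertex $x$ to a minimal vertex, only yields the weaker factor $1-c/\Diam(G)$; it fails because the propagated lower bounds hold at different times during the sweep, and the true extremal configuration is the slowest ``cosine'' mode, for which the maximum drops by only $\Theta(\osc/\Diam^2)$ per sweep. Because the midrange operator is nonlinear, there is no spectral shortcut. The route I would pursue is a martingale/tug-of-war representation: express $f_t(v)$ as the value of a discrete game whose position performs a fair $\pm$ step driven by the current gradient, so that the relevant time scale is the hitting time of a reference set, which is $\Theta(\Diam^2)$ for a diffusive walk; alternatively, one can try a distance-weighted quadratic potential and show it dissipates at rate $\Omega(\Diam^{-2})$ per sweep. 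Establishing the sharp $\Diam^2$ (rather than $\Diam$) contraction uniformly over update orders is the principal obstacle.

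For the lower bound (b) I would take the graph to be a \emph{thick path}: cliques $C_0,\dots,C_L$ of common size $s\approx N/(L{+}1)$ with $C_i$ fully joined to $C_{i+1}$, so that $|V|\le N$ and $\Diam(G)=L$ (after trivial padding). Choose $f_0$ to be a monotone ramp along the path, constant on each clique with values $g_i^0$ increasing in $i$ and $\osc(f_0)=1$. The key reduction is that, for a monotone-across-cliques and internally near-uniform profile, updating a vertex $v\in C_i$ with $g_{i-1}<g_i<g_{i+1}$ replaces its value by $\tfrac12(g_{i-1}+g_{i+1})$, which is exactly a single-site \emph{averaging} step; the end cliques act as reflecting boundaries. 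Monotonicity is preserved by averaging, the within-clique spread stays $O(1/L)$, and consequently the clique averages $\bar g_i=\tfrac1s\sum_{v\in C_i}f(v)$ evolve, to leading order, by the single-site heat dynamics $\bar g_i\mapsto\bar g_i+\tfrac{1}{2s}(\bar g_{i-1}-2\bar g_i+\bar g_{i+1})$, since one vertex update moves $\bar g_i$ by only a $1/s$ fraction of the discrete Laplacian. This $1/s\approx L/N$ granularity is precisely what will couple the reservoir factor $N$ to the diffusion.

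To conclude I would track the slow-mode functional $Q=\sum_i\psi_i\,\bar g_i$, where $\psi$ is the lowest nonconstant Neumann eigenvector of the path Laplacian, normalized by $\|\psi\|_2=1$, with eigenvalue $\lambda\asymp L^{-2}$ and $\max_i\psi_i^2\asymp L^{-1}$. In the slow-mode regime $(\bar g_{i-1}-2\bar g_i+\bar g_{i+1})\approx-\lambda\bar g_i$, a single update changes $Q$ by at most $\tfrac{1}{2s}\lambda\,\psi_{i}^2\,|Q|\asymp (sL^3)^{-1}|Q|$, whence halving $|Q|$ requires $T\gtrsim sL^2\cdot L\asymp N L^2$ updates, for \emph{any} update sequence; since $\osc(f_t)\le 1/2$ forces $|Q|$ to drop by a constant factor, this yields \eqref{eq:liplower}. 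The main obstacle here is controlling the initial transient and the adversary: the multiplicative per-step bound is valid in the slow-mode regime, so I must show that the fast modes decay (or are absent, by taking $f_0$ an affine image of $\psi$) and, crucially, that an adversary cannot gain by breaking within-clique uniformity or destroying monotonicity. I expect this to follow from a comparison/maximum-principle argument keeping the configuration ordered together with an energy-dissipation bound certifying that the excess non-slow-mode content can only help the lower bound; making this robust against arbitrary update orders is the delicate point.
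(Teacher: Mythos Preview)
Your block decomposition for part~(a) matches the paper's: reduce to a contraction-per-sweep estimate, then use coupon collecting. However, the sweep contraction you flag as the principal obstacle is exactly where your plan and the paper diverge, and neither of your suggested routes (tug-of-war representation, quadratic distance-weighted potential) is what the paper does. The paper's device is to track not the oscillation but a \emph{concave modulus of continuity} $\omega_k:\{0,\dots,L\}\to[0,\infty)$ for $f_{T_k}$, starting with $\omega_0={\bf 1}_{\{j>0\}}$. The key claim (one line, via the update rule) is that if $\omega$ is a modulus for $f$ and $v$ is updated, then $|f^v(u)-f^v(v)|\le\tfrac12\bigl(\omega(d(u,v)-1)+\omega(d(u,v)+1)\bigr)$; concavity propagates, so after a full sweep $\omega_{k+1}(j):=\tfrac12(\omega_k(j-1)+\omega_k(j+1))$ is a modulus for $f_{T_{k+1}}$. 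Now one tracks $S_k:=\sum_{j=0}^L\omega_k(j)$: concavity gives $S_k\le\tfrac{L(L+1)}{2}\omega_k(1)$, while the recursion gives $S_{k+1}\le S_k-\tfrac12\omega_k(1)$, whence $S_{k+1}\le(1-\tfrac{1}{L(L+1)})S_k$ and $\osc(f_{T_k})\le\omega_k(L)\le\tfrac{2}{L}S_k$. This is where the $\Diam^{-2}$ rate comes from; it falls out of the discrete heat-equation recursion on $\omega_k$ without any spectral or game-theoretic input.

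For part~(b), your thick-path construction is different from the paper's and inherits exactly the difficulties you anticipate. The paper instead takes $k$ \emph{parallel paths} of length $L$ joining two hubs (so $n=k(L-1)+2$, $\Diam=L$). The point is that interior vertices on each path have degree~$2$, so on any single path the Lipschitz update \emph{is} ordinary averaging. By pigeonhole, for any update sequence and any $t$, some path $\gamma$ received at most $t/k$ updates among its interior vertices. The known lower bound for averaging on a segment (via the fragmentation process: $t\ge\theta^2 r^3/2$ for mass $\theta$ to escape a ball of radius $r$) then gives $f_t(\gamma_{L/4})<1/4$ and $f_t(\gamma_{3L/4})>3/4$ whenever $t/k< cL^3$, i.e., $t<ckL^3\asymp cNL^2$. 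This construction sidesteps entirely the within-clique non-uniformity and adversarial-order issues that make your slow-mode argument delicate: because the paths are disjoint, the pigeonhole step is deterministic, and because the interior degrees are~$2$, no reduction from midrange to averaging is needed.
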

\begin{prop}\label{prop:syst}
    If the update vertices $\{v_t\}$ are chosen in  round robin fashion, that is, we update opinions at vertices selected according to a fixed cyclic
permutation of $V$, then the upper bound in \eqref{eq:lipbound} can be improved to
\begin{equation}\label{eq:lipsequpper}
         \tau_{\infty}(\epsilon) \leq n(\Diam(G)+1)^2 \log \frac{2}{\epsilon}\, . 
\end{equation}
\end{prop}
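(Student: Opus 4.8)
The plan is to read Proposition~\ref{prop:syst} as a deterministic sharpening of Theorem~\ref{th:linfty no boundary simplified}(a) in which the improvement by a factor of $\log n+1$ comes \emph{entirely} from replacing a coupon-collector estimate by the exact coverage property of round-robin scheduling. The key observation is that under a fixed cyclic permutation of $V$, \emph{any} window of $n$ consecutive update steps touches each vertex exactly once. By contrast, in the i.i.d.\ uniform case the number of steps needed to update every vertex at least once is a coupon-collector time with expectation $nH_n\le n(\log n+1)$, and this is precisely the origin of the extra logarithmic factor in \eqref{eq:lipbound}.

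First I would isolate, from the proof of Theorem~\ref{th:linfty no boundary simplified}(a), its purely deterministic core: a contraction statement of the following shape. If $0=s_0<s_1<s_2<\cdots$ are times such that in each interval $(s_{j-1},s_j]$ every vertex of $V$ is updated at least once, then $\osc(f_t)$ decays geometrically, losing a constant fraction over every $(\Diam(G)+1)^2$ consecutive coverage windows, so that $\osc(f_{s_K})\le \epsilon\,\osc(f_0)$ once $K\ge (\Diam(G)+1)^2\log\frac{2}{\epsilon}$. This step uses only that each update places $f(v)$ between the minimal and maximal neighbouring values, whence $\osc(f_t)$ is nonincreasing, together with the spreading/diffusive estimate responsible for the $(\Diam(G)+1)^2$ factor; crucially, it makes no reference to the order in which vertices are visited inside a window, nor to any randomness.

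Then I would apply this with $s_j:=jn$. Since round robin covers $V$ in each block of $n$ consecutive steps, every interval $(s_{j-1},s_j]=(\,(j-1)n,\,jn\,]$ updates each vertex (in fact exactly once), so the hypothesis holds deterministically. Taking $K=\lceil(\Diam(G)+1)^2\log\frac{2}{\epsilon}\rceil$ and using $\osc(f_0)\le 1$ gives $\osc(f_{Kn})\le\epsilon$, that is $\tau_{\infty}(\epsilon)\le Kn\le n(\Diam(G)+1)^2\log\frac{2}{\epsilon}$, as claimed; because the bound is now deterministic, no expectation is needed.

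The only real obstacle is the first step: one must check that the argument behind Theorem~\ref{th:linfty no boundary simplified}(a) can be reorganized so that the factor $n(\log n+1)$ appears solely as the expected length of a coverage window, with the remaining geometric-decay and diameter-squared estimates being deterministic and independent of the intra-window update order. If the original proof instead interleaves the coverage and contraction arguments probabilistically, I would first rewrite it in this two-layer form—deterministic per-window contraction on top of a coverage bound—after which both the random statement \eqref{eq:lipbound} and the round-robin statement \eqref{eq:lipsequpper} follow by substituting the respective window lengths $nH_n$ and $n$.
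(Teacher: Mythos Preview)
Your proposal is correct and matches the paper's approach exactly. The paper first proves a deterministic statement (Theorem~\ref{thm:liplearn}) giving $\osc(f_t)\le 2\exp\bigl(-k/(\Diam^2(G)+\Diam(G))\bigr)$ for $t\ge T_k$, where $T_k$ is the $k$-th coverage time; both Theorem~\ref{th:linfty no boundary simplified}(a) and Proposition~\ref{prop:syst} are then immediate corollaries obtained by substituting $\E[T_k]\le kn(\log n+1)$ and $T_k=kn$, respectively---so the obstacle you worried about does not arise.
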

\begin{figure}[h]
    \centering
\begin{tikzpicture}[scale = 1.2]
    \node[circle, fill, inner sep=1pt, label={left:$0$}] (A) at (-3,0) {};
    \node[circle, fill, inner sep=1pt, label={right:$1$}] (B) at (3,0) {};
    \node[draw=none, fill=none] (C) at (0,0.1) {$\vdots$};

    \foreach \i in {-3,-2,-1,1,2,3} {
        \draw[marks,nobreak,middots] (A) to[out=18*\i, in=180-18*\i] (B);
    }
\end{tikzpicture}
\caption{A graph consisting of $k$ parallel paths of length $L$ between two nodes. This graph, with initial profile 0 on vertices closer to the left node and 1 on vertices closer to the right node, yields the lower bound in Theorem \ref{th:linfty no boundary simplified}.}
    \label{fig:lower necklace}
\end{figure}
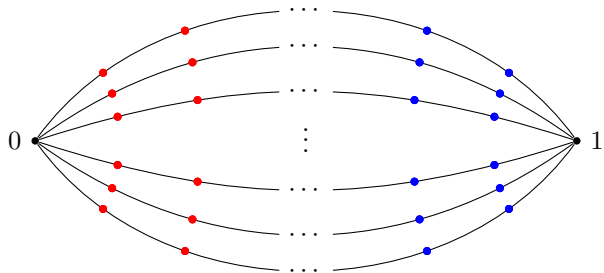

The upper bounds \eqref{eq:lipbound} and \eqref{eq:lipsequpper}
are proved in Section \ref{s:linfty no boundary}. The lower bound \eqref{eq:liplower} is shown in Section \ref{s:lowerbounds}; the relevant graph is in Figure \ref{fig:lower necklace}.


\subsection{Lipschitz learning with prescribed boundary values.}\label{sub:boundary}
Consider a finite connected graph $G=(V\cup B,E)$ where $B\neq \varnothing$ denotes the \textit{boundary} vertices and $V\neq \varnothing$ the \textit{interior} vertices. We assume that there are no edges between   vertices in $B$.  In this variation, given an initial profile $f_0:V\cup B\to [0,1]$, the sequence $\{f_t\}$ evolves according to \eqref{eq:liplearning}, where at each step $t$ the vertex $v_t$ is chosen uniformly from $V$. Thus, opinions at boundary vertices are never updated.

\begin{defn}
    We say that $h:V \cup B \to \R$ is {\bf infinity harmonic} on $V$ if \begin{equation}\label{eq:infinity harm}
        h(v)= \frac{\max_{u\sim v}h(u) + \min_{w \sim v}h(w)}{2}  \quad \, \forall v\in V \, . \end{equation}
\end{defn}
The existence of infinity harmonic extensions was first proved by Lazarus, Loeb, Propp, Stromquist and Ullman \cite{lazarus1999combinatorial} (See Proposition \ref{pr:infinity harmonic2}):
\begin{fact}\label{fa:infinity harmonic}
    Given a finite connected  graph $G=(V\cup B,E)$ and initial values $f_0$,  
 there is a unique extension $h:V\cup B \rightarrow \R$ of $f_0|_B$ that is infinity harmonic on $V$.
\end{fact}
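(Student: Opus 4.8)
The plan is to treat existence and uniqueness separately. For existence I would read the infinity harmonic equation \eqref{eq:infinity harm} as a fixed point. Define the operator $T$ on functions $h\colon V\cup B\to\R$ that agree with $f_0$ on $B$ by
\[
(Th)(v)=\tfrac12\Bigl(\max_{u\sim v}h(u)+\min_{w\sim v}h(w)\Bigr)\ \text{ for } v\in V,\qquad (Th)(v)=f_0(v)\ \text{ for } v\in B,
\]
so that a fixed point of $T$ is exactly an infinity harmonic extension of $f_0|_B$. The operator $T$ is monotone (if $h\le g$ pointwise then $Th\le Tg$) and continuous, and it leaves invariant the set of $h$ with values in $[\min_B f_0,\max_B f_0]$. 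Starting from $h^0$ equal to $\min_B f_0$ on $V$ and to $f_0$ on $B$, one checks directly that $h^0$ is a subsolution, i.e. $h^0\le Th^0$; monotonicity then forces $T^kh^0$ to increase in $k$, while the invariant interval keeps the iterates bounded. Hence $T^kh^0$ converges pointwise to some $h^*$, and continuity of $T$ gives $Th^*=h^*$. This $h^*$ is the desired infinity harmonic extension, and it matches $f_0$ on $B$ because $T$ fixes the boundary values at every step.

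For uniqueness I would prove a comparison principle: if $h_1$ satisfies $\max_{u\sim v}h_1(u)+\min_{w\sim v}h_1(w)\ge 2h_1(v)$ (a subsolution) and $h_2$ satisfies the reverse inequality (a supersolution) at every $v\in V$, with $h_1\le h_2$ on $B$, then $h_1\le h_2$ on all of $V\cup B$; applying this to two infinity harmonic functions in both directions then yields $h_1=h_2$. Suppose $M:=\max_{V\cup B}(h_1-h_2)>0$ and set $S=\{x\suchthat (h_1-h_2)(x)=M\}$; since $h_1\le h_2$ on $B$ we have $S\sub V$. Writing the sub/supersolution inequalities at any $v\in S$ and subtracting, the quantities $\max_{u\sim v}h_1(u)-\max_{u\sim v}h_2(u)$ and $\min_{u\sim v}h_1(u)-\min_{u\sim v}h_2(u)$ are each at most $M$ yet sum to at least $2M$, so both equal $M$. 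Chasing the minimizing neighbour then shows that the neighbour $b$ realizing $\min_{u\sim v}h_2(u)$ lies in $S$ with $h_2(b)\le h_2(v)$, and symmetrically the maximizing neighbour for $h_1$ lies in $S$.

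The \emph{crux} is turning this local propagation into a global contradiction, and this is the step I expect to be the main obstacle: because $T$ is nonlinear, the difference $h_1-h_2$ solves no equation, so no linear maximum principle applies. I would look at a vertex $v^\#\in S$ minimizing $h_1$ over $S$; since $h_1=h_2+M$ on $S$ this $v^\#$ also minimizes $h_2$ over $S$, and that double extremality is what makes the argument work. The propagation forces $\min_{u\sim v^\#}h_2(u)=h_2(v^\#)$, the supersolution inequality then makes $h_2$ constant on $\{v^\#\}\cup N(v^\#)$, and combining this with $(h_1-h_2)(u)\le M$ shows every neighbour $u$ of $v^\#$ satisfies $h_1(u)=h_1(v^\#)$ and $h_2(u)=h_2(v^\#)$; hence $u\in S$ and $u$ again minimizes $h_1$ over $S$. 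Thus the set of minimizers of $h_1$ over $S$ is closed under taking neighbours, so by connectivity of $G$ it is all of $V\cup B$, contradicting $S\sub V$ together with $B\neq\varnothing$. This forces $M\le 0$ and completes the comparison principle, hence uniqueness.
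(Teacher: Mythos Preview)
Your proof is correct and complete, but it takes a genuinely different route from the paper's.

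\textbf{What you do.} For existence you run a monotone (Perron-type) iteration: starting from the constant subsolution $h^0\equiv\min_B f_0$ on $V$, the iterates $T^kh^0$ increase to a fixed point of $T$. For uniqueness you prove a full comparison principle for sub/supersolutions via a ``double extremality'' argument at the minimizer of $h_2$ over the contact set $S$. One small wording issue: when you write ``the propagation forces $\min_{u\sim v^\#}h_2(u)=h_2(v^\#)$'', what you actually get from $b\in S$ and the minimality of $v^\#$ is the inequality $\min_{u\sim v^\#}h_2(u)\ge h_2(v^\#)$; the equality (and constancy of $h_2$ on $N(v^\#)$) then follows only after you invoke the supersolution inequality, as you say in the next clause. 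The logic is fine, just phrase it as $\ge$ first.

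\textbf{What the paper does.} The paper (Proposition~\ref{pr:infinity harmonic2}) argues constructively by induction on $|V|$ using \emph{bridges of maximal slope}. It first proves a gradient lemma (Lemma~\ref{le:bridge slope}): any edge gradient of an infinity harmonic $h$ is dominated by the slope of some bridge for $h|_B$. This forces $h$ to be \emph{linear} along any maximal-slope bridge, which pins down its values there; moving those vertices to the boundary reduces $|V|$ and closes the induction for both existence and uniqueness.

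\textbf{Trade-offs.} Your approach is the cleaner, PDE-flavoured route to the bare statement of Fact~\ref{fa:infinity harmonic}, and it yields a comparison principle for free. The paper's approach is more combinatorial but buys strictly more: it gives an explicit algorithm and, crucially, the structural conclusion that the infinity harmonic extension is linear on every maximal-slope bridge. That extra structure is not a by-product of your argument, and the paper relies on it later (e.g., in the proof of Lemma~\ref{lem:keylip}).
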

 Efficiently finding this extension is the object of much study, see Section \ref{s:background}. It is well known and easy to see that under the Lipschitz learning dynamics, 
\begin{equation}\label{eq:infinity harmonic limit}
    \lim_{t\to \infty} f_t(v) = h(v) \quad \, \forall v\in V \, ,
\end{equation} where $h$ is the extension above. We will recall the proof of \eqref{eq:infinity harmonic limit} in Section \ref{sub:basic}. 

The $\epsilon$-approximation time is defined by:
\begin{equation}
    \tau^*(\epsilon):= \min\{t\geq 0 :\, \|f_t -h\|_\infty \leq \epsilon\}\, .
\end{equation}




The next theorem gives a tight polynomial bound on the mean of $\tau^*(\epsilon)$.
\begin{thm}\label{th:l infty with boundary simplified}
Let $G=(V\cup B,E)$ be a connected graph with boundary vertices $B$ and $|V|=n$.  Given an initial profile $f_0:V\cup B \rightarrow [0,1]$, let $h$ denote the infinity harmonic extension of $f_0$ from $B$ to $V$, and run the Lipschitz learning dynamics \eqref{eq:liplearning} with boundary values $f_0|_B$, as described in the beginning of this subsection. Then, 
\begin{itemize}
     \item[\bf{(a)}] the $\ell^1$ norm of the difference  $f_t-h$ satisfies 
\begin{equation}\label{eq:l1decrease} 
     \E[\|f_t- h\|_1]\leq n \,    e^{-t/(2n^3)} \,;
\end{equation}
\item[\bf{(b)}] the $\epsilon$-approximation time satisfies
\begin{equation}
    \E[\tau^*(\epsilon)]\leq 1+2n^3\log \frac{ne}{\epsilon} \,.
\end{equation}
\end{itemize}
Conversely, there is an absolute constant $c>0$ such that for every $n\geq 2$, on the segment of length $n$ with boundary values 1 at the endpoints and initial profile $0$ in the interior, we have
\begin{equation}
    \tau^*(1/2) \geq c n^3 \,,
\end{equation}
for any sequence of update vertices in $\{1,\ldots,n-1\}$.
\end{thm}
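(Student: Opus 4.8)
The plan is to record two deterministic structural facts, prove the quantitative $\ell^1$ decay (a), deduce (b) from it by a Markov-plus-summation argument, and finally treat the converse by a duality (adjoint random walk) argument.

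\textbf{Two reductions.} Write $g_t=f_t-h$, and note $g_t|_B\equiv 0$ since boundary values are frozen and $h|_B=f_0|_B$. Using that $h$ is infinity harmonic one verifies, exactly as in the proof of \eqref{eq:infinity harmonic limit}, that $M_t:=\max_v g_t(v)$ is non-increasing and $m_t:=\min_v g_t(v)$ is non-decreasing: if $v$ is updated, then $g_{t+1}(v)=\tfrac12(\max_{u\sim v}f_t(u)+\min_{w\sim v}f_t(w))-\tfrac12(\max_{u\sim v}h(u)+\min_{w\sim v}h(w))\le M_t$, and symmetrically $\ge m_t$. Hence $\|f_t-h\|_\infty=\max(M_t,-m_t)$ is non-increasing \emph{deterministically}, so $\{\tau^*(\epsilon)>t\}=\{\|f_t-h\|_\infty>\epsilon\}$. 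Secondly, since the update is monotone, coupling the same update sequence to the initial data $\max(f_0,h)$ and $\min(f_0,h)$ sandwiches $f_t$ between two one-sided evolutions (one with $f_t\ge h$, one with $f_t\le h$), and $\|f_t-h\|_1$ is bounded by the sum of their distances to $h$, a sum that equals $\|f_0-h\|_1\le n$ at time $0$. Thus it suffices to prove (a) for a one-sided profile, say $\tilde g_t:=h-f_t\ge 0$, with $\tilde g_t|_B=0$ and $\|\tilde g_0\|_1\le n$.

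\textbf{Core of (a).} For a one-sided $\tilde g_t\ge 0$, the key step is a \emph{linearization of the nonlinear update from above}: if $v$ is updated, then
$\tilde g_{t+1}(v)\le\tfrac12\tilde g_t(p(v))+\tfrac12\tilde g_t(w^*)$,
where $p(v)\in N(v)$ attains $\max_{u\sim v}h(u)$ (a neighbour fixed once and for all) and $w^*\in N(v)$ attains $\min_{w\sim v}f_t(w)$. Indeed $\max_{u\sim v}f_t(u)\ge f_t(p(v))$ gives $\tilde g_{t+1}(v)\le h(v)-\tfrac12 f_t(p(v))-\tfrac12 f_t(w^*)$, and $h(v)=\tfrac12(\max_{u\sim v}h(u)+\min_{w\sim v}h(w))\le\tfrac12(h(p(v))+h(w^*))$ converts this into the claimed bound. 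Since following $p(\cdot)$ strictly increases $h$ and the maximum of $h$ is attained on $B$, this exhibits $\tilde g_t$ as dominated by an absorbing averaging process drifting toward $\{\tilde g=0\}\supseteq B$. I would then show $\E[\|\tilde g_t\|_1]\le n\,\rho^t$ with $\rho\le 1-\tfrac1{2n^3}$, the exponent being the product of the $1/n$ probability of selecting a given vertex with an absorption/spectral-gap estimate of order $1/n^2$ for the dominating process, valid on every connected graph with nonempty boundary. On the segment the update is \emph{exactly} linear averaging, $\tilde g_{t+1}(v)=\tfrac12(\tilde g_t(v-1)+\tilde g_t(v+1))$, so there $\E[\tilde g_t]=(I-\tfrac1{2n}\Delta)^t\tilde g_0$ with $\Delta$ the Dirichlet Laplacian of the path; pairing with $\mathbf 1$ and using $\|\tilde g_0\|_2\le\sqrt n$ gives $\E\|\tilde g_t\|_1\le n(1-\tfrac{\lambda_1}{2n})^t$, and the smallest Dirichlet eigenvalue $\lambda_1\ge 1/n^2$ yields the sharp constant. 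This computation both pins down the exponent $1/(2n^3)$ and guides the general estimate. With (a) in hand, (b) follows: by the deterministic monotonicity, $\Pr[\tau^*(\epsilon)>t]=\Pr[\|f_t-h\|_\infty>\epsilon]\le\min\bigl(1,\tfrac1\epsilon\E\|f_t-h\|_1\bigr)\le\min\bigl(1,\tfrac n\epsilon e^{-t/(2n^3)}\bigr)$, and summing $\E[\tau^*(\epsilon)]=\sum_{t\ge 0}\Pr[\tau^*(\epsilon)>t]$, split at $t_0=\lceil 2n^3\log(n/\epsilon)\rceil$, produces the bound $1+2n^3\log(ne/\epsilon)$.

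\textbf{Lower bound (converse).} On the segment the dynamics are linear, so for any fixed update order $f_t=P_{v_t}\cdots P_{v_1}f_0$ with $P_v$ the single-site averaging operator. Taking adjoints (each $P_v$ is self-adjoint) represents the midpoint value as $f_t(c)=\langle\mu_t,f_0\rangle$, where $\mu_t$ is the law of a \emph{backward particle}: it starts at $c$, reads the update list in reverse, jumps to a uniformly random neighbour each time the currently processed vertex equals its position, and freezes upon reaching $B$. As $f_0\equiv 1$ on $B$ and $\equiv 0$ in the interior, $f_t(c)$ is exactly the probability that this particle is absorbed at $B$. Absorption requires the underlying simple random walk to travel distance $\sim n/2$, hence $\Omega(n^2)$ actual moves; but the particle moves only when the fixed list presents its (random, diffusely spread) current location. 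A local central limit estimate then bounds the moves: after $k$ moves the walk's position has maximal atom $O(1/\sqrt k)$, so each additional move costs $\Omega(\sqrt k)$ reverse-steps, and accumulating $m$ moves costs $\Omega(m^{3/2})$ steps. Taking $m\asymp n^2$ shows that for $t<cn^3$ the absorption probability stays below $1/2$, i.e. $\tau^*(1/2)\ge cn^3$ for \emph{every} update order.

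\textbf{Main obstacle.} The delicate points are twofold. For the upper bound, one must turn the pointwise max--min linearization into a genuinely linear absorbing comparison with the correct $O(n^2)$ relaxation on \emph{every} graph: the naive pointwise bound can over-count mass (so $\|\tilde g_t\|_1$ need not drop at a single step, as a bump far from $B$ momentarily spreads without net loss), which forces one to extract the geometric decay globally along the trajectory rather than step by step, using the drift toward $B$ supplied by the $h$-maximal neighbour $p(\cdot)$. For the lower bound, the crux is making the ``number of moves $\asymp t^{2/3}$'' estimate rigorous, i.e. controlling the coincidences between the diffusing particle and the fixed adversarial list through local CLT / occupation-time bounds for the absorbed walk.
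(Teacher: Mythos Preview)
Your proposal for part (a) has a real gap that you correctly identify in the ``Main obstacle'' paragraph but do not resolve. The pointwise linearization $\tilde g_{t+1}(v)\le\tfrac12\tilde g_t(p(v))+\tfrac12\tilde g_t(w^*)$ is correct, but the vertex $w^*$ depends on $f_t$, so there is no fixed absorbing chain to compare with, and the inequality can \emph{increase} $\|\tilde g\|_1$ at a step (mass is duplicated onto two neighbours rather than moved). You assert a global $1/n^2$ relaxation for the ``dominating process'' on every connected graph with nonempty boundary, but no such process has been defined, and no mechanism is given for extracting geometric decay from the drift along $p(\cdot)$. This is not a detail to be filled in; it is the entire content of the theorem.

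The paper's route is different and avoids this obstacle entirely. It reduces not to $\max(f_0,h)$ but to the extremal profile $f_0\equiv 1$ on $V$, which is \emph{infinity superharmonic}; this property is preserved by the dynamics. For such $f$ the $\ell^1$ decrease at an update is an \emph{identity}: $\|f-h\|_1-\|f^v-h\|_1=\tfrac12|\Delta_\infty f(v)|$. Averaging over $v$ gives an expected decrease of $\tfrac1{2n}\|\Delta_\infty f\|_1$. The substantive step is then a geometric lemma: for infinity superharmonic $f$ agreeing with $h$ on $B$, one has $\|f-h\|_1\le n^2\|\Delta_\infty f\|_1$. This is proved by induction on $|V|$, peeling off a bridge of maximal slope for $h$, building a greedy path for $f$ off an edge of large $\nabla(f-h)$, and using that gradients along greedy paths decrease by at most $\|\Delta_\infty f\|_1$. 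The identity plus the lemma give the one-step contraction $\E\|f_{s+1}-h\|_1\le(1-\tfrac1{2n^3})\|f_s-h\|_1$ directly.

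Your derivation of (b) from (a) by Markov and tail-summing is the same as the paper's. For the lower bound, your dual-particle picture is exactly the paper's fragmentation process, but your local-CLT heuristic (each move costs $\Omega(\sqrt k)$ steps) is fragile against an adversarial update list and you again flag it as unresolved. The paper instead uses a clean two-line potential argument (attributed to Paterson et al.): with $Q(\mu)=\sum i^2\mu(i)$ and $\mathcal E(\mu)=\sum|i-j|\mu(i)\mu(j)$, a split at $w_k$ increases $Q$ by $h_k=\mu_k(w_k)$ and $\mathcal E$ by $h_k^2$, and Cauchy--Schwarz on $(\sum h_k)^2\le t\sum h_k^2$ immediately gives $t\ge\theta^2 r^3/2$ whenever mass $\theta$ escapes a ball of radius $r$. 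This delivers the $cn^3$ bound for every update sequence with no probabilistic estimates needed.
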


\begin{prop}\label{prop:round robin lip boundary}
    In the setting of Theorem \ref{th:l infty with boundary simplified}, if the update vertices $\{v_t\}$ are chosen in round robin fashion, then \eqref{eq:l1decrease} can be replaced by 
    \begin{equation*}
     \|f_{t}- h\|_1\leq n \exp\Bigl(\frac{-\lfloor t/n\rfloor}{2n^2}\Bigr) \,.
\end{equation*} 
Thus in this case,
\begin{equation*}
    \tau^*(\epsilon) \leq n+2n^3\log \frac{n}{\epsilon} \,.
\end{equation*}
\end{prop}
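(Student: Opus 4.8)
The plan is to re-run the proof of Theorem~\ref{th:l infty with boundary simplified}(a) with the random update schedule replaced by the cyclic one, and to isolate the single place where the uniform choice of $v_t$ enters. Write $g_t := f_t - h$. Since boundary values are never updated and $h|_B = f_0|_B$, we have $g_t \equiv 0$ on $B$, and since $f_0,h \in [0,1]$ we have $\|g_0\|_1 \le |V| = n$. By Fact~\ref{fa:infinity harmonic} and \eqref{eq:infinity harmonic limit}, $h$ is a fixed point of each single-vertex update rule \eqref{eq:liplearning}: updating any interior vertex leaves $h$ unchanged. Moreover the update map at a fixed vertex is monotone and commutes with adding a constant to $f$, hence is non-expanding in $\ell^\infty$; this is the maximum-principle structure underlying the theorem's proof.

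Next I would extract from that proof the deterministic, per-vertex progress statement behind the per-step expected contraction $\E[\,\cdot \mid \mathcal F_{t-1}]$ of the quantity controlling $\|f_t-h\|_1$. In the random case the rate $\tfrac{1}{2n^3}$ factors as $\tfrac1n\cdot\tfrac{1}{2n^2}$, where $\tfrac1n$ is precisely the probability that the uniformly chosen $v_t$ is a vertex at which progress is made, and $\tfrac{1}{2n^2}$ is the contraction obtained once all the relevant vertices have been updated. Under round robin, every interior vertex is updated exactly once in each block of $n$ consecutive steps, so the $\tfrac1n$ loss disappears, and I would instead prove the deterministic per-sweep contraction
\begin{equation*}
    \|g_{t+n}\|_1 \le \Bigl(1-\tfrac{1}{2n^2}\Bigr)\,\|g_t\|_1
\end{equation*}
for any sweep, i.e. any ordering that updates all of $V$ once.

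The hard part will be that round robin is sequential: each update uses the already-updated neighbour values, so I cannot simply sum the per-vertex decreases computed from the frozen configuration $g_t$, and indeed a single update can momentarily increase $\|g\|_1$. I would resolve this exactly as in the theorem, working with the same monotone potential (a weighted $\ell^1$-type quantity that, unlike $\|\cdot\|_1$ itself, is non-increasing under every individual update by the maximum principle) and arguing via a monotone coupling that performing the updates sequentially contracts it at least as much as applying them to the frozen configuration: earlier updates in the sweep only move neighbour values closer to $h$, which cannot reduce the progress available to later updates. This comparison is the crux; the rest is bookkeeping.

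Finally I would assemble the bound. Telescoping the per-sweep contraction over the $k:=\lfloor t/n\rfloor$ completed sweeps and using $\|g_0\|_1\le n$ together with $1-x\le e^{-x}$ gives $\|f_t-h\|_1 \le n\,(1-\tfrac{1}{2n^2})^k \le n\exp\bigl(-\lfloor t/n\rfloor/(2n^2)\bigr)$, the claimed refinement of \eqref{eq:l1decrease}. For the approximation time, since $\|f_t-h\|_\infty \le \|f_t-h\|_1$ it suffices that $n\exp(-\lfloor t/n\rfloor/(2n^2))\le\epsilon$; solving yields $\lfloor t/n\rfloor \ge 2n^2\log(n/\epsilon)$, i.e. $t \ge 2n^3\log(n/\epsilon)$, and the floor costs at most one extra sweep, giving $\tau^*(\epsilon)\le n+2n^3\log\frac{n}{\epsilon}$.
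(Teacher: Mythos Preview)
Your overall plan---prove a deterministic per-sweep contraction $\|f_{t+n}-h\|_1 \le (1-\tfrac{1}{2n^2})\|f_t-h\|_1$ and telescope---is exactly right, and the final bookkeeping is correct. But the ``crux'' paragraph contains a genuine gap and a misreading of the paper's argument.

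You correctly worry that for a general $f_0$ a single update can increase $\|f_t-h\|_1$, and you propose to fix this with ``the same monotone potential (a weighted $\ell^1$-type quantity \dots)'' together with a monotone coupling. There is no such weighted potential in the proof of Theorem~\ref{th:l infty with boundary simplified}. What that proof actually does---and what you have missed---is a reduction to monotone envelopes: it suffices to treat the upper envelope $f_0\equiv 1$ on $V$ (and, by symmetry, the lower envelope), for which $f_0$ is infinity superharmonic and dominates $h$. This reduction is the step that makes everything deterministic and monotone: all subsequent $f_s$ remain infinity superharmonic, $f_{s+1}\le f_s$ pointwise, and hence $\|f_s-h\|_1$ is \emph{non-increasing at every single update}, with the exact decrement $f_s(v_{s+1})-f_{s+1}(v_{s+1})=\tfrac12|\Delta_\infty f_s(v_{s+1})|$ from \eqref{eq:linfty update v}.

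Once you have this reduction, the per-sweep inequality does not require a coupling argument either. The key elementary observation (replacing your vague ``earlier updates only help later ones'') is: if $w$ has not yet been updated in the current sweep $[t+1,s]$, then $f_s(w)=f_t(w)$ while $f_s(w^\pm)\le f_t(w^\pm)$, so $|\Delta_\infty f_s(w)|\ge |\Delta_\infty f_t(w)|$. Summing the per-step decrements over the sweep therefore gives
\[
\|f_t-h\|_1-\|f_{t+n}-h\|_1=\sum_{s=t}^{t+n-1}\tfrac12|\Delta_\infty f_s(v_{s+1})|\ \ge\ \tfrac12\|\Delta_\infty f_t\|_1\ \ge\ \tfrac{1}{2n^2}\|f_t-h\|_1,
\]
using Lemma~\ref{lem:keylip} for the last step. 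Your proposal would work if you replace the unspecified potential and coupling by this reduction and this inequality.
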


Theorem \ref{th:l infty with boundary simplified} and its extensions are proved in Section \ref{s:linftywithboundary}, while the relevant lower bounds are shown in Section \ref{s:lowerbounds}.
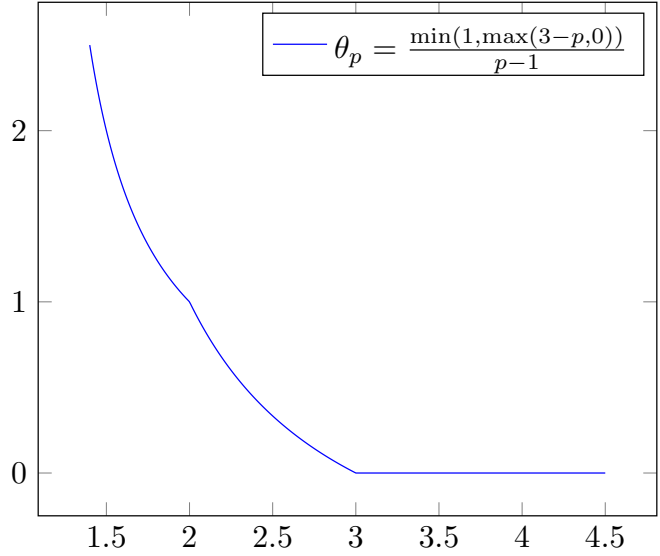
\begin{figure}[h]
    \centering
\begin{tikzpicture}[thick, scale = 1.2]
    \begin{axis}[domain=1.4:4.5,samples=400]
     legend style={cells={align=left}}
        \addplot+[mark=none] {min(1,max(0,3-x))/(x-1)};
         \addlegendentry{$\theta_p=\frac{{\rm min}(1,{\rm max}(3-p,0))}{p-1}$}
    \end{axis}
\end{tikzpicture}
    \caption{The exponent $\theta_p$ that governs the speedup of the convergence rate to consensus in  Theorem \ref{th:main lpbound}. Note that $\frac{d}{dp}\theta_p$ is discontinuous at $p=2$ and $p=3$, and that  $\frac{d}{dp}\theta_p\to -1/2$ as $p \uparrow 3.$}
    \label{fig:eta}
\end{figure} 

\subsection{Sharp bounds using the edge density}\label{s:detailed results}
Let $G=(V,E)$ be a connected graph with $|V|=n$ and 
 {\em average degree} $D_G:=\frac{1}{n}\sum_{v\in V}{\rm deg}(v) \, $.   Next we will state a more precise version of Theorem \ref{th:lp no boundary simplified} that uses the {\em edge density} $D_G/n=2|E|/n^2$. Recall that $\beta_p=\max\Bigl\{\frac{2p}{p-1},3\Bigr\}$
  and define
    \begin{equation}\label{eq:thetadef}
        \theta_p:=\frac{1}{p-1} \quad \text{\rm if} \quad  1<p \le 2 \quad \text{\rm and} \quad \theta_p:=\max\Bigl\{\frac{3-p}{p-1},0 \Bigr\} \quad \text{\rm if} \quad  p \ge 2 \,.
    \end{equation}  
    Consider the  function  
\begin{equation} \label{defF}
F(n,p,D):=n^{-\beta_p}(D/n)^{-\theta_p} =
 \begin{cases}
n^\frac{1-2p}{p-1}D^\frac{-1}{p-1} \, \ \, &1<p\leq 2 \, ,\\
n^{-3}D^\frac{p-3}{p-1} \, \ \, &2< p <3 \, ,\\
n^{-3} \, \ \, &p\geq 3 \, .    \end{cases}
\end{equation}

\begin{thm}\label{th:main lpbound}
    Fix $1<p<\infty$. There are constants   $c_p,C_p>0$ such that for all $n \ge 2$, for  every connected graph $G=(V,E)$ with $|V|=n$, and
    for  every initial profile $f_0:V \to [0,1]$, the dynamics \eqref{eq:pdef} satisfy
    \smallskip    
    \begin{itemize}
        \item[\bf{(a)}]
    $\E[\en_p(f_t)]\leq \en_p(f_0)\exp\bigl(-c_p F(n,p,D_G) t\bigr) \, $ for all $t>0$;
\medskip    
    \item[\bf{(b)}] 
    $ \displaystyle \E[\tau_p(\epsilon)]\leq \frac{C_p \log(n/ \epsilon)}{F(n,p,D_G)} 
    \;$ for all $\epsilon \in (0,1/2]$.
    \end{itemize}
    \medskip
Conversely, there exists  $\tilde{c}_p>0$ such that for   every  large $N$ and every $D\geq 2$, there are a connected graph $G=(V,E)$ with $|V|\leq N$ and $D_G \le D$, and an initial profile $f_0:V\rightarrow [0,1]$, such that 
\begin{equation}\label{eq:p D lower}
\tau_p(1/2) \ge 
\frac{\tilde{c}_p }{F(N,p,D)} \,.\textit{}
\end{equation}

\end{thm}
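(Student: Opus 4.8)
The plan is to deduce both upper bounds from a single one-step energy-dissipation estimate, and to obtain the matching lower bound from explicit extremal graphs --- the cycle when $p\ge 3$ and a dense barbell when $1<p<3$.

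\textbf{Reduction of the upper bounds.} For a profile $f$ and a vertex $v$ set $S_v(y):=\sum_{w\sim v}|f(w)-y|^p$ and let $\Delta_v(f):=S_v(f(v))-\min_yS_v(y)\ge 0$ be the local energy drop obtained by optimizing at $v$. Since updating $v$ alters only the edges incident to $v$, one has $\en_p(f_{t+1})=\en_p(f_t)-\Delta_{v_{t+1}}(f_t)$, so conditioning on $\mathcal F_t$ and averaging over the uniform choice of $v_{t+1}$ gives the exact identity $\E[\en_p(f_t)-\en_p(f_{t+1})\mid\mathcal F_t]=\tfrac1n\sum_{v}\Delta_v(f_t)$. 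Thus (a) reduces to the \emph{key dissipation estimate}
\[
\sum_{v\in V}\Delta_v(f)\ \ge\ c_p\,n\,F(n,p,D_G)\,\en_p(f),
\]
which yields $\E[\en_p(f_{t+1})\mid\mathcal F_t]\le(1-c_pF)\,\en_p(f_t)$ and, on iterating, (a). Part (b) then follows routinely: the Poincaré-type bound $\osc(f)^p\le n^{p-1}\en_p(f)$, obtained by applying Hölder's inequality to a shortest path joining an extremal pair, turns the exponential decay of $\E[\en_p]$ into decay of $\osc$, and a Markov-plus-restart argument supplies the factor $\log(n/\epsilon)/F$.

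\textbf{The key estimate (the crux).} The difficulty is that $\Delta_v$ \emph{cannot} be bounded below vertex-by-vertex by the incident energy: $\Delta_v$ records only the suboptimality of $f(v)$ relative to its $\ell^p$-barycenter $y_v^\ast$, and vanishes whenever $f(v)=y_v^\ast$, even if the edges at $v$ carry large energy. So the estimate is necessarily global. I would proceed in two steps. First, a \emph{local} lower bound $\Delta_v\gtrsim_p\Psi_p\big(\rho_v;\{f(w)\}_{w\sim v}\big)$, where $\rho_v:=|f(v)-y_v^\ast|$, obtained by integrating $S_v'$ from $y_v^\ast$ to $f(v)$ and exploiting the (degenerate) convexity of $t\mapsto|t|^p$; the two regimes already part here, because the growth of $S_v'$ near its zero is governed by whether the neighbour values are clustered or spread --- the extremal "one distant neighbour among $\deg v$ equal ones" configuration produces the dilution factor $\deg(v)^{-1/(p-1)}$ responsible for the speedup exponent $\theta_p$. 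Second, a \emph{summation/Poincaré} step bounding $\sum_v\Psi_p$ below by $F\cdot\en_p(f)$; quantitatively this is a statement that a profile on a connected graph whose vertices are all nearly at their barycenters must be nearly constant, and I would prove it through a coarea/isoperimetric estimate on the level sets $\{f>s\}$ in which the average degree $D_G$ enters. Matching these two steps so that the combined exponent is exactly $\beta_p$ with the correct $D$-speedup $\theta_p$ --- and in particular locating the transition at $p=3$, where the clique-draining cost $n^{2p/(p-1)}$ overtakes the interface-spreading cost $n^3$ --- is the main obstacle of the whole proof.

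\textbf{The lower bound.} Here I exhibit, for each regime, a graph meeting $|V|\le N$ and $D_G\le D$ on which consensus is slow for \emph{every} update sequence; order-preservation of the dynamics ($y_v^\ast\in[\min_{w\sim v}f(w),\max_{w\sim v}f(w)]$) keeps $\osc(f_t)$ non-increasing throughout. For $p\ge 3$ I take the cycle $C_N$ (so $D_G=2$ and $(D/N)^{\theta_p}=1$) started from a single $0/1$ step, and track a monovariant measuring the spread of the transition region; its per-update increment is controlled by the discrete $p$-Laplacian of a slowly-varying test profile and is $O(N^{-3})$ in the relevant scaling, while the monovariant must change by $\Omega(1)$ before $\osc$ can fall to $1/2$, giving $\tau_p(1/2)\gtrsim N^3$. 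For $1<p<3$ I use a barbell whose two end-blocks are cliques and whose connecting segment has length of order $N$, with the block size tuned so that $D_G\le D$; the slowness rests on the \emph{nonlinear, non-charge-conserving} draining law, namely that a block at level $c$ leaking through a single boundary vertex to an exterior value $s<c$ can be lowered per update by at most $\asymp(c-s)\,m^{-1/(p-1)}$ in the block size $m$, so that moving a block by $\Omega(1)$ --- together with maintaining the gradient across the bottleneck segment --- costs many steps. Optimizing the block size and segment length against the constraints $|V|\le N$, $D_G\le D$, and carrying out the bookkeeping for an \emph{arbitrary} (rather than random) update order, reproduces $\tau_p(1/2)\gtrsim N^{\beta_p}(D/N)^{\theta_p}=1/F(N,p,D)$, the second delicate point of the argument.
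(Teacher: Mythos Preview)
Your reduction of (a) to a one-step dissipation inequality and of (b) to (a) via Markov and a path-H\"older bound is correct and matches the paper. The difficulties you isolate --- the local drop and the global summation --- are also the right ones. But in each half of the proof your plan has a concrete gap.

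\textbf{Upper bound: the summation step is not just Poincar\'e.} You propose to bound $\sum_v\Psi_p$ from below by a coarea/isoperimetric argument on level sets. The paper does something quite different, and it matters. After ordering the vertices so that $g_1\ge\dots\ge g_n$, it works not with your $\rho_v=|f(v)-y_v^\ast|$ but with $\rho_i:=|R_i^+-R_i^-|$, where $R_i^\pm=\sum_{j\sim i,\,\pm(j-i)<0}|g_i-g_j|^{p-1}$, i.e.\ the derivative of the local energy at the current value. Normalizing to $\sum_i\rho_i=1$, a discrete Green identity $\sum_{i\le k}(R_i^--R_i^+)=\sum_{i\le k<j,\,i\sim j}\Delta_{ij}^{p-1}\le 1$ bounds all cut sums; summing in $k$ gives $\sum_{i<j,\,i\sim j}(j-i)\Delta_{ij}^{p}\le n$, hence $\en_p(g)\le n$. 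The second, harder, ingredient is Chebyshev's ``other'' inequality applied to the monotone factors $j-i$ and $\Delta_{ij}^{p-1}$, which yields $\sum_i d_i R_i\le 9n$; this inequality is what produces the speed-up exponent $\theta_p$ after a H\"older/Jensen step. Your coarea picture has no analogue of this step, and without it I do not see how you get the correct $D$-dependence or the transition at $p=3$. In particular, the local drop at $i$ depends on $\rho_i$ \emph{and} on $R_i$ when $p>2$ (the paper's $I_i=\rho_i^2 r_i^{2-p}\delta_i^{-1}$), and the global estimate has to trade $\sum R_i d_i$ against $\sum\rho_i$; a level-set argument would need to reinvent this.

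\textbf{Lower bound for $2<p<3$: the barbell is not enough.} Your drain estimate $|c-s|\,m^{-1/(p-1)}$ per update, pushed through a barbell with tunable clique size, reproduces only $\tau\gtrsim N^{(2p-1)/(p-1)}D^{1/(p-1)}$; the paper computes exactly this for its graph $H_{d,n}$ and notes that it is sharp only for $1<p\le2$ (and for $2<p\le3$ only at the extreme $D\asymp N$). For $2<p<3$ the target is $N^{3}D^{(3-p)/(p-1)}$, with $\theta_p=\tfrac{3-p}{p-1}$. The paper achieves this with a different graph, the \emph{accordion}: two chains of $2m+1$ anti-cliques of size $d$ (so $md=n$) joined by two long paths. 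The point is that an anti-clique chain behaves, at leading order, like a segment for the averaging dynamics (Lemma~\ref{le:two arithmetic progressions} shows that the $\ell^p$-barycenter of two arithmetic progressions with equal step is just the midpoint, independently of $p$), so one inherits the $m^3$ segment bound, multiplied by a factor $d$ coming from the depth of each anti-clique; combined with the path pieces this gives $n^3 d^{(3-p)/(p-1)}$. A single clique at each end cannot generate the $n^3$ factor: the slow $n^3$ mechanism is diffusion along a chain, and the barbell has essentially no chain once the cliques are large enough to meet $D_G\le D$. So your lower-bound construction covers $p\ge3$ (cycle) and $1<p\le2$ (barbell), but leaves the intermediate range $2<p<3$ open for generic $D$.
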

Theorem \ref{th:lp no boundary simplified} follows from the above theorem since $D_G\leq |V|$. 
Parts (a),(b) of Theorem \ref{th:main lpbound} are proved in Section \ref{s:lpbounds}. 
The convergence exponent $\theta_p$ exhibits phase transitions  at $p=2$ and $p=3$. The proofs of the upper bounds (a),(b), as well as  the extreme graphs we use to establish the lower bounds in Section \ref{s:lowerbounds}, change at these points.  

\begin{figure}[h]
    \centering

\begin{tikzpicture}[scale=1]
    \filldraw[black] (0,0) circle (0.05cm) node[anchor=north] {$0$};
    \foreach \s in {-1, 1} {
        \draw[thick] (0,0) -- (\s,0);
        \draw (\s*1.3, 0) node {$\cdots$};
        \draw[thick] (\s*1.6,0) -- (\s*2.6,0);
        \ifthenelse{\s=1}{\filldraw[black] (\s*2.6,0) circle (0.05cm) node[anchor=north east] {$n$}}{\filldraw[black] (\s*2.6,0) circle (0.05cm) node[anchor=north west] {$-n$}};
    }

    \def\n{6}
    \def\xx{4.5}
    \foreach \sc in {-1, 1} {
        \foreach \high in {1, -1, 2, -2} {
            \def\cent{\sc*\xx};
            \node[circle,minimum size=0.75 cm] at (\cent,\high) (b) {};
            \foreach\x in{1,...,\n}
            {
              \node[inner sep=0cm, minimum size=0.05cm, draw,circle, fill=black] (n-\x) at (b.{360/\n*\x}) {} ;
            }
            \foreach\x in{1,...,\n}{
                \foreach\y in{\x,...,\n}{
                    \ifnum\x=\y\relax\else
                    \draw[thick] (n-\x) edge[-] (n-\y);
                    \fi
                }
            \draw[red] (\sc*2.6, 0) -- (n-\x);
            }
        }
        \draw (\sc*\xx,0.1) node {$\vdots$};
    }
\end{tikzpicture}
    \caption{The graph $H_{6,n}$, a representative of the graphs $H_{d,n}$ used to prove the converse statement in Theorem \ref{th:main lpbound} for $1<p\leq 2$. There are $m=\frac{n}{6}$ cliques of size $6$ on each side. }    \label{fig:lp between 1 and 2}

\end{figure}
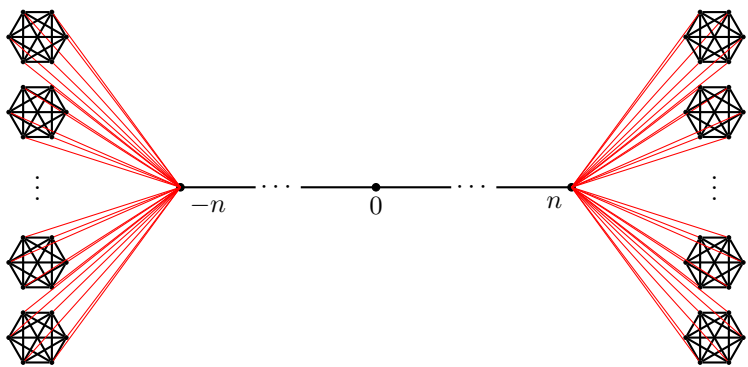

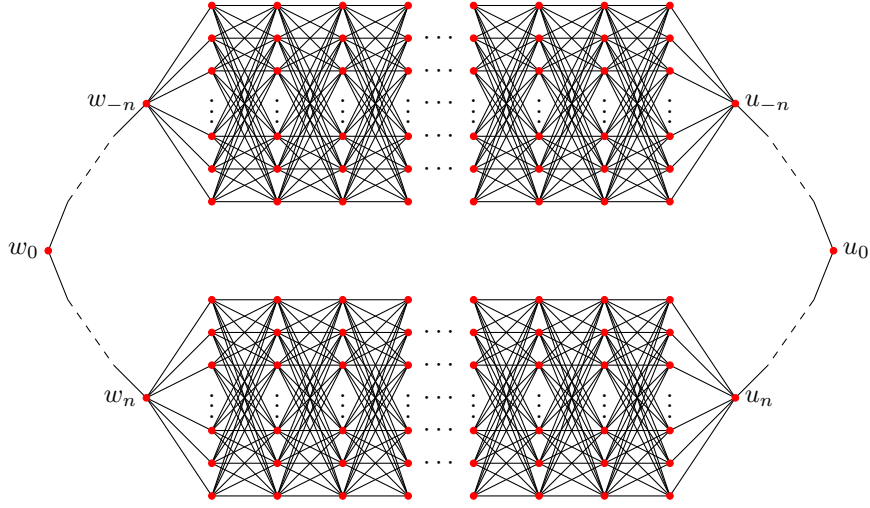
\begin{figure}[h!]
    \centering
\begin{tikzpicture}[scale=0.87]
  \def\goody{2.25};
  \foreach \sign in {1,-1} {
  \def\basey{ \sign*\goody };
  \foreach \x in {-3, -2, -1, 1, 2, 3} {
    \foreach \y in {-1.5, -1, -0.5, 0.5, 1, 1.5} {
        \foreach \yy in {-1.5, -1, -0.5, 0.5, 1, 1.5} {
          \draw (\x,\basey+\y) -- (\x+1,\basey+\yy);
        }
    }
  }

  \foreach \y in {-1.5, -1, -0.5, 0.5, 1, 1.5} {
    \draw (-4,\basey) -- (-3,\y+\basey);
  }
  \foreach \y in {-1.5, -1, -0.5, 0.5, 1, 1.5} {
    \draw (5,\basey) -- (4,\y+\basey);
  }
  \draw (-4.5,\basey-\sign*0.5) -- (-4,\basey);
  \draw (5.5,\basey-\sign*0.5) -- (5,\basey);
  \draw (-5.5,0) -- (-5.2,\sign*0.75);
  \draw (6.5,0) -- (6.2,\sign*0.75);
  \draw[dashed] (6.2,\sign*0.75) -- (5.5,\basey-\sign*0.5);
  \draw[dashed] (-5.2,\sign*0.75) -- (-4.5,\basey-\sign*0.5);

  \foreach \x in {-3, ..., 4} {
  \filldraw[black] (\x,\basey) circle (0cm) node {$\vdots$} ;
    \foreach \y in {-1.5, -1, -0.5, 0.5, 1, 1.5} {
        \filldraw[red] (\x,\basey+\y) circle (0.05cm) node[anchor=north] {} ;
    }
  }

  \foreach \y in {-1,-0.5,0,0.5,1}
    \filldraw[black] (0.5,\basey+\y) circle (0cm) node {$\cdots$} ;
  }

  \filldraw[red] (-4,\goody) circle (0.05cm) node[anchor=east, color=black] {$w_{-n}$} ;
  \filldraw[red] (5,\goody) circle (0.05cm) node[anchor=west, color=black] {$u_{-n}$} ;
  \filldraw[red] (-4,-\goody) circle (0.05cm) node[anchor=east, color=black] {$w_n$} ;
  \filldraw[red] (5,-\goody) circle (0.05cm) node[anchor=west, color=black] {$u_n$} ;

  \filldraw[red] (-5.5,0) circle (0.05cm) node[anchor=east, color=black] {$w_0$} ;
  \filldraw[red] (6.5,0) circle (0.05cm) node[anchor=west, color=black] {$u_0$} ;
\end{tikzpicture}
    \caption{The accordion graph, which is used to prove the lower bound in Theorem \ref{th:main lpbound} for $2\leq p \le 3$. Each of the top and bottom parts of the graph consists of $1+2n/d$ anti-cliques of size $d$ linked in a chain (where $d=\lfloor D/2 \rfloor$), with two paths of length $2n$ connecting them via the anchor nodes $w_{\pm n}$ and $u_{\pm n}$.}  
    \label{fig:accordion}
\end{figure}

\subsection{Background and history}\label{s:background}

The  $\ell^p$-energy minimization dynamics have been studied most intensively for $p=2$. In this case,  \eqref{eq:pdef}   updates the value at the selected vertex $v_t$ to the average of the values at its neighbours; in particular, the dynamics are linear. The  case $p=2$ is an asynchronous version of the    dynamics  introduced by deGroot~\cite{degroot1974reaching} as a model for non-Bayesian social learning (see also  the survey by Golub and Jackson \cite{golub2010naive}). In deGroot's original paper, the dynamics are synchronous, i.e., all vertices update their opinions simultaneously at each step, based on the current opinions.   DeGroot \cite{degroot1974reaching} and   Demarzo, Vayanos and Zwiebel \cite{demarzo2003persuasion} proved that if $G$ is not bipartite, then in the long run, opinions converge to consensus. Tight bounds on convergence rates for the asynchronous version of the deGroot dynamics were recently proved by Elboim, Peres and Peretz in \cite{elboim2022asynchronous} and are related to the spectral properties of the graph. In particular, Theorem $2.1$ (b) there implies that \[ \E[\tau_2(\epsilon)]\leq n^2 D_G \cdot  \Diam(G) \lceil \log_2(1/\epsilon)\rceil \leq n^3D_G \lceil \log_2(1/\epsilon)\rceil \, ,\]
and the RHS agrees with Theorem \ref{th:main lpbound} up to a $\log n$ factor.


Like the deGroot dynamics, one could study the synchronous $\ell^p$ energy minimization dynamics for any $1<p \leq \infty $, where at each time $t$ the opinions at \textit{all} vertices are updated simultaneously using the opinions at their neighbours. 
In the present work we restrict our attention to asynchronous updating.

The dynamics can also be considered  in continuous time, as was done for $p=2$  in \cite{elboim2022asynchronous}, by putting i.i.d.\ Poisson clocks on the vertices, and updating the value at a vertex when its clock rings.   All results that are stated in the present paper for the discrete time models can be easily translated to the continuous time models, by making a time-change. In particular, if $\tau^{\rm Cont}_p(\epsilon)$ denotes the $\epsilon$-consensus time of the continuous-time dynamics, then $\E\bigl[\tau^{\rm Cont}_p(\epsilon)\bigr]=\E\bigl[\tau_p(\epsilon)\bigr]/n$.

 
For the Lipschitz learning dynamics \eqref{eq:liplearning},  the update step is simple to calculate, but the absence of strict convexity changes the nature of arguments required to analyze the dynamics. 
Infinity harmonic functions arise as value functions for tug-of-war games analyzed by Peres, Schramm, Sheffield and Wilson \cite{peres2009tug}; they used this connection to generalize  Fact \ref{fa:infinity harmonic} (due to \cite{lazarus1999combinatorial}) on  existence of infinity harmonic extensions to infinite graphs and length spaces.   The game theoretic interpretation of the infinity harmonic extension immediately implies  that this extension operator is monotone: if the boundary values are increased, then the extension cannot decrease. This monotonicity can also be deduced from the convergence of the Lipschitz learning dynamics.  Lazarus et al. \cite{lazarus1999combinatorial} also presented a  polynomial time algorithm  for finding infinity harmonic extensions on finite graphs. Their algorithm was generalized to 
weighted graphs by Kyng, Rao, Sachdeva and Spielman \cite{kyng2015algorithms}. Oberman \cite{oberman2005convergent} suggested computing the infinity harmonic extensions in Euclidean space by first discretizing the problem and then solving the corresponding graph problem by an iterative method. 
To the best of our knowledge, no polynomial bounds on the convergence time of the dynamics \eqref{eq:pdef} and \eqref{eq:liplearning} have been previously obtained for any $p\neq 2$. 

We remark that the related problem of finding $p$-harmonic extensions on graphs was studied in the literature in the contexts of semi-supervised learning (see, e.g., Flores, Calder and Lerman \cite{flores2022analysis} and  Elmoataz, Desquesnes  and Toutain \cite{elmoataz2017game}) and of $\ell^p$-regression (e.g., by Adil, Kyng, Peng and Sachdeva in \cite{MR3909555}).

For $p=1$, the $\ell^1$ norm is no longer strictly convex and the minimizer in \eqref{eq:pdef} is no longer necessarily unique. Even when it is unique, the behaviour of the model is quite different than for $p>1$. For example, Figure \ref{fig:ell1 fixed point} gives a non-constant fixed point of the $\ell^1$ dynamics. 
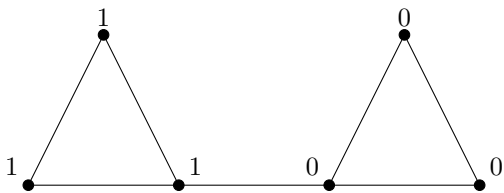
\begin{figure} 
    \centering
    \begin{tikzpicture}[>=stealth]

\coordinate (A) at (0,0);
\coordinate (B) at (-1,-2);
\coordinate (C) at (1,-2);

\draw (A) -- (B) -- (C) -- cycle;

\coordinate (D) at (4,0);
\coordinate (E) at (5,-2);
\coordinate (F) at (3,-2);

\draw (D) -- (E) -- (F) -- cycle;

\draw[-] (C) -- (F);

\foreach \vertex/\label in {A/1, B/1, C/1, D/0, E/0, F/0}
{
    \filldraw (\vertex) circle (2pt) node[above] {};
    
}

\node at (A) [above] {1};

\node at (D) [above] {0};
\node at (B) [above left=0.1mm] {1};

\node at (C) [above right=0.1mm] {1};

\node at (E) [above right=0.1mm] {0};

\node at (F) [above left=0.1mm] {0};

\end{tikzpicture}
    \caption{A fixed point of the $\ell^1$ energy minimization dynamics.}
    \label{fig:ell1 fixed point}
\end{figure}
\subsection{$p$-superharmonic functions and monotonicity of the dynamics.}\label{s:monotone}
Let $f:V\to \R$. For $1<p<\infty$, the dynamics \eqref{eq:pdef}
at time $t=1$, with initial opinion profile $f_0=f$ and update vertex $v=v_1$, yield  a new value $f_1(v)$ that minimizes
$\Psi_f(y)=\sum_{w \sim v} |y-f(w)|^p$. Thus
$$0=\Psi_f'(f_1(v))=p\sum_{w \sim v}|f_1(v)-f(w)|^{p-1}\sign(f_1(v)-f(w))\,.$$
We say that $f$ is $p$-{\bf superharmonic} at $v$ if $f(v) \ge f_1(v)$ when $f_1$ arises from an update at $v$. Since $\Psi_f$ is strictly convex, this is equivalent to
\begin{equation} \label{eq:psuper}
0 \le \Psi_f'(f(v))=p\sum_{w \sim v}|f(v)-f(w)|^{p-1}\sign(f(v)-f(w)) \,.
\end{equation}

This observation implies the known fact that the dynamics \eqref{eq:pdef}   are 
 monotone: 
\begin{claim} \label{claim-mono}
If the opinion profiles $f,g:V \to \R$  satisfy $f \le g$,  and   both profiles are updated at the same vertex $v$,
yielding $f_1$ and $g_1$ respectively, then $f_1(v) \le g_1(v)$. 
\end{claim}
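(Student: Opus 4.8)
The plan is to read off the conclusion directly from the first-order optimality condition \eqref{eq:psuper}, exploiting that the derivative $\Psi_f'$ depends monotonically on the underlying profile $f$. Write $\phi(s):=|s|^{p-1}\sign(s)$, which is a continuous, strictly increasing function on $\R$ (this is exactly where $p>1$ enters). With this notation the derivative governing the update at $v$ becomes
\[
\Psi_f'(y)=p\sum_{w\sim v}\phi\bigl(y-f(w)\bigr),
\]
and, since $x\mapsto |x|^p$ is strictly convex, $\Psi_f'$ is strictly increasing in $y$, so $f_1(v)$ is its unique zero; likewise $g_1(v)$ is the unique zero of $\Psi_g'$.

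First I would compare the two derivatives as functions of the common variable $y$. Because $f\le g$ pointwise, for every neighbour $w\sim v$ we have $y-f(w)\ge y-g(w)$, and the monotonicity of $\phi$ gives $\phi(y-f(w))\ge\phi(y-g(w))$. Summing over the neighbours $w\sim v$ yields the pointwise bound $\Psi_f'(y)\ge\Psi_g'(y)$ for every $y\in\R$; informally, $\Psi_f'$ is decreasing in each coordinate of the profile.

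To finish, I would evaluate this inequality at $y=g_1(v)$. Using $\Psi_g'(g_1(v))=0=\Psi_f'(f_1(v))$, we obtain
\[
\Psi_f'\bigl(f_1(v)\bigr)=0\le \Psi_f'\bigl(g_1(v)\bigr),
\]
and since $\Psi_f'$ is strictly increasing, this forces $f_1(v)\le g_1(v)$, as claimed. I do not anticipate a genuine obstacle: the entire argument rests on the single structural fact that $\phi$ is increasing, together with strict convexity to guarantee unique and well-separated minimizers. The only points needing a little care are keeping the directions of the inequalities straight and observing that, since $\phi$ is continuous, coincidences among the neighbour values $f(w)$ or $g(w)$ cause no difficulty.
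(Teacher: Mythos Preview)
Your proof is correct and follows essentially the same approach as the paper: both arguments establish $\Psi_f'(y)\ge\Psi_g'(y)$ for all $y$ and then evaluate at $y=g_1(v)$ to conclude via the superharmonicity criterion. Your use of the auxiliary function $\phi(s)=|s|^{p-1}\sign(s)$ streamlines the monotonicity step, where the paper instead splits into cases according to the sign of each summand.
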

\begin{proof}
For every $y \in \R$, we have
$$\Psi_f'(y)=p\!\!\sum_{w \sim v \atop  f(w)\leq y}\!\!\!(y-f(w))^{p-1}-p\!\!\sum_{w \sim v \atop f(w)>y}\!\!\!(f(w)-y)^{p-1} \,.$$
For each vertex $w \sim v$, when we replace $f(w)$ by $g(w)>f(w)$, the corresponding summand decreases (or becomes negative) if it is positive or zero, and increases in absolute value if it is negative. Thus 
$ \Psi_f'(y)\ge \Psi_g'(y)$ for all real $y$, whence $\Psi_f'(g_1(v)) \ge 0$, i.e., the function obtained from $f$ by replacing the value at $v$ by $g_1(v)$ is $p$-superharmonic at $v$. 
\end{proof}

\subsection{Sketch of convergence}\label{sub:basic}
Let us sketch why the convergence to consensus holds (Fact \ref{fa:conv to cons}). For this, the randomness assumption on the sequence of update vertices  $\{v_t\}_{t \ge 1}$ can be relaxed; it is only required that this  sequence  visits each vertex in $V$ infinitely often. 

For $1<p<\infty$, it is clear that the energy $\en_p(f_t)$ is non-increasing in $t$, and therefore must converge. We deduce that every subsequential limit of $\{f_t\}$ is a fixed point of the dynamics \eqref{eq:pdef}. Such functions are called $p$-{\bf harmonic} on $V$, and the usual proof of the maximum principle  shows that for $p>1$, any function that is $p$-harmonic at all vertices of a finite, connected graph is constant. Since $\min_{v \in V} f_t(v)$ is non-decreasing in $t$, we deduce that the opinions $\{f_t\}$ converge to a consensus. 
Our proof of Theorem  \ref{th:main lpbound}(a) is based on quantifying the expected  decrease   of the energy in each step.

The argument for $p=\infty$ is similar, once we find a potential function to replace the energy $\en_p(f)$. One such function is naturally suggested by the description in~\cite{kyng2015algorithms} of infinity harmonic extensions as lexicographic gradient minimizers (after  non-increasing rearrangement, as explained below). Given a function $f: V \to \mathbb R$ and and vertices $x,y$ such that  edge $e=\{x,y\} \in E$, define the {\bf gradient} $\nabla f(x,y):=f(y)-f(x)$ and the 
{\bf absolute gradient} 
$|\nabla f(e)|:=|f(y)-f(x)|$.
Enumerate the edges of $G$ as $\{e_i\}_{i=1}^m$, so that  the absolute gradients $\{|\nabla f(e_i)|\}_{i=1}^m$ are non-increasing, and define $\lex(f):=\sum_{i=1}^m |\nabla f(e_i)|3^{-i}$. If $f(v)$ differs from the average $\frac{f(v^+)+f(v^-)}{2}$ of the maximal and minimal neighbouring values, then moving $f(v)$ continuously towards this average decreases the largest absolute gradient at $v$, and could increase (at the same rate) only strictly smaller absolute gradients. It follows that $\lex(f_{t+1}) <\lex(f_t)$ for every $t$ such that $f_{t+1} \ne f_t$.  Thus the sequence $\{\lex(f_t)\}_{t \ge 0}$ must converge. Since the dynamics \eqref{eq:liplearning} and the mapping $f \mapsto \lex(f)$ are continuous, every subsequential limit of $\{f_t\}$ must be a fixed point of these dynamics, 
i.e, it is infinity harmonic on $V$. When there is no boundary, each subsequential limit  must be constant; the existence of   $\lim_t f_t$ follows from the monotonicity of $t \mapsto \min_v f_t(v)$ as before. 

Our proofs of Theorems \ref{th:linfty no boundary simplified}(a) and \ref{th:l infty with boundary simplified}(a) use better potential functions, for which we can obtain good estimates for the expected improvement over time. 


\section{  Convergence rates for Lipschitz learning}\label{s:linfty no boundary}

We first state a generalization of  Theorem \ref{th:linfty no boundary simplified} to arbitrary update sequences.

\begin{thm}\label{thm:liplearn}
     Let $G=(V,E)$ be a finite connected graph, and let $\{v_t\}_{t\geq 1}$ be an arbitrary sequence of vertices in $V$. 
     Define inductively a sequence of times $\{T_k\}$ by \begin{equation}  
T_0=0 \; \;  \text{ and} \quad   T_{k+1}=\inf\bigl\{t>T_k: \{v_j\}_{j=T_k+1}^t \; \text{covers} \; V\bigr\} 
\, ,
\end{equation}
    where the infimum of the empty set is $\infty$. Run the Lipschitz learning dynamics \eqref{eq:liplearning} on $G$ using the update sequence $\{v_t\}$, starting from initial profile $f_0:V\to [0,1]$. Then for every $k\geq 1$ and every $t\geq T_k$, we have  
     \[ \osc(f_t) \leq 2\exp\left( \frac{-k} {\Diam^2(G)+\Diam(G)}\right) \, .\]
     Therefore, \begin{equation}
         \tau_\infty(\epsilon)\leq T_{k(\epsilon)} \quad \text{where}  \quad k(\epsilon) := \ceil{\bigl(\Diam(G)^2+\Diam(G)\bigr) \log \frac{2}{\epsilon}} \, .
     \end{equation}
\end{thm}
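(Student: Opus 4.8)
The plan is to reduce the theorem to a single estimate: that oscillation contracts by a fixed multiplicative factor during each time interval on which the update sequence covers $V$. Two preliminary observations set this up. First, the Lipschitz-learning update is monotone: since $f\le g$ implies $\max_{u\sim v}f(u)\le\max_{u\sim v}g(u)$ and likewise for the minimum, the updated values are ordered, exactly as in Claim \ref{claim-mono}. Second, each update replaces $f(v_t)$ by a value lying between $\min_{w\sim v_t}f_{t-1}(w)$ and $\max_{u\sim v_t}f_{t-1}(u)$, hence between the global minimum and maximum; therefore $t\mapsto\max_v f_t(v)$ is non-increasing, $t\mapsto\min_v f_t(v)$ is non-decreasing, and $\osc(f_t)$ is non-increasing. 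Consequently it suffices to prove, for each $k$, a contraction of the form $\osc(f_{T_{k+1}})\le\bigl(1-\tfrac1{\Diam^2(G)+\Diam(G)}\bigr)\osc(f_{T_k})$ on the cover interval $(T_k,T_{k+1}]$; iterating over $j=0,\dots,k-1$ and using $(1-x)^k\le e^{-kx}$ together with $\osc(f_0)\le1$ then yields the stated bound for all $t\ge T_k$, the factor $2$ absorbing the normalization and any off-by-one in the first interval.

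The engine for the per-cover contraction is a barrier-plus-monotonicity argument. Normalize so that $\min_v f_{T_k}(v)=0$ and $\max_v f_{T_k}(v)=\osc(f_{T_k})=:\sigma$. Fix a vertex $y$ with $f_{T_k}(y)=0$ and set $r(v)=\mathrm{dist}(v,y)\le\Diam(G)$. I would build an upper barrier $\psi(v)=\sigma\,\Theta\bigl(r(v)\bigr)$ from a concave, non-decreasing profile $\Theta$ normalized by $\Theta(\Diam(G))=1$, chosen so that $\psi$ is \emph{strictly} infinity-superharmonic with a uniform defect: $\psi(v)-\tfrac12\bigl(\max_{u\sim v}\psi(u)+\min_{w\sim v}\psi(w)\bigr)\ge\eta$ for every $v$, where $\eta\ge\sigma/(\Diam^2(G)+\Diam(G))$. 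A quadratic-type profile gives exactly this defect because $-\tfrac12\Theta''$ is of order $1/\Diam^2$. The point of the defect is that, as long as $f\le\psi$ is maintained (which monotonicity guarantees, since $\psi$ is superharmonic), every update at a vertex $v$ forces $f(v)\le\tfrac12(\max_{u\sim v}\psi+\min_{w\sim v}\psi)\le\psi(v)-\eta$, and this inequality persists under all later updates. Hence once the cover is complete, $f_{T_{k+1}}\le\psi-\eta$ everywhere, so the maximum has dropped by at least $\eta$; running the mirror-image argument anchored at a maximal vertex pushes the minimum up by a comparable amount, and the two bounds combine to the desired contraction of $\osc$.

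The main obstacle is reconciling the two competing demands on the barrier: $\psi$ must dominate $f_{T_k}$ pointwise, yet have a uniform superharmonic defect of order $\sigma/\Diam^2$ and maximum only of order $\sigma$. These are in tension precisely because $f_{T_k}$ may change by $\sigma$ across a single edge, hence rise with unit slope (in $\sigma$-units) near an isolated deep minimum, whereas a concave barrier with defect $1/\Diam^2$ rises with slope only $O(1/\Diam)$ and cannot dominate such a steep profile. I would resolve this by anchoring the barrier not at a single vertex $y$ but at a sub-level set, i.e.\ working with the distance to $\{f_{T_k}\le\lambda\}$ and tracking how the high plateau recedes by one distance-layer per cover while losing amplitude $\Theta(\sigma/\Diam)$ per layer; summed over the $\le\Diam$ layers this yields the overall $\Theta(1/\Diam^2)$ contraction. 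Equivalently, one may exploit that along any geodesic the dynamics degenerate to nearest-neighbour averaging, for which the $\Diam^2$ relaxation rate is classical, and transfer that rate to the ambient graph through the comparison above. Once the per-cover contraction is established, the final statement $\tau_\infty(\epsilon)\le T_{k(\epsilon)}$ follows by choosing $k(\epsilon)=\ceil{(\Diam^2(G)+\Diam(G))\log\tfrac2\epsilon}$ so that $2\exp\bigl(-k(\epsilon)/(\Diam^2(G)+\Diam(G))\bigr)\le\epsilon$, and invoking monotonicity of $\osc$ for $t\ge T_{k(\epsilon)}$.
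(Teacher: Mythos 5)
Your reduction is to a false statement, and this is the fatal gap: the per-cover contraction $\osc(f_{T_{k+1}})\le\bigl(1-\tfrac{1}{\Diam^2(G)+\Diam(G)}\bigr)\osc(f_{T_k})$ does not hold. Consider the path on vertices $\{0,1,2,3,4\}$ with initial profile $f_0=(0,0,1,1,1)$ and the covering update sequence $v_1=0$, $v_2=4$, $v_3=2$, $v_4=1$, $v_5=3$. Updating $0$ while its only neighbour still has value $0$ leaves $f(0)=0$; updating $4$ while its only neighbour still has value $1$ leaves $f(4)=1$; the remaining three updates yield the profile $\bigl(0,\tfrac14,\tfrac12,\tfrac34,1\bigr)$. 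Hence $\osc(f_{T_1})=1=\osc(f_0)$: a complete cover of $V$ produced no contraction at all. The example scales: on a path of length $L$ with the step profile at the middle, updating each endpoint first and then proceeding from the outside in keeps the oscillation exactly equal to $1$ for order $L$ consecutive covers. So no per-cover contraction with any positive rate is available, and the factor $2$ in the theorem is not a harmless normalization --- the bound $2\exp\bigl(-k/(\Diam^2(G)+\Diam(G))\bigr)$ is deliberately trivial for small $k$ precisely because the oscillation can stall before decay sets in. This is also why your barrier cannot be constructed: the tension you correctly identify (the barrier must dominate a profile with unit-slope jumps, yet be concave with defect of order $\sigma/\Diam^2$ and height only $\sigma$) is not a technical obstacle to be engineered around; it is the falsity of the target statement manifesting itself. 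Your proposed repairs (sub-level-set anchoring, comparison with averaging along a geodesic) are sketches, and no completion of them can establish the per-cover oscillation contraction.

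The idea you are missing is to track a richer potential than the single number $\osc(f_t)$, namely a concave modulus of continuity. The paper's Claim \ref{claim-conc0} shows that an update at $v$ forces $|f^v(u)-f^v(v)|\le\tfrac12\bigl(\omega(d_G(u,v)-1)+\omega(d_G(u,v)+1)\bigr)$, and Lemma \ref{lemma-conc} then shows that one full cover upgrades a concave modulus $\omega_k$ for $f_{T_k}$ to the modulus $\omega_{k+1}(j)=\tfrac12\bigl(\omega_k(j-1)+\omega_k(j+1)\bigr)$ for $f_{T_{k+1}}$ --- a discrete heat flow. What contracts per cover is not the oscillation but the mass $S_k=\sum_{i=0}^{L}\omega_k(i)$, which satisfies $S_{k+1}\le S_k-\tfrac12\omega_k(1)\le S_k\bigl(1-\tfrac{1}{L(L+1)}\bigr)$; the oscillation is recovered only at the end via $\osc(f_{T_k})\le\omega_k(L)\le\tfrac{2}{L}S_k$, which is exactly where the factor $2$ enters. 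Your counterexample illustrates the distinction perfectly: the oscillation is unchanged after the cover, but the modulus improves from $\omega_0(1)=1$ to $\omega_1(1)=\tfrac12$, and it is this improvement, propagated over many covers out to the diameter scale, that eventually forces the oscillation down.
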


Let   
$d_G: V \times V \to \N=\{0,1,2,\dots\}$ denote the graph distance in $G$. 
We say that a non-decreasing function $\omega: \N\rightarrow [0,\infty)$ that satisfies $\omega(0)=0$ is a \textbf{modulus of continuity} for  a function $f:V\rightarrow \R$,  if   $|f(u)-f(v)|\leq \omega(d_G(u,v))$ for all $u,v\in V$.
\begin{figure}
    \centering
        \includegraphics[width=1\columnwidth]{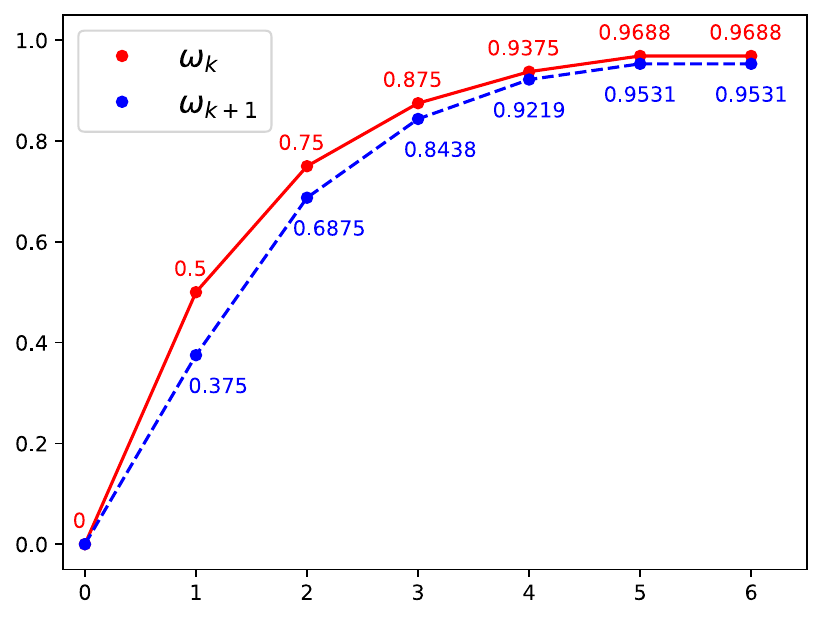}         \caption{An example of the transition from $\omega_k$ to
$\omega_{k+1}$ on a graph of diameter 5. }
    \label{fig:lip learn modulus example}
\end{figure}
The key to proving   Theorem \ref{thm:liplearn} is the following claim.
\begin{claim} \label{claim-conc0}
Let $L=\Diam(G)$ in the graph metric. Suppose that  $\omega$ is a   modulus of continuity for $f: V \to {\mathbb R}$ that satisfies $\omega(r)=\omega (L)$ for all $r\geq L$. If a Lipschitz learning update is performed at $v \in V$ leading to the profile $f^v$,  then  for every $u\in V\setminus \{v\}$, we have 
\[|f^v(u)-f^v(v)|\leq \frac{\omega(d_G(u,v)-1)+\omega(d_G(u,v)+1)}{2}\,.\] 
\end{claim}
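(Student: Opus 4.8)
The plan is to exploit the two-sided structure of the update $f^v(v)=\tfrac12\bigl(f(w^+)+f(w^-)\bigr)$, where $w^+\sim v$ and $w^-\sim v$ attain $\max_{w\sim v}f(w)$ and $\min_{w\sim v}f(w)$ respectively. Write $d:=d_G(u,v)\ge 1$, which is positive since $u\ne v$. The central observation is that a shortest path from $u$ to $v$ passes through some neighbour $w_0\sim v$ with $d_G(u,w_0)=d-1$, and that, being a neighbour of $v$, this $w_0$ satisfies $f(w^-)\le f(w_0)\le f(w^+)$. Meanwhile every neighbour of $v$ (in particular $w^+$ and $w^-$) lies within graph distance $d+1$ of $u$, since $|d_G(u,w)-d_G(u,v)|\le 1$ whenever $w\sim v$; hence its $f$-value differs from $f(u)$ by at most $\omega(d+1)$.

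With these two facts in hand I would bound $f(u)-f^v(v)$ and $f^v(v)-f(u)$ separately, in each case splitting the averaged difference into a \emph{close} term controlled by $\omega(d-1)$ and a \emph{far} term controlled by $\omega(d+1)$. For the upper bound,
\[
f(u)-f^v(v)=\tfrac12\bigl[(f(u)-f(w^+))+(f(u)-f(w^-))\bigr];
\]
here $f(u)-f(w^+)\le f(u)-f(w_0)\le\omega(d-1)$ because $f(w^+)\ge f(w_0)$, while $f(u)-f(w^-)\le|f(u)-f(w^-)|\le\omega(d+1)$ from the neighbour distance bound. For the reverse inequality one symmetrically writes $f^v(v)-f(u)=\tfrac12\bigl[(f(w^+)-f(u))+(f(w^-)-f(u))\bigr]$ and uses $f(w^-)\le f(w_0)$ to control the second summand by $\omega(d-1)$ and the neighbour bound to control the first by $\omega(d+1)$. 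Averaging the two halves yields $|f(u)-f^v(v)|\le\tfrac12\bigl(\omega(d-1)+\omega(d+1)\bigr)$ in both directions, which is exactly the claim (note $\omega(d-1)=\omega(0)=0$ when $u\sim v$).

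The hypothesis $\omega(r)=\omega(L)$ for $r\ge L$ enters only to make $\omega(d+1)$ meaningful and correctly capped when $d=L$: since $d_G(u,w^-)\le L$ always and $\omega$ is non-decreasing, it guarantees $\omega(d_G(u,w^\pm))\le\omega(d+1)$ with no boundary effect at the diameter. I expect the main (though modest) obstacle to be the asymmetric bookkeeping: one must pair $w^+$ with the \emph{close} estimate in the upper bound but with the \emph{far} estimate in the lower bound, and vice versa for $w^-$, invoking the max/min defining property of $w^\pm$ (rather than the modulus of continuity) precisely for the close term. Everything else reduces to the triangle inequality for $d_G$ and monotonicity of $\omega$.
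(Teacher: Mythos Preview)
Your proof is correct and is essentially the same as the paper's: both use the neighbour $w_0$ (the paper calls it $v^u$) on a shortest path from $v$ to $u$ to obtain the $\omega(d-1)$ term, and the triangle inequality for the $\omega(d+1)$ term, pairing each with the appropriate extremal neighbour $w^+$ or $w^-$. The only difference is cosmetic organization---the paper first replaces $f(w^-)$ by $f(w_0)$ inside the average before subtracting $f(u)$, while you expand the difference and then bound the two summands separately.
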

\begin{proof}
Fix $u\in V\setminus \{v\}$ and let   $v^u$ be a neighbour of $v$ lying on a shortest path from $v$ to $u$. Denote by $v^+$  (respectively, $v^-$) a neighbour  of $v$ that maximizes (respectively, minimizes) $f$.
Then
\begin{equation*}
   f^v(v)= \frac{f (v^+)+f (v^-)}{2} \le  
     \frac{f (v^+)+f (v^u)}{2} \, .
\end{equation*}
Let $r=d_G(u,v)$, so $d_G(v^{\pm},u)\leq r+1$ and $d_G(v^u,u)=r-1$. Then 
\begin{equation}\label{eq:up1}
    f^v(v)-f^v(u)\le    \frac{f (v^+)-f (u)+f (v^u)-f (u)}{2} \le \frac{\omega(r+1)+\omega(r-1)}{2}\, .
\end{equation}
Similarly, since
\begin{equation*}
 f^v(v)\ge \frac{f(v^u)+f(v^-)}{2}   \, ,
\end{equation*}
we infer that
\begin{equation}\label{eq:down1}
    f^v(u)-f^v(v)\le     \frac{f(u)-f(v^u)+f(u)-f(v^-)}{2} \le \frac{\omega(r-1)+\omega(r+1)}{2}\, .
\end{equation}
The inequalities \eqref{eq:up1} and \eqref{eq:down1} complete  the proof of the claim.
\end{proof}

\begin{lem} \label{lemma-conc}
In the setting of Theorem \ref{thm:liplearn},  suppose that  $\omega_k$ is a concave modulus of continuity for $f_{T_k}$ that satisfies $\omega_{k}(r)=\omega_k(L)$ for $r\geq L$. Let $\omega_{k+1}(0)=0$ and define 
\begin{equation}\label{eq:def wk}
\omega_{k+1}(j)=\begin{cases}
    \frac{\omega_k(j-1)+\omega_k(j+1)}{2}  \ \ 1\leq j\leq L \, ,\\
    \omega_{k+1}(L) \ \  j>L \,.
\end{cases}   \end{equation} 
Then \begin{itemize}
\item[(a)] $\omega_k$ is a  modulus of continuity for $f_s$ for all $s \ge T_k$ . \\
\item[(b)] If the opinion at $v=v_t$ was updated at some time $t \in (T_k,s]$, then for every $u\in V\setminus \{v\}$, we have
\[|f_s(u)-f_s(v)|\leq 
 \omega_{k+1}(d_G(u,v)) \, .
 \] 
\item[(c)] $\omega_{k+1}$ is a concave modulus of continuity for $f_{T_{k+1}}$.
\end{itemize}
\end{lem}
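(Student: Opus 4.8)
The plan is to prove the three parts in the order (a), (b), (c), preceded by one elementary observation: the smoothing operator $\omega \mapsto \omega_{k+1}$ defined by $\omega_{k+1}(j)=\tfrac12\bigl(\omega_k(j-1)+\omega_k(j+1)\bigr)$ on $1\le j\le L$ (and held flat beyond $L$) preserves both monotonicity and concavity. Monotonicity of $\omega_{k+1}$ follows termwise from that of $\omega_k$, and concavity reduces, after expanding the definition, to $2\bigl[\omega_k(j-1)+\omega_k(j+1)\bigr]\ge \omega_k(j-2)+2\omega_k(j)+\omega_k(j+2)$, which is just the sum of the concavity inequalities of $\omega_k$ at $j-1$ and $j+1$; the boundary cases $j=1$ and $j=L$ are checked separately using $\omega_{k+1}(0)=0$ and the flatness convention. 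I would establish this \emph{before} touching part (b), since (b) relies on concavity of $\omega_{k+1}$, and this keeps the argument non-circular (the concavity of $\omega_{k+1}$ is a purely algebraic consequence of the concavity of $\omega_k$, independent of the dynamics).

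For part (a), I would induct on $s\ge T_k$, the base case $s=T_k$ being the hypothesis. In the inductive step an update at $w=v_{s+1}$ leaves every pair not involving $w$ unchanged, so only pairs $(w,u)$ need attention. Here Claim~\ref{claim-conc0} (applied with $f=f_s$ and $\omega=\omega_k$, which is legitimate by the inductive hypothesis together with the flatness of $\omega_k$) gives $|f_{s+1}(u)-f_{s+1}(w)|\le \tfrac12\bigl(\omega_k(d-1)+\omega_k(d+1)\bigr)$ with $d=d_G(u,w)$, and concavity of $\omega_k$ collapses the right-hand side to $\omega_k(d)$.

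The heart of the matter is part (b), which I would also prove by induction on $s\ge t$. The base case $s=t$ is exactly Claim~\ref{claim-conc0} at the update time $t$ (using part (a) to know $\omega_k$ controls $f_{t-1}$), whose conclusion is $\omega_{k+1}(d_G(u,v))$ by the very definition of $\omega_{k+1}$. For the inductive step, update at $w=v_{s+1}$: if $w=v$ we invoke Claim~\ref{claim-conc0} directly again; if $w\ne v$ then $f_{s+1}(v)=f_s(v)$ and only the pair $(w,v)$ can fail, so everything reduces to bounding $|f_{s+1}(w)-f_s(v)|$ with $r:=d_G(w,v)\ge 1$. Writing $f_{s+1}(w)$ as the midpoint of $\max_{x\sim w}f_s(x)$ and $\min_{y\sim w}f_s(y)$, the inductive hypothesis and monotonicity of $\omega_{k+1}$ confine every neighbour to $f_s(v)\pm\omega_{k+1}(r+1)$, while a neighbour of $w$ lying on a shortest path to $v$ has distance $r-1$ and therefore forces $\min_{y\sim w}f_s(y)\le f_s(v)+\omega_{k+1}(r-1)$ and, symmetrically, $\max_{x\sim w}f_s(x)\ge f_s(v)-\omega_{k+1}(r-1)$. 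Averaging the two extremes yields $|f_{s+1}(w)-f_s(v)|\le \tfrac12\bigl(\omega_{k+1}(r-1)+\omega_{k+1}(r+1)\bigr)$, and concavity of $\omega_{k+1}$ reduces this to $\omega_{k+1}(r)$, closing the induction. This asymmetric use of the shortest-path neighbour — which pulls one of the two extremes back toward the reference value $f_s(v)$ — together with the concavity of $\omega_{k+1}$, is the key mechanism, and I expect this step to be the main obstacle.

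Finally, part (c) is immediate from (b): by the definition of $T_{k+1}$, every vertex is updated at least once in $(T_k,T_{k+1}]$, so for any pair $u\ne v$ we apply (b) with $s=T_{k+1}$ to the vertex $v$ (which was updated in that window) to get $|f_{T_{k+1}}(u)-f_{T_{k+1}}(v)|\le \omega_{k+1}(d_G(u,v))$. Since concavity of $\omega_{k+1}$ was already established and its flatness beyond $L$ holds by definition, $\omega_{k+1}$ is indeed the required concave modulus of continuity for $f_{T_{k+1}}$, which lets the induction on $k$ in Theorem~\ref{thm:liplearn} proceed.
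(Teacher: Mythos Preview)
Your proof is correct and follows the same overall skeleton as the paper's: induct on $s\ge T_k$ to establish (a) and (b), then read off (c) from the fact that every vertex is hit in $(T_k,T_{k+1}]$; the preliminary observation that $\omega_{k+1}$ inherits monotonicity and concavity from $\omega_k$ is also exactly what the paper records.

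There is one place where you work harder than necessary. In the inductive step for (b) when the freshly updated vertex $w=v_{s+1}$ differs from $v$, you feed the inductive hypothesis of (b) (the $\omega_{k+1}$ bound centred at $v$) into the midpoint formula and then invoke concavity of $\omega_{k+1}$ to collapse $\tfrac12\bigl(\omega_{k+1}(r-1)+\omega_{k+1}(r+1)\bigr)$ to $\omega_{k+1}(r)$. The paper avoids this detour: since by part (a) the full modulus $\omega_k$ controls $f_s$, one can apply Claim~\ref{claim-conc0} directly at the updated vertex $w$ (with $u=v$ as the other point) and obtain
\[
|f_{s+1}(w)-f_{s+1}(v)|\le \tfrac12\bigl(\omega_k(r-1)+\omega_k(r+1)\bigr)=\omega_{k+1}(r)
\]
by the very definition of $\omega_{k+1}$. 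Thus the induction for (b) never needs concavity of $\omega_{k+1}$ at all; Claim~\ref{claim-conc0} plus part (a) already outputs $\omega_{k+1}$ on the nose. Your route is a legitimate alternative, and establishing concavity of $\omega_{k+1}$ up front keeps it non-circular as you note, but the paper's observation that $\omega_{k+1}$ is \emph{defined} as the right-hand side of Claim~\ref{claim-conc0} makes the whole inductive step a one-liner.
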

\begin{proof} 
We prove parts (a) and (b) together by induction on $s \ge T_k$. The base case is clear, and the induction step from $s-1$ to $s$ follows from Claim \ref{claim-conc0} and the concavity of $\omega_k$.

The definition of $\omega_{k+1}$ implies that it inherits the monotonicity and concavity properties from $\omega_k$. 
Part (c) follows, since from time $T_k$ to  time $T_{k+1}$, each vertex was updated at least once.
 \end{proof}
\begin{figure}
    \centering
    \includegraphics[width=1\columnwidth]{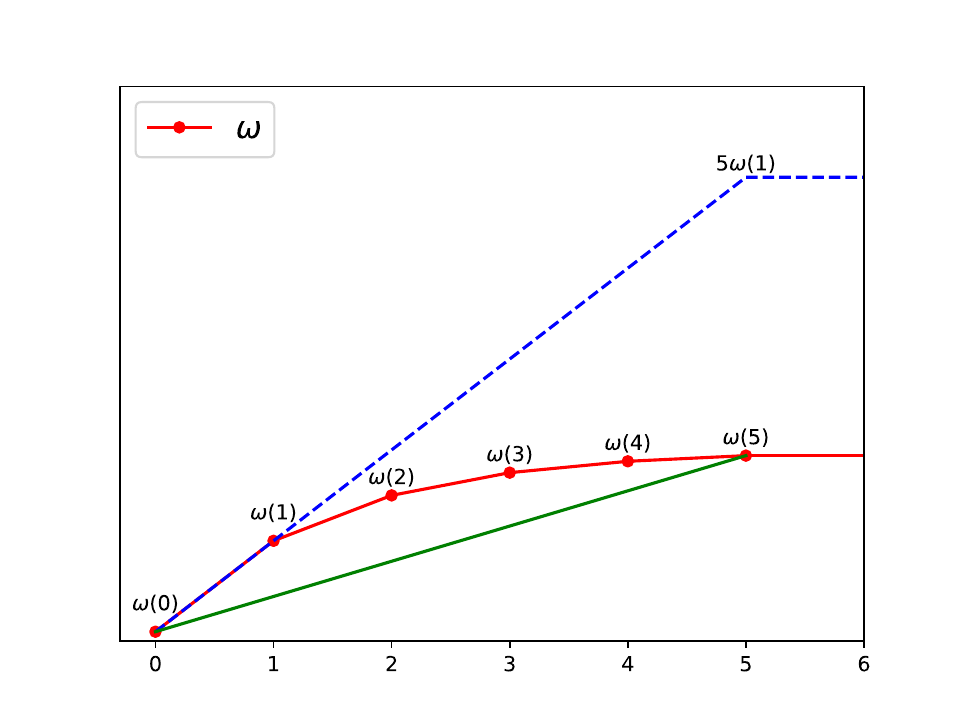}         \caption{The modulus of continuity $\omega$ is bounded on $[0,L]$ between the two straight lines of slopes       $\omega(L)/L$ and $\omega(1)$, respectively.}
    \label{fig:lip learn modulus steps}
\end{figure}

\begin{proof}[Proof of Theorem \ref{thm:liplearn}]
Given a function $\omega:\N \rightarrow [0,\infty)$ that satisfies $\omega(0)=0$, let $S(\omega):=\sum_{i=0}^{L} \omega(i)$. If $\omega$ is also concave and non-decreasing, then  
\[\frac{L \omega(L)}{2}\leq S(\omega)\leq \frac{L (L+1)}{2}\omega(1).\] 

The function $\omega_0(j)={\bf 1}_{\{j>0\}}$ is a concave modulus of continuity for the initial profile $f_0: V \to [0,1]$. For $k \ge 1$, define $\omega_k$ recursively via \eqref{eq:def wk} and 
let $S_k:=S(\omega_k)$. Then by  \eqref{eq:def wk} we have \begin{equation*}
    S_{k+1}\leq S_k - \frac12 \omega_k(1)\leq S_k \left(1-\frac{1}{L (L+1)}\right)\,,
\end{equation*}
and therefore 
\begin{equation*} 
  \omega_k(L) \leq \frac{2}{L} S_k\leq \frac{2}{L} \left(1-\frac{1}{L (L+1)}\right)^k S_0  = 2 \left(1-\frac{1}{L (L+1)}\right)^k \, .    
\end{equation*}
Since $\osc(f_t)$ is non-increasing in $t$ and  $1-x\leq e^{-x}$, we deduce that for $t \ge T_k$,
\[ \osc(f_t)\leq \osc(f_{T_k})\leq \omega_k(L) \leq 2\exp\Bigl(-k/\bigl(\Diam^2(G)+\Diam(G)\bigr)\Bigr)\, , \]
completing the proof of the theorem.

\end{proof}

\begin{proof}[\bf{Proof of Theorem \ref{th:linfty no boundary simplified} (a)}]
   First note that $T_k-T_{k-1}$ are simply coupon collector times and therefore 
    \[\E[T_k]=k\E[T_1]\leq kn(\log n+1).\]
    Set $k=\ceil{\bigl(\Diam(G)^2+\Diam(G)\bigr) \log \frac{2}{\epsilon}}$.
   By the previous theorem, for  every $t\geq T_k$ we have 
   $ \osc(f_t) \leq \epsilon$,  and therefore \begin{equation*}
       \E[\tau_{\infty}(\epsilon)]\leq \E[T_k] = k\E[T_1] \leq k n(\log n+1)\leq n(\log n+1) (\Diam(G)+1)^2 \log \frac{2
       }{\epsilon}\, .
   \end{equation*}
\end{proof}
\begin{proof}
    [\bf{Proof of Proposition \ref{prop:syst}}]
    When using round robin updates, $T_k=nk $. We conclude from Theorem \ref{thm:liplearn} that $$\tau_\infty(\epsilon)\leq n
    (\Diam(G)+1)^2\log \frac{2}{\epsilon}\, .$$  
\end{proof}


\section
{Lipschitz learning with prescribed boundary values}\label{s:linftywithboundary}

\subsection{The infinity harmonic extension.}
Fix $G=(V\cup B,E)$.
Given a path $\gamma=(\gamma_0\rightarrow \gamma_1\rightarrow \gamma_2\rightarrow \dots \rightarrow \gamma_\ell)$ in $G$
and a real-valued function $h$ defined at the endpoints of $\gamma$, the \textbf{slope} of $h$ on $\gamma$ is $\frac{h(\gamma_\ell)-h(\gamma_0)}{\ell}$. 
We say that the simple path $\gamma=(\gamma_0\rightarrow \gamma_1\rightarrow \gamma_2\rightarrow \dots \rightarrow \gamma_\ell)$ of length $\ell \ge 2$  is a \textbf{bridge} if  $\gamma_0,\gamma_\ell\in B$ and $\gamma_1,\gamma_2,\dots,\gamma_{\ell-1}\in V$.

We now state a more detailed version of Fact \ref{fa:infinity harmonic}:
\begin{prop}\label{pr:infinity harmonic2}\cite{lazarus1999combinatorial}
    Let $G=(V\cup B,E)$ be a finite connected graph, where $V, B \ne \varnothing$. 
    Given  boundary values $h_B:B\to \R$,  there exists a unique extension $h:V\cup B \rightarrow \R$ of $h_B$ that is infinity harmonic on $V$. The extension $h$ is linear along every bridge $\gamma=(\gamma_0\rightarrow \gamma_1\rightarrow \gamma_2\rightarrow \dots \rightarrow \gamma_\ell)$ of maximal slope for $h_B$, that is, $h(\gamma_{k+1})-h(\gamma_k)$ equals the slope for all $0\leq k < \ell$.
\end{prop}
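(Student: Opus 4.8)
The plan is to establish the three assertions — existence, uniqueness, and linearity along maximal bridges — by largely separate arguments: the first two rest on a comparison (maximum) principle for the discrete infinity Laplacian, while the last is an essentially self-contained consequence of infinity harmonicity together with connectivity, needing only existence.

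For existence I would iterate the synchronous update operator $U$ defined by $(Uf)(v)=\tfrac12\bigl(\max_{u\sim v}f(u)+\min_{w\sim v}f(w)\bigr)$ for $v\in V$ and $(Uf)(v)=h_B(v)$ for $v\in B$. This $U$ is monotone (if $f\le g$ then $Uf\le Ug$) and continuous on $\R^{V\cup B}$, being built from finite maxima and minima of the coordinates. Starting from the constant profile $f^{(0)}\equiv\max_B h_B$ on $V$ (and $=h_B$ on $B$), one checks $f^{(0)}$ is a supersolution, i.e.\ $Uf^{(0)}\le f^{(0)}$; monotonicity then forces the orbit $f^{(k)}:=U^k f^{(0)}$ to be non-increasing, and it remains in $[\min_B h_B,\max_B h_B]$, hence converges pointwise to some $h$. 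Continuity of $U$ gives $Uh=h$, so $h$ is infinity harmonic on $V$ per \eqref{eq:infinity harm} and extends $h_B$.

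The heart of the matter, and the step I expect to be the main obstacle, is the comparison principle: if $\underline h$ is infinity subharmonic on $V$, $\overline h$ is infinity superharmonic, and $\underline h\le\overline h$ on $B$, then $\underline h\le\overline h$ everywhere. Uniqueness follows immediately, since two infinity harmonic extensions of $h_B$ are each both sub- and superharmonic and agree on $B$. To prove comparison I would argue by contradiction: set $M:=\max(\underline h-\overline h)>0$, so the maximizing set $S:=\{\underline h-\overline h=M\}$ is contained in $V$ (on $B$ the difference is $\le 0$). For an interior $v\in S$, combining the sub/superharmonic inequalities with the edgewise bound $\underline h(x)-\overline h(x)\le M$ over neighbours $x$ forces the chain $M\le \tfrac12\bigl[(\max_x\underline h-\max_x\overline h)+(\min_x\underline h-\min_x\overline h)\bigr]\le M$ to be an equality. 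The delicate point is that degeneracy of the infinity Laplacian prevents the maximum from propagating to the boundary along a single monotone path; the resolution is to examine a vertex $v_0\in S$ maximizing $\underline h$ over $S$ and to extract from the equality case that \emph{all} neighbours of $v_0$ satisfy $\underline h=\underline h(v_0)$ and $\overline h=\overline h(v_0)$, hence lie in $S$. Thus the top level set $\{v\in S:\underline h(v)=\max_S\underline h\}$ is nonempty, interior, and closed under taking neighbours, so it is a union of connected components of $G$; since $G$ is connected and this set misses $B\ne\varnothing$, we reach a contradiction.

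For linearity along maximal bridges I would work directly with an infinity harmonic extension $h$. At each $v$, infinity harmonicity makes the largest upward and downward edge increments equal; call their common value $g(v)\ge 0$, so every absolute edge gradient is bounded by $G:=\max_v g(v)$, and $G$ equals the maximal absolute edge gradient of $h$. I would then show $G=S$, where $S$ is the maximal bridge slope of $h_B$. The bound $G\ge S$ is pigeonhole: along any bridge of slope $S$ the total rise is $S\ell$ over $\ell$ edges, so some edge has gradient $\ge S$. For $G\le S$, pick $v^*$ with $g(v^*)=G$ and build a steepest-ascent path, repeatedly stepping to a neighbour realizing the maximal neighbouring value; infinity harmonicity propagates $g\equiv G$ along the path and makes $h$ increase by exactly $G$ at each step, so (by strict monotonicity when $G>0$) the path is simple and must exit $V$ at a boundary vertex; a symmetric steepest-descent path does the same downward, and concatenating yields a bridge of slope $G$, whence $S\ge G$. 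Finally, for any bridge $\gamma=(\gamma_0\to\cdots\to\gamma_\ell)$ of maximal slope $S$, the total rise $h_B(\gamma_\ell)-h_B(\gamma_0)=S\ell$ is a sum of $\ell$ increments each at most $G=S$, forcing every increment to equal $S$; this is precisely the claimed linearity (the case $S=0$ being the trivial one where $h$ is constant).
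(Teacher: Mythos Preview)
Your argument is correct, but it is organized quite differently from the paper's. The paper proves existence and uniqueness \emph{together} by induction on $|V|$: it first establishes the key bridge lemma (Lemma~\ref{le:bridge slope}, essentially your steepest-ascent/descent construction) and then, at each inductive step, identifies a bridge of maximal slope, shows any infinity harmonic extension must be linear along it, transfers the interior vertices of that bridge to the boundary, and recurses. Existence is obtained constructively by the same peeling procedure, with a short verification that the function so produced is infinity harmonic at the newly-added boundary vertices.

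By contrast, you decouple the three assertions: existence via monotone iteration of the synchronous operator from a supersolution, uniqueness via a comparison principle proved through a strong-maximum-principle argument on the top level set of $\underline h-\overline h$, and linearity via the gradient-propagation argument (which is essentially the content of the paper's Lemma~\ref{le:bridge slope}). Your route is the more ``PDE-analytic'' one and yields a bonus: the comparison principle for general sub/supersolutions, which the paper does not isolate. The paper's route is more combinatorial and directly encodes the Lazarus et al.\ algorithm for computing $h$; this bridge-peeling structure is then reused verbatim in the convergence-rate analysis (Lemma~\ref{lem:keylip}), so the inductive proof dovetails better with what follows.
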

The proof of the proposition in \cite{lazarus1999combinatorial} is constructive. For convenience of the reader we include  an exposition of this proof, which uses the following lemma.
\begin{lem}\label{le:bridge slope}
    Let $G=(V\cup B,E)$ be a finite connected graph, where $V, B \ne \varnothing$. Suppose that  $h:V \cup B \to \mathbb R$ is infinity harmonic on $V$, and $e=\{v,w\}$ is an edge of $G$ with at least one endpoint in $V$. If $h(w)>h(v)$, then there is a  bridge $\gamma$  in $G$  that includes $e$, such that all the gradients of $h$ on $\gamma$ satisfy $h(\gamma_i)-h(\gamma_{i-1}) \ge  h(w)-h(v) $. In particular, the slope of $h|_B$ on $\gamma$ is at least  $ h(w)-h(v) $.

\end{lem}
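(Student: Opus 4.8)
The plan is to construct the bridge by extending the edge $e=\{v,w\}$ greedily in both directions, using infinity harmonicity to keep every step steep. Write $\delta:=h(w)-h(v)>0$. Call a neighbour $u$ of a vertex $x$ \emph{ascending} if $h(u)-h(x)\ge\delta$ and \emph{descending} if $h(x)-h(u)\ge\delta$. The engine of the proof is the following consequence of \eqref{eq:infinity harm}: for $x\in V$ we have $\max_{u\sim x}h(u)=2h(x)-\min_{u\sim x}h(u)$, so $\max_{u\sim x}h(u)\ge h(x)+\delta$ holds if and only if $\min_{u\sim x}h(u)\le h(x)-\delta$. In other words, an interior vertex has an ascending neighbour if and only if it has a descending one.

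First I would extend forward from $w$. If $w\in B$ the forward branch is empty; otherwise $w\in V$ has the descending neighbour $v$, hence, by the equivalence above, an ascending neighbour $x_1$ with $h(x_1)\ge h(w)+\delta$. If $x_1\in V$ then $w$ is a descending neighbour of $x_1$, so $x_1$ has an ascending neighbour $x_2$, and so on. This produces vertices $w=x_0,x_1,x_2,\dots$ with $h(x_{i+1})-h(x_i)\ge\delta$; since the $h$-values strictly increase, the vertices are distinct, and as $G$ is finite the process must halt. It can only halt at a vertex not in $V$, because at every interior vertex reached the construction continues; hence it terminates at some $x_a\in B$. Symmetrically, extending backward from $v$ through descending neighbours yields $v=y_0,y_1,\dots,y_b\in B$ with $h(y_i)-h(y_{i+1})\ge\delta$ and $y_0,\dots,y_{b-1}\in V$.

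Finally I would concatenate these branches into $\gamma=(y_b,\dots,y_1,v,w,x_1,\dots,x_a)$. Along $\gamma$ the values of $h$ read $h(y_b)<\dots<h(y_1)<h(v)<h(w)<h(x_1)<\dots<h(x_a)$, a strictly increasing sequence; consequently all vertices of $\gamma$ are distinct, so $\gamma$ is simple, and every gradient along $\gamma$ is at least $\delta=h(w)-h(v)$. Its endpoints $y_b,x_a$ lie in $B$ and all internal vertices lie in $V$, so $\gamma$ is a bridge containing $e$; its length is at least $2$ because at least one of $v,w$ is interior while both endpoints are boundary vertices. The slope bound is then immediate by telescoping, $h(\gamma_\ell)-h(\gamma_0)=\sum_{i=1}^{\ell}\bigl(h(\gamma_i)-h(\gamma_{i-1})\bigr)\ge\ell\delta$, so the slope of $h|_B$ on $\gamma$ is at least $\delta$. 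The only genuine subtlety is guaranteeing that the two greedy branches reach $B$ and do not collide; both issues are settled at once by the strict monotonicity of $h$ along the concatenated path, which forbids repeated vertices and, together with finiteness of $G$, forces termination on the boundary.
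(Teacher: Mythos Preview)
Your proof is correct and follows essentially the same greedy-extension approach as the paper: extend from $w$ toward neighbours of larger $h$-value and from $v$ toward neighbours of smaller $h$-value, using infinity harmonicity at each interior vertex to guarantee a step of size at least $\delta$, then concatenate. The paper always chooses the extremal neighbour $w^+$ (respectively $v^-$) at each step, whereas you allow any ascending (descending) neighbour; this makes no difference, and your explicit handling of simplicity and termination via strict monotonicity of $h$ along the concatenated path is a touch more detailed than the paper's version.
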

\begin{proof}
      If $w \in V$, denote by $w^{+ }$ a neighbour of $w$ in $V \cup B$ where  $h$ is maximized. 
    Since $h$ is infinity harmonic on $V$, 
    \begin{equation} \label{slopeq}
       h(w^+)-h(w ) = h(w)-h(w^-) \ge h(w )-h(v)>0\,.
    \end{equation}
    Continuing in this manner, we obtain a simple path $\gamma_+$ from $w$ to $B$ where all the gradients of $h$ are at least $h(w )-h(v)$.
    Similarly, for $v \in V$, let  $v^-$   be a neighbour of $v$ that minimizes $h$, and note that   $$h(v^-)-h(v)= h(v)-h(v^+)  \le h(v)-h(w)\,.$$  Continuing recursively, we obtain a simple path $\gamma_-$ from $v$ to $B$ where all the gradients are at  most $h(v)-h(w)<0$.
     Concatenating the reversal of $\gamma_-$, the oriented edge $v \to w$, and the path $\gamma_+$,
    we obtain a bridge   $\gamma$ where all the gradients of $h$ on $\gamma$  are at least $h(w)-h(v)$, as claimed. 
\end{proof}

\begin{proof}[Proof of Proposition \ref{pr:infinity harmonic2}]
We will use induction on $|V|$. Let $W$ be a connected component of $V$.
We separate two cases.

\noindent{\bf Case 1}. If $W$ is adjacent to a single node  $b_W  \in B$, then the usual proof of the maximum principle shows that the unique extension of $h_B$ to $W \cup B$ which is infinity harmonic on $W$ is obtained by defining $h (w)=h_B(b_W)$ for every $w \in W$.
If $W=V$, then we are done, otherwise we can replace $V$ and $B$ by $V \setminus W$ and $B \cup W$, respectively, and apply the induction hypothesis.  

\noindent{\bf Case 2}.
If $W$ is adjacent to at least two nodes  in $B$, then there is a bridge that intersects $W$. To verify the uniqueness of the extension,
 suppose that $h: V \cup B \to \mathbb R$ is an infinity harmonic extension of $h_B$, and $\gamma=(\gamma_0 \to \dots \to \gamma_\ell)$  is a bridge intersecting $W$ such that $h_B$ has maximal slope $s \ge 0$ among such bridges. Then Lemma \ref{le:bridge slope} implies that $h(\gamma_k)-h(\gamma_{k-1}) \le s$ for all $1 \le k \le \ell$; since the average of these $\ell$ gradients is $s$, we conclude that they are all equal to $s$. Adding the interior nodes of $\gamma$ to the boundary, and removing them from $V$, we infer the uniqueness of the infinity harmonic extension  by induction on $|V|$.
\medskip

 We can use the same argument to construct  the infinity harmonic extension of $h_B$. First define $h$ by linear interpolation on a bridge  $\gamma=(\gamma_0 \to \dots \to \gamma_\ell)$ intersecting $W$  where $h_B$ has maximal slope $s$. Then use the induction hypothesis  to extend $h$ to the rest of $V$, so it is infinity harmonic on $V \setminus\{\gamma_1,\ldots, \gamma_{\ell-1}\}$. It remains to check that $h$ is infinity harmonic on 
 $\{\gamma_1,\ldots ,\gamma_{\ell-1}\}$. If not, then there is some $k \in [1,\ell)$ such that either $h(\gamma_{k}^+) >h(\gamma_{k+1})$  or $h(\gamma_{k}^-) <h(\gamma_{k-1})$. If the first of these inequalities holds,  then    we use Lemma  \ref{le:bridge slope} to obtain a simple path $ {\widetilde{\gamma}}$ from $\gamma_k$ to some vertex $z \in B \cup \{\gamma_1,\ldots ,\gamma_{\ell-1}\}$ where $h$ has slope strictly greater than $s$. 
 Let ${\Gamma}$ be the concatenation of $\gamma_0 \to\dots \to \gamma_k$ and $ {\widetilde{\gamma}}$. If $z \in B$, then ${\Gamma}$ is a bridge intersecting $W$ in $G$, where $h_B$ has  slope strictly greater than $s$, a contradiction; otherwise, $z=\gamma_j$ for some $j \in \{k+2, \dots, \ell-1\}$, and  ${\Gamma}$ followed by the path $\gamma_j\to \dots \to \gamma_\ell$ is a bridge in $G$ which yields the same contradiction.  The  case where $h( \gamma_{k}^-) <h(\gamma_{k-1})$ is similar.
\end{proof}

\subsection{Convergence rates for Lipschitz learning with boundary.}
Denote by $\Delta_\infty$   the {\em infinity Laplacian} on $G=(V\cup B,E)$, which maps functions \mbox{$f:{V \cup B} \to \R$} to real valued functions on $V$ via
$$(\Delta_\infty f)(v)= f(v^+)
+f(v^-)-2f(v) \,,$$ 
where $v^+$ (respectively, $v^-$) denotes the neighbour of $v$ in $V \cup B$ at which $f$ attains its highest (respectively,  lowest) value.

A function $f: V\cup B \rightarrow \R$ is   {\em infinity superharmonic} at $v\in V$ if $(\Delta_\infty f)(v)\leq 0$;  it is infinity harmonic at $v$ iff \mbox{$(\Delta_\infty f)(v)=0$.}

Recall (Fact \ref{fa:infinity harmonic}), that given $h_B: B \to \R$, there exists a unique  extension \\ $h:V\cup B \rightarrow \R$ of $h_B$ which is infinity harmonic on $V$. This extension is invariant under the Lipschitz learning dynamics. Furthermore, by the monotonicity  of the dynamics,   
if $f_0(v)\geq h(v)$ for all $v\in V\cup B$, then $f_t(v)\geq h(v)$ for all $t$ and all $v\in V\cup B$.
Note also that if $f_0$ is \sph on $V$, then $f_t$ is \sph on $V$ for all $t\geq 0$. 

The following lemma lies at the heart of our analysis.  
\begin{lem} \label{lem:keylip}  
 Suppose that the functions $f,h:V \cup B \to \R$ satisfy $f|_B=h|_B$. If $h$ is infinity harmonic on $V$ and $f$ is infinity superharmonic on $V$, then 
\begin{equation} \label{eq:inftyto1}
        \|f-h\|_\infty \leq n\|\Delta_\infty f\|_1 \, ,
    \end{equation}
    where $n=|V|$ and $\|\Delta_\infty f\|_1=\sum_{v \in V} |(\Delta_\infty f)(v)|$. Therefore, 
    \begin{equation} \label{eq:1to1}
        \|f-h\|_1 \leq n^2\|\Delta_\infty f\|_1 \, .
    \end{equation} 
\end{lem}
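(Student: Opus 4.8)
The plan is to establish the $L^\infty$ bound \eqref{eq:inftyto1} and then deduce \eqref{eq:1to1} for free: since $f=h$ on $B$, we have $\|f-h\|_1=\sum_{v\in V}|f(v)-h(v)|\le n\,\|f-h\|_\infty$, so \eqref{eq:1to1} is immediate from \eqref{eq:inftyto1}.

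First I would reduce to a one-sided estimate. Because $f$ is infinity superharmonic on $V$, every Lipschitz-learning update replaces $f(v_t)$ by $\tfrac12\bigl(f(v_t^+)+f(v_t^-)\bigr)\le f(v_t)$ and preserves superharmonicity; running the dynamics with the boundary data $f|_B=h|_B$ along any schedule that visits each vertex infinitely often, the values decrease monotonically and converge to the infinity harmonic extension $h$ by \eqref{eq:infinity harmonic limit}. Hence $f\ge h$, so $u:=f-h\ge0$ with $u|_B=0$, and $\|f-h\|_\infty=M:=\max_{v\in V}u(v)$, attained at some interior $v^*$ (if $M=0$ there is nothing to prove). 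The plan is then to telescope $u$ along a simple path $v^*=z_0,z_1,\dots,z_\ell$ from $v^*$ to a boundary vertex $z_\ell\in B$: since $u(z_0)=M$ and $u(z_\ell)=0$, we get $M=\sum_{i<\ell}\bigl(u(z_i)-u(z_{i+1})\bigr)$. I would build the path by steepest descent of the harmonic function $h$ (at each $z_i$ let $z_{i+1}$ minimize $h$ among neighbours); then $h$ strictly decreases, the path is simple, and by the minimum principle for $h$ it reaches $B$ after at most $n$ interior steps, so $\ell\le n$. Thus \eqref{eq:inftyto1} reduces to showing that each decrement is at most $\|\Delta_\infty f\|_1$, whence $M\le \ell\,\|\Delta_\infty f\|_1\le n\,\|\Delta_\infty f\|_1$.

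For the per-step estimate at $z=z_i$ with $h$-minimal neighbour $z'=z_{i+1}$, write $u(z)-u(z')=\bigl(f(z)-f(z')\bigr)-\bigl(h(z)-h(z')\bigr)$. Harmonicity of $h$ gives $h(z)-h(z')=\max_w h(w)-h(z)=:s_z$ (the $h$-ascent and $h$-descent slopes agree), while superharmonicity of $f$ gives $f(z)-f(z^-)=\bigl(f(z^+)-f(z)\bigr)+\delta(z)$, where $\delta(z):=-(\Delta_\infty f)(z)\ge0$, and $z^\pm$ are the $f$-extremal neighbours. Writing $\nabla^+f(z):=\max_w f(w)-f(z)$ and using $f(z)-f(z')\le f(z)-f(z^-)$, these combine to bound the decrement by $\delta(z)+\bigl(\nabla^+f(z)-s_z\bigr)$, and a short computation gives $\nabla^+f(z)-s_z\le u(a)-u(z)$, where $a$ is the $f$-maximal neighbour; hence $u(z)-u(z')\le \delta(z)+\bigl(u(a)-u(z)\bigr)$.

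The main obstacle is controlling the cross term $u(a)-u(z)$. At the global maximizer it is $\le0$, so the very first decrement is at most $\delta(v^*)$; but once we descend it need not be nonpositive, and the naive bound $u(a)-u(z)\le M-u(z)$ lets the accumulated deficit grow geometrically. I expect the resolution to exploit the two one-sided structures together: along steepest descent the $f$-gradients are monotone (a superharmonic analogue of \Lref{le:bridge slope}), so the excess $u(a)-u(z)$ is no larger than the decrement incurred at the previous step; an induction should then keep every decrement bounded by the accumulated defect $\sum_{j\le i}\delta(z_j)\le\|\Delta_\infty f\|_1$. Making this telescoping close — equivalently, arranging the path so that the $f$-maximal neighbour never carries a strictly larger $u$-value, possibly by re-anchoring at the running maximum of $u$ — is the crux, precisely because the $\max/\min$-neighbour form of the infinity Laplacian blocks any linear comparison of $f$ with $h$. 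The loss of a factor $n$ (rather than $1$) is genuinely necessary here, as the tent-shaped profiles on a segment show: there a single unit of defect is charged against all $\sim n$ steps of the descent path.
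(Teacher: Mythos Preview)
Your reduction to $f\ge h$ and the per-step inequality $u(z)-u(z')\le\delta(z)+\bigl(u(a)-u(z)\bigr)$ are correct, but the proof has a genuine gap that you yourself flag: the cross term $u(a)-u(z)$ is not controlled, and the resolution you sketch does not work. The assertion that ``along steepest descent the $f$-gradients are monotone'' is false in general --- an $h$-steepest-descent path is \emph{not} a greedy path for $f$, so the gradient bound \eqref{eq:greedgrad} does not apply to it. Re-anchoring at the running maximum of $u$ can loop without reaching $B$, since nothing is guaranteed to decrease along such a modified path. There is also a smaller defect: when a component of $V$ is adjacent to a single boundary vertex, $h$ is constant on it and $h$-steepest descent does not yield a simple path to $B$.

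The paper's argument is structured quite differently. It proceeds by induction on $|V|$, exploiting (Proposition~\ref{pr:infinity harmonic2}) that $h$ is \emph{linear} along any bridge $\gamma^*$ of maximal slope for $h|_B$. On that bridge, pigeonhole locates an edge where $\nabla(f-h)\ge\alpha/(\ell-1)$, with $\alpha=\max_k\bigl(f(\gamma_k)-h(\gamma_k)\bigr)$. From that edge one follows a greedy path \emph{for $f$} (not for $h$); along such a path the $f$-gradients can drop by at most $\|\Delta_\infty f\|_1$ total. Comparing the initial gradient to the terminal one (which is $\le0$ at a local $f$-maximum, or $\le\beta$, the maximal $h$-slope, if the path reaches $B$) yields $\alpha\le(\ell-1)\|\Delta_\infty f\|_1$; then the interior of $\gamma^*$ is moved to the boundary and the inductive hypothesis applies. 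The key point is that the paper runs greedy paths for $f$ --- where gradient control is automatic --- and uses the linearity of $h$ along the maximal bridge to convert this into a bound on $f-h$. Your approach descends in $h$ and tries to bound $f-h$ directly, forcing a simultaneous comparison of $f$-extremal and $h$-extremal neighbours that does not close.
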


\begin{proof} 
Define $\delta:=\|\Delta_\infty f\|_1$. We will prove \eqref{eq:inftyto1} by induction on the number of interior vertices $n=|V|$.

We say that a path $\gamma = (\gamma_0\rightarrow \gamma_1 \rightarrow  \ldots \rightarrow \gamma_k)$  is  {\bf greedy} (for $f$) if $\{\gamma_j\}_{j=1}^{k-1}$ are distinct vertices in $V$,  and for every $1\leq i<k$, we have  $\gamma_{i+1}=\gamma_i^+$, that is, $f(\gamma_{i+1})=\max_{u\sim \gamma_i}f(u)$.
For any three adjacent vertices
$\gamma_{i-1},\gamma_i, \gamma_{i+1}$
along a greedy path,  
$$\nabla f(\gamma_{i-1},\gamma_i) =f(\gamma_i)-f(\gamma_{i-1}) \le  f(\gamma_i)-f(\gamma_i^-) \,,$$
so
$$ \nabla f(\gamma_{i-1},\gamma_i)-\nabla f(\gamma_{i},\gamma_{i+1})  \le 2f(\gamma_i)-f(\gamma_{i+1})-f(\gamma_i^-)=|\Delta_\infty f(\gamma_i)| \, . $$
Therefore,  if $e_1,e_2$ are oriented edges along a greedy path where $e_1$ precedes $e_2$, then 
\begin{equation} \label{eq:greedgrad} \nabla f(e_1)-\nabla f(e_2) \leq \delta= \|\Delta_\infty f\|_1\,.
\end{equation}

Next, we separate two cases:

\noindent{\bf Case 1}. Suppose  that every connected component $W$ of $V$ is adjacent to a single vertex $b_W \in B$. 

Fix such a component $W$. 
Then $h(w)=h(b_W)$ for all $w \in W$. If $w_0 \in W \cup\{b_W\}$
and $w_1 \in W$ are neighbours with $f(w_1)>f(w_0)$, then we construct   a greedy path $w_0 \to w_1 \to \dots \to w_\tau$ for $f$, where $\tau$ is minimal such that $f(w_\tau) \le f(w_{\tau-1})$. We infer from  \eqref{eq:greedgrad} that
$$ \nabla f(w_0,w_1) \le  \nabla f(w_0,w_1)- \nabla f(w_{\tau-1},w_\tau) \le \delta \,.$$ 
For every vertex $w \in W$, summing the gradients of $f$ along a shortest path from $b_W$ to $w$ yields $f(w)-h(w)=f(w)-f(b_W) \le n\delta$. This verifies \eqref{eq:inftyto1}  in Case 1.

\medskip

\noindent{\bf Case 2}.
If some connected component  of $V$ is adjacent to two or more vertices in $B$, then there exist bridges in $G$.
Choose a bridge $\gamma^*=(\gamma_0\rightarrow \gamma_1\rightarrow \gamma_2\rightarrow \dots \rightarrow \gamma_\ell)$ where $h|_B$ has maximal slope (See Figure \ref{fig:maxslope}). Recall from Proposition \ref{pr:infinity harmonic2} that the gradient of $h$ along $\gamma^*$ is constant, that is,  
     \[ \forall k\in [1,\ell], \quad h(\gamma_k)-h(\gamma_{k-1})=\frac{h(\gamma_\ell)-h(\gamma_0)}{\ell} \,.\]    
Since $\gamma_0,\gamma_\ell\in B$, we have $f(\gamma_0)=h(\gamma_0)$ and $f(\gamma_\ell)=h(\gamma_\ell)$. 
    Let \begin{equation*}
        \beta:=\frac{h(\gamma_\ell)-h(\gamma_0)}{\ell} \, \ \text{and} \, \ \alpha:=\max_{k\in [1, \ell-1]} \{f(\gamma_k)-h(\gamma_k)\}\ \,. 
    \end{equation*}
    The main step of the proof will be showing that \begin{equation} \label{eq:alphadelta}
        \alpha\leq (\ell-1)\delta \,.
    \end{equation}
    
    By a simple pigeonhole argument \[\exists j\in [1,\ell-1]\  \text{s.t.} \  \left(f(\gamma_j)-h(\gamma_j)\right) - \left(f(\gamma_{j-1})-h(\gamma_{j-1})\right)\geq \frac{\alpha}{\ell-1}.\]  
    Since $h$ has constant gradient $\beta$   on the path $\gamma^*$, we have 
    \begin{equation}\label{eq:gradientlowerbound}
     f(\gamma_j)-f(\gamma_{j-1})\geq \frac{\alpha}{\ell-1}+ \beta \,.
    \end{equation}

    We now construct a greedy path for $f$ starting from the oriented edge $\gamma_{j-1}\rightarrow \gamma_j$: we set $\gamma^0_j:=\gamma_j$ and for $k\geq 0$, we let $\gamma^{k+1}_j := (\gamma_j^k)^+$. 
    The path continues for $\tau$ steps, where
    \begin{equation} \label{eq:deftau} 
    \tau:=\min\{k \ge 1: \, f(\gamma_j^k)\leq f(\gamma_j^{k-1})   \quad \text{or}\quad \gamma_j^k\in B \} \,.
    \end{equation}

        \begin{figure}[htbp]
    \centering
    \begin{subfigure}[t]{0.5\textwidth}
        \includegraphics[width=\textwidth]{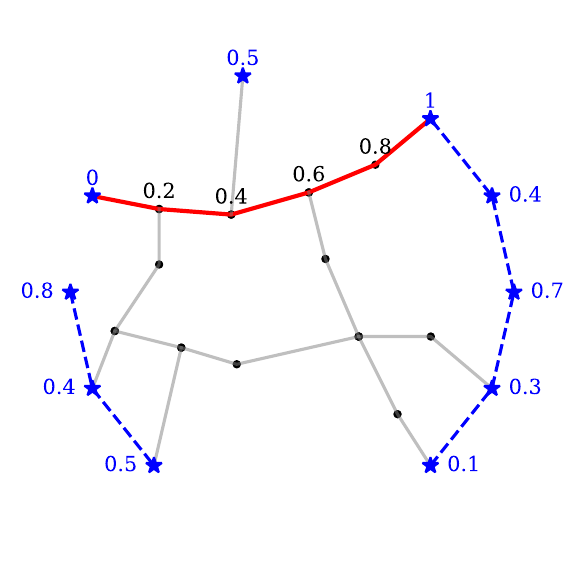}
        \vspace*{-17mm}
        \captionsetup{width=12cm}
        \caption{Boundary vertices are marked by stars. The bridge of maximal slope for $h$ is marked by the thick red line, and the values of $h$ on this bridge are shown.}
        \label{fig:maxslope}
    \end{subfigure}
    \begin{subfigure}[t]{0.5\textwidth}
        \includegraphics[width=\textwidth]{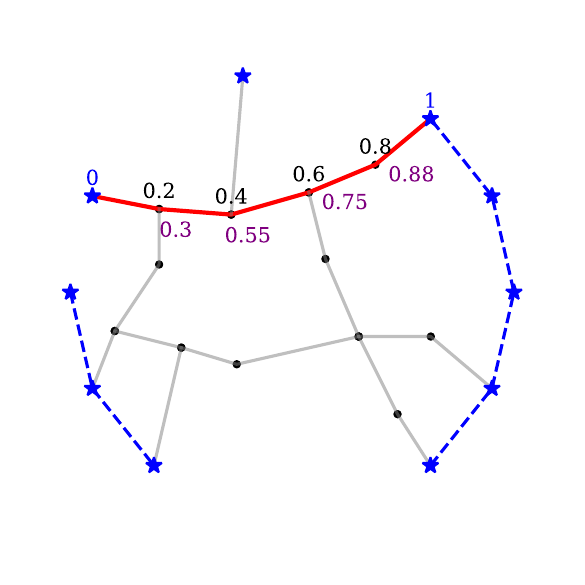}
        \vspace*{-17mm}
                \captionsetup{width=12cm}
        \caption{Values of $f$ on the bridge of maximal slope for $h$ are written below the vertices.}
        \vspace{-10mm}
        \label{fig:fvals}
    \end{subfigure}
        
    \begin{subfigure}[b]{0.5\textwidth}
        \centering
        \includegraphics[width=\textwidth]{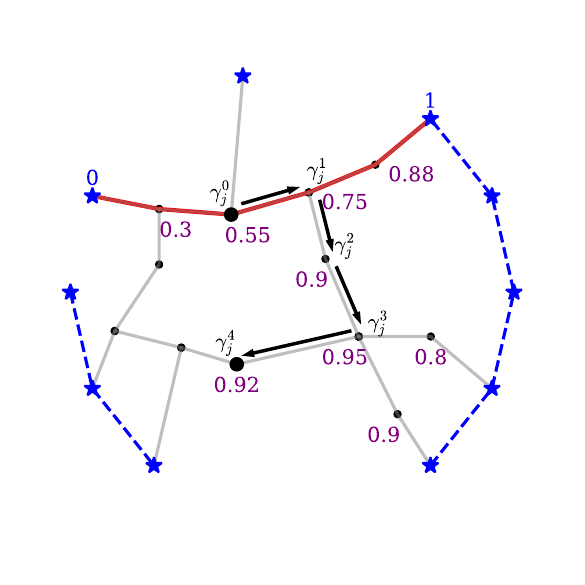} 
        \vspace*{-17mm}
                \captionsetup{width=12cm}
        \caption{The greedy path from $\gamma^0_j$, stopped at the first descent.}
        \label{fig:greedy1}
    \end{subfigure}
            \captionsetup{width=12cm}
    \caption{The construction used in the proof of Lemma \ref{lem:keylip}: we start with the bridge $\gamma^*$ of maximal slope for $h$ (\ref{fig:maxslope}), then   find an edge in $\gamma^*$  where $\nabla(f-h)$ is large (\ref{fig:fvals}). Finally, we build a greedy path for $f$ from that edge, stopping at the first descent or upon reaching the boundary (\ref{fig:greedy1}).}
    \label{fig:all_figures}
\end{figure}

The vertices 
$\{\gamma_j^k\}_{k=0}^{\tau-1}$ are all distinct, so $\tau \le |V|$.
      We will prove the inequality $\alpha \le (\ell-1)\delta$ separately in the two (overlapping) cases (see Figure \ref{fig:all_figures}):
    \begin{itemize}
    \item[(2a)]\label{cl:max} $f(\gamma_j^\tau)\leq f(\gamma_j^{\tau-1}) $, i.e.,  $f$ stopped increasing along the path;
    \item[(2b)] \label{cl:boundary} $\gamma_j^\tau\in B$.\\
\end{itemize}
 
    In case (2a),
    $ \nabla f(\gamma_j^{\tau-1},\gamma_j^\tau)\leq 0 \, $ and by \eqref{eq:gradientlowerbound}, \[\nabla f(\gamma_{j-1},\gamma_j)\geq \frac{\alpha}{\ell-1}.\] Since the path $\gamma_{j-1} \to \gamma_j^0 \to \dots \to \gamma_j^\tau$ is greedy, \eqref{eq:greedgrad} implies that $\frac{\alpha}{\ell-1}\leq \delta$, as required. This concludes case (2a).

    Next, we consider case (2b), where $\gamma_j^\tau \in B$. Since 
    $\gamma_{0} \to \dots \to \gamma_j  \to \gamma_j^1  \dots \to \gamma_j^\tau$ is a bridge and the slope of $h$ along any bridge is at most $\beta$,  we obtain that
    \begin{equation} \label{eq:slopeh} h(\gamma_j^\tau)-h(\gamma_0)\leq (\tau+j) \beta \, .
    \end{equation}
    Since $h$ has constant gradient $\beta$ on $\gamma^*$, the inequality \eqref{eq:slopeh} implies 
    that 
    \[h(\gamma_j^\tau)-h(\gamma_j)\leq \tau \beta \, .\]
    Using  the fact that $f(\gamma_j^\tau)=h(\gamma_j^\tau)$, as $\gamma_j^\tau \in B$, and the inequality $f\geq h$, we infer that \begin{equation}
        f(\gamma_j^\tau)-f(\gamma_j)\leq \tau \beta \, .
    \end{equation}
    Therefore, there is some $k\leq \tau$ for which $f(\gamma_j^k)-f(\gamma_j^{k-1}) \leq  \beta $. 
    In view of \eqref{eq:gradientlowerbound}, we conclude that \[\nabla f(\gamma_{j-1},\gamma_j) - \nabla f(\gamma_j^{k-1}, \gamma_j^k) \geq \frac{\alpha}{\ell-1}\, ,\] which     implies  that the inequality
$ {\alpha} \le (\ell-1)\delta$  
    holds in case (2b) as well.  
\medskip
    Set $\widetilde{B}=B\cup \{\gamma_1,...,\gamma_{\ell-1}\}$, and let $\widetilde{h}$ be the infinity harmonic extension of $f$ from $\widetilde{B}$ to $B \cup V$. Then $h\leq \widetilde{h}\leq h+\alpha$ and  $f\geq \widetilde{h}$. We can now conclude that \begin{equation*}
        \|f-h\|_\infty \leq \alpha + \|f-\widetilde{h}\|_\infty\leq (\ell-1)\delta +\bigl(n-(\ell-1)\bigr)\delta=n\delta \,,
    \end{equation*}
    with the last inequality coming from \eqref{eq:alphadelta} and  the induction hypothesis. This completes Case 2 and proves the theorem. 
  
\end{proof}

    

{\bf{Proof of Theorem \ref{th:l infty with boundary simplified} (a) and (b)}.}
\begin{proof}
By the monotonicity property, it is enough to prove the theorem for the two functions $f_0$ and $g_0$, defined on $V \cup B$ as follows: $f_0(v)=g_0(v)=h(v)$ for $v \in B$, while $f_0(v)=1$  and $g_0(v)=0$ for $v\in V$. We will focus on the upper envelope $f_0$, as  the statement for $g_0$ can be inferred by observing that $1-g_0$ is an upper envelope for $1-h$ that agrees with it on $B$, and $1-h$ is infinity harmonic on $V$.  Note that $f_0$ is infinity superharmonic, and therefore so are $f_t$ for all $t>0$.

Suppose that $f:V\cup B \to \mathbb R$ is infinity superharmonic and  agrees with $h$ on $B$. Let $f^v$ be obtained from $f$ by updating at the vertex $v$. Then 
\begin{equation} \label{eq:linfty update v} 
\|f-h\|_1 - \|f^v-h\|_1 = f(v)-f^v(v) =\frac{|\Delta_\infty f(v)|}{2} \, .\end{equation}

Averaging over the choice of the update vertex $v$, we get
\begin{equation}
   \|f-h\|_1- \frac1n\sum_{v \in V}\|f^v- h\|_1 \geq   \frac{1}{2n}\|\Delta_\infty f\|_1 \geq \frac{1}{2n^3}\|f-h\|_1 \,,
\end{equation}
where the rightmost inequality follows from \eqref{eq:1to1}.
We deduce that
 
Rearranging and applying this to $ f_s$ in place of $f$  yields
$$\E\bigl[\, \|f_{s+1}- h\|_1 \,  \mid \,f_s \bigr]\leq \Bigl(1-\frac{1}{2n^3}\Bigr) \|f_s-h\|_1  \,.
$$
Taking expectations and iterating this inequality for $s \in [0,t)$, we obtain
\begin{equation*}
     \E[\|f_t- h\|_1]\leq \Bigl(1-\frac{1}{2n^3}\Bigr)^t \|f_0-h\|_1 \leq n \Bigl(1-\frac{1}{2n^3}\Bigr)^t \,,
\end{equation*}
since $0 \le f_0, h \le 1$ on $V$ and $f_0=h$ on $B$.
Part (a) of the theorem now follows, as $(1-x)\leq e^{-x}$. 

To prove part (b), note that 
\begin{equation} \label{eq:tailb}
    \Pr \Bigl( \|f_t-h\|_\infty > \epsilon \Bigr) \leq \Pr \Bigl( \|f_t-h\|_1 > \epsilon \Bigr)    \leq \frac{n}{\epsilon} \exp\Bigl(-\frac{t}{2n^3}\Bigr) \,,
\end{equation}
where the second inequality follows from part (a) and Markov's inequality.  
The RHS of \eqref{eq:tailb} is less than 1 if 
$$t>t^*_\epsilon:=2n^3 \log \frac{n}{\epsilon} \,.$$
Therefore,
\begin{equation*}
\begin{split}
\E[\tau^*(\epsilon)] &=
\int_0^\infty \Pr \Bigl( \|f_{\lfloor t \rfloor} -h\|_\infty > \epsilon \Bigr)  \,dt \le 
t^*_\epsilon+1+\int_{t^*_\epsilon+1}^\infty \frac{n}{\epsilon} \exp\Bigl(-\frac{t-1}{2n^3}\Bigr)\, dt \\[1ex]
&=t^*_\epsilon+1+\int_{t^*_\epsilon+1}^\infty    \exp\Bigl(-\frac{t-1-t^*_\epsilon}{2n^3}\Bigr)\, dt=t^*_\epsilon+1+2n^3=1+2n^3 \log \frac{ne}{\epsilon} \, .
\end{split}   
\end{equation*}

\end{proof}

\begin{proof}[\bf{Proof of Proposition \ref{prop:round robin lip boundary}}]
As in the previous proof, it suffices to consider an infinity superharmonic $f_0$ that agrees with $h$ on $B$ and takes value 1 on $V$. Then  all the functions $f_s$ for $s \ge 0$ are infinity superharmonic, and satisfy $f_{s+1} \le f_s$. The key observation is that if $t \le s<t+n$ and $w \ne v_r$ for $r \in [t+1,s]$, then $|\Delta_\infty f_{s}(w)| \ge |\Delta_\infty f_{t}(w)|$. 
Therefore, we can infer from \eqref{eq:linfty update v} that
\begin{eqnarray*}
    \|f_{t}-h\|_1 - \|f_{t+n}-h\|_1 = \sum_{s=t}^{t+n-1} \frac{|\Delta_\infty f_{s}(v_{s+1})|}{2} \geq \sum_{s=t}^{t+n-1} \frac{|\Delta_\infty f_{t}(v_{s+1})|}{2} = \frac{\|\Delta_\infty f_t\|_1}{2} \, .
\end{eqnarray*}

In conjunction with \eqref{eq:1to1}, this yields  
$$ \|f_{t+n}-h\|_1  \le \Bigl(1-\frac1{2n^2}\Bigr) \|f_{t}-h\|_1 \,.$$
Thus,
\begin{equation*}
     \|f_{t}- h\|_1\leq \|f_0-h\|_1 \exp\Bigl(\frac{-\lfloor t/n\rfloor}{2n^2}\Bigr) \,.
\end{equation*}
Since   $\|f_0-h\|_1 \le n$, the first claim of the theorem is proved. The claimed upper bound on  $\tau^*(\epsilon)$ follows readily.
\end{proof}






\section{Convergence rates for $1<p<\infty$ with no boundary}\label{s:lpbounds}

Our main objective in this section is to prove parts (a) and (b) of Theorem \ref{th:main lpbound} on energy decay and time to $\epsilon$-consensus under the $\ell^p$-energy minimization dynamics. 

Recall the function    
\begin{equation}
F(n,p,D):=n^{-\beta_p}(D/n)^{-\theta_p} =
 \begin{cases}
n^\frac{1-2p}{p-1}D^\frac{-1}{p-1} \, \ \, &1<p\leq 2 \, ,\\
n^{-3}D^\frac{p-3}{p-1} \, \ \, &2< p <3 \, ,\\
n^{-3} \, \ \, &p\geq 3 \, ,    \end{cases}
\end{equation}
defined in \eqref{defF}. Define the constant $c_p$:
\begin{equation}
    c_p: = \begin{cases}
         p2^\frac{-2}{p-1} \, \ \; &1<p\leq 2 \, , \\
         \frac{p}{80(p-1)} \, \ \; &2<p \, .\\
    \end{cases}
\end{equation}
Note that $c_p\geq \frac{1}{80}$ for all $p> 2$, and that $c_p\rightarrow 0$ as $p\downarrow 1$.

Let $g:V \rightarrow \R$ be a  non-constant function, and suppose that $g^\sharp$ is  obtained from $g$ by updating the value at some vertex. 
As noted in the introduction,  $\en_p(\gs) \leq \en_p(g)$. The next lemma gives a lower bound on the relative energy decrease.
\begin{lem}\label{le:main p}
    Let $g:V \rightarrow \R$ be any non-constant function, and let $\gs$ be the function after updating the value at a uniformly chosen random vertex. 
    Then \begin{equation}\label{eq:ratio}
        \E\left[\frac{\en_p(g)-\en_p(\gs)}{\en_p(g)}\right] \geq c_p F(n,p,D) \, .
    \end{equation}
\end{lem}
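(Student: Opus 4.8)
The plan is to reduce the claim to a lower bound on a sum of per-vertex energy decrements. For $v\in V$ write $\Phi_v(y):=\sum_{w\sim v}|y-g(w)|^p$, so that an update at $v$ replaces $g(v)$ by the unique minimiser $y_v^*$ of $\Phi_v$ and lowers the energy by $\Delta_v:=\Phi_v(g(v))-\Phi_v(y_v^*)\ge 0$. Since the update vertex is uniform and each edge contributes to $\Phi_u$ and $\Phi_w$ at both its endpoints,
\[
\E\bigl[\en_p(g)-\en_p(\gs)\bigr]=\frac1n\sum_{v\in V}\Delta_v,\qquad \sum_{v\in V}\Phi_v(g(v))=2\en_p(g).
\]
Thus \eqref{eq:ratio} is equivalent to $\sum_{v}\Delta_v\ge c_p\,n\,F(n,p,D)\,\en_p(g)$, and since $y_v^*$ is the minimiser I am free to replace it by any convenient trial value when bounding $\Delta_v$ from below.

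The first point to respect is that $\Delta_v$ is governed by the \emph{imbalance} of $g$ at $v$, namely $A_v:=\sum_{w\sim v}|g(v)-g(w)|^{p-1}\sign(g(v)-g(w))=\Phi_v'(g(v))/p$, and \emph{not} by the size of the gradients alone: a $p$-harmonic vertex has $A_v=0$ and $\Delta_v=0$ even if its incident gradients are large, because upward and downward pulls cancel. Hence one cannot bound $\Delta_v$ locally by $\max_{w\sim v}|\nabla g(v,w)|^p$; the decrease must be extracted from the \emph{non-constancy} of $g$. Concretely, I would obtain a pointwise estimate $\Delta_v\ge (\text{convexity modulus})$ via the convexity of $t\mapsto|t|^p$, and here the two regimes of $c_p$ appear. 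For $p\ge 2$ the function is uniformly convex, so a Clarkson-type inequality $\tfrac{|a|^p+|b|^p}{2}-\bigl|\tfrac{a+b}{2}\bigr|^p\ge c_p|a-b|^p$ bounds the decrease from below by $c_p$ times a $p$-th-power measure of the spread of the neighbouring values about $y_v^*$. For $1<p\le 2$ uniform convexity fails, so I would instead use the sub-optimal move shifting $g(v)$ half-way toward its steepest neighbour: the gain on that edge is a definite fraction of $|\nabla g|^p$, while the loss on the remaining edges is controlled by convexity, producing the constant $p\,2^{-2/(p-1)}$.

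The global step converts these local decrements into a multiple of $\en_p(g)$. The natural vehicle is a layer-cake/co-area argument over thresholds $\lambda\in(\min g,\max g)$: for each $\lambda$ the super-level set $\{g>\lambda\}$ is nonempty and not all of $V$, so connectivity forces a boundary vertex at the top of the set having only weakly-lower neighbours — exactly the configuration with no cancellation, where the downward imbalance, and hence $\Delta_v$, is genuinely positive. Integrating the harvested decrease against $\lambda$, bounding the number of edges by $|E|=nD/2$ and $\sum_v\deg(v)=nD$, and applying Hölder to pass between the per-vertex imbalances and the edge sum $\sum_e|\nabla g(e)|^p=\en_p(g)$, should yield $\sum_v\Delta_v\ge c_p\,n\,F(n,p,D)\,\en_p(g)$. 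The bookkeeping of the powers of $n$ and $D$ in this Hölder step is what produces the exponents $\beta_p,\theta_p$ and, in particular, the transition at $p=2$.

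The main obstacle is degree dilution. If a down-gradient $G$ sits at a vertex of degree $d$ whose other neighbours are level with $g(v)$, then solving $\Phi_v'(y_v^*)=0$ shows the update moves $g(v)$ only by $\approx G\,d^{-1/(p-1)}$, so $\Delta_v\approx c_p\,d^{-1/(p-1)}G^p$; a single vertex therefore gives only the lossy factor $D^{-1}$ rather than the sharp $D^{-1/(p-1)}$ demanded by $\theta_p$ for $p\le 2$. Overcoming this requires harvesting the decrease from a whole family of boundary vertices across all thresholds and matching the aggregate to $\en_p(g)$ \emph{without} degrading the exponent — i.e.\ proving a sharp nonlinear Poincaré-type inequality uniform over all connected graphs with $|V|=n$ and average degree $D$. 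Making the Hölder/co-area comparison tight is the delicate point, and it is precisely what forces the different trial moves and the case split in the definition of $c_p$.
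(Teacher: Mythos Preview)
Your outline identifies the right ingredients---per-vertex decrement controlled by the imbalance $A_v$, a co-area/level-set mechanism, and a final H\"older step---but it stops short of a proof at exactly the places that carry the content. Two concrete gaps:

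\textbf{(1) The local decrement.} Your suggested bounds are not sharp enough to close the argument. For $p>2$ a Clarkson-type inequality bounds the drop by the \emph{spread} of the neighbouring values, but what is actually needed is a bound in terms of the imbalance $\rho_v=|A_v|p$ \emph{and} the total pull $R_v=\sum_{w\sim v}|g(v)-g(w)|^{p-1}$: the paper proves (Claim~\ref{cl:p improve}) that the drop is at least $\tilde c_p\,\rho_v^{2}\,R_v^{(2-p)/(p-1)}\,d_v^{-1/(p-1)}$, obtained by moving $g(v)$ by a carefully chosen amount $\Lambda^*$ and controlling the second derivative of $\Phi_v$ via H\"older. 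For $1<p\le 2$ the right form is $\tilde c_p\,\rho_v^{p/(p-1)}d_v^{-1/(p-1)}$, derived not by a half-way trial but by integrating the second-derivative bound $\int_0^\Lambda\en_p''\le p2^{2-p}\Lambda^{p-1}d_v$. Without these specific shapes the later H\"older step cannot produce the correct exponents.

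\textbf{(2) The global bridge.} Your co-area idea---pick one boundary vertex per threshold---is far too weak: it would lose a factor of $n$ (or worse) in the degree. What the paper does is order the vertices by $g$-value, normalise so $\sum_v\rho_v=1$, and telescope to get $\sum_{i\le k<j,\ i\sim j}|\nabla g|^{p-1}\le 1$ for every cut $k$. Summing over $k$ gives $\sum_{i\sim j}|j-i|\,|\nabla g|^{s}\le n$ for all $s\ge p-1$, which simultaneously yields $\en_p(g)\le n$ and---this is the step you are missing entirely---via \emph{Chebyshev's sum inequality} applied at each vertex, the estimate $\sum_v d_v R_v\le 9n$. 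That last inequality is the crucial denominator bound which, combined with the local forms of $I_v$ above and one more H\"older (Cauchy--Schwarz for $p>2$, plain H\"older for $p\le 2$), closes the proof with the right powers of $n$ and $D$. Nothing in your layer-cake sketch produces a substitute for $\sum_v d_v R_v\le Cn$, and without it the ``delicate point'' you flag cannot be resolved.
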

For ease of notation, we identify the vertices of $G$ with the set $\{1,2,\cdots,n\}$ and write $g_i=g(i)$. We may assume, without loss of generality, that the vertices are ordered so that $g_1\geq g_2\geq \cdots\geq g_n$. 

Define the local $\ell^p$-energy of $g$ at a vertex $i$ by  
\begin{equation}
    \en_{i,p}(g) = \sum_{j: j\sim i} |g_i-g_j|^p \,.
\end{equation}
The (total) energy is
\begin{equation}
\en_p(g) = \frac12 \sum_i \en_{i,p}(g) \, .
\end{equation}

To show \eqref{eq:ratio},  we define
\begin{eqnarray} \label{eq:Rplus}
    R_i^+&:=&\sum_{j<i:j\sim i}(g_j-g_i)^{p-1} \,,\\ \label{eq:Rminus}
    R_i^-&:=&\sum_{j>i: j\sim i}(g_i-g_j)^{p-1}\, ,\\
    \rho_i&:=&|R_i^+ - R_i^-| \,, \\ R_i&:=&R_i^+ + R_i^- \quad  \text{and} \quad r_i:=R_i^\frac{1}{p-1} \,.
\end{eqnarray}

The derivative of $\en_p(g)$ with respect to $g_i$ is $p(R_i^- - R_i^+)$.
One can think of $pR_i^+$ as the ``upward pull" of the neighbours of $i$ on $g_i$, and of $pR_i^-$ as the ``downward pull".

Denote by $d_i$ the degree of the vertex $i$, write $\delta_i:={d_i^\frac{1}{p-1}}$, and define \begin{equation}\label{eq:ii}
    I_i= \begin{cases} 
     \rho_i^\frac{p}{p-1} \delta_i^{-1}  \; \quad &1<p\leq 2 \, ,\vspace{1ex}\\ 
     \rho_i^2\, r_i^{2-p} \, \delta_i^{-1}\; \quad  &p> 2\, .    
    \end{cases}
\end{equation}

Next, we give  a lower bound on the  energy decrease when vertex $i$ is updated.
\begin{claim}\label{cl:p improve}
If vertex $i$ is updated, then the energy decrease $\en_p(g)-\en_p(g^\#)$ is at least $\tilde{c}_p I_i$, where $\tilde{c}_p = c_p$ for $1<p\leq 2$ and $\tilde{c}_p=10c_p$ for $p> 2$. 
\end{claim}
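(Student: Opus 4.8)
The plan is to reduce the statement to a one-dimensional convex optimization and then exploit the smoothness of $t\mapsto |t|^p$, which is genuinely different in the two regimes $1<p\le 2$ and $p>2$. Updating the value at $i$ changes only the edges incident to $i$, so $\en_p(g)-\en_p(\gs)=\Psi(g_i)-\Psi(y^*)$, where $\Psi(y):=\sum_{j\sim i}|y-g_j|^p$ and $y^*=\argmin_y\Psi(y)$. Since $y^*$ is the exact minimizer, for \emph{every} $s$ we have $\Psi(g_i)-\Psi(y^*)\ge \Psi(g_i)-\Psi(g_i-s)$, so it suffices to exhibit one good test step $s\ge 0$. Writing $a_j:=g_i-g_j$, the first-order term is governed by $\sum_{j\sim i}|a_j|^{p-1}\sign(a_j)=R_i^--R_i^+$; after possibly replacing a downward step by an upward one (the two cases are symmetric), we may assume $R_i^-\ge R_i^+$, so this sum equals $\rho_i\ge 0$ and we take $s\ge 0$.

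For $1<p\le 2$ I would use the power-type smoothness of $|t|^p$: there is a constant $C_p$ (one checks $C_p\le 2^{2-p}$ suffices) with $|a-s|^p\le |a|^p-ps|a|^{p-1}\sign(a)+C_p s^p$ for all $a\in\R$, $s\ge 0$. Summing over $j\sim i$ gives
\[
\Psi(g_i)-\Psi(g_i-s)\ \ge\ p\rho_i s-C_p d_i s^p .
\]
Optimizing the right-hand side at $s=(\rho_i/(C_pd_i))^{1/(p-1)}$ yields a decrease of at least $(p-1)C_p^{-1/(p-1)}\rho_i^{p/(p-1)}\delta_i^{-1}=(p-1)C_p^{-1/(p-1)}I_i$, and a direct check shows $(p-1)C_p^{-1/(p-1)}\ge p\,2^{-2/(p-1)}=c_p$, the required bound in this regime.

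For $p>2$ the function $|t|^p$ is no longer uniformly smooth, so I would replace the $s^p$ remainder by a second-order one. Taylor's theorem together with the monotonicity of $|x|^{p-2}$ gives $|a-s|^p\le |a|^p-ps|a|^{p-1}\sign(a)+\tfrac{p(p-1)}2 s^2(|a|+s)^{p-2}$, and summing over neighbours yields
\[
\Psi(g_i)-\Psi(g_i-s)\ \ge\ p\rho_i s-\frac{p(p-1)}2\,s^2\sum_{j\sim i}(|a_j|+s)^{p-2}.
\]
The key structural estimate is the H\"older bound $\sum_{j\sim i}|a_j|^{p-2}\le \bigl(\sum_{j\sim i}|a_j|^{p-1}\bigr)^{(p-2)/(p-1)}d_i^{1/(p-1)}=r_i^{p-2}\delta_i$; combined with sub/super-additivity of $t\mapsto t^{p-2}$ this controls $\sum_{j\sim i}(|a_j|+s)^{p-2}$ by a bounded multiple of $r_i^{p-2}\delta_i+d_is^{p-2}$. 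Choosing $s$ to be a small constant multiple of $s_0:=\rho_i r_i^{2-p}\delta_i^{-1}$ balances the gain $p\rho_i s_0=pI_i$ against the first penalty term, while the residual penalty $d_is^p$ is dominated because $\rho_i\le R_i=r_i^{p-1}$ forces $d_is^p/I_i\le (\rho_i/R_i)^{p-2}\le 1$ (up to the chosen constant). Tracking the constants gives a decrease of at least $\tfrac{p}{8(p-1)}I_i=10c_pI_i$.

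The main obstacle is the $p>2$ case: unlike the uniformly smooth regime, one cannot use a single global smoothness constant, and the second-order remainder must be controlled along the \emph{entire} step $[g_i-s,g_i]$, where the distances to the ``uphill'' neighbours grow. The two facts that make this work are the H\"older estimate for $\sum_{j\sim i}|a_j|^{p-2}$ and the elementary inequality $\rho_i\le R_i$; the otherwise mysterious constant $1/80$ in $c_p$ is simply the outcome of optimizing the test step $s$ and bookkeeping these bounds.
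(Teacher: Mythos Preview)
For $1<p\le 2$ your argument is essentially the paper's, just phrased as a pointwise smoothness inequality rather than as an integrated second-derivative bound; the two are equivalent and yield the same constant.

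For $p>2$ there is a real gap. Your Taylor bound $|a-s|^p\le |a|^p-ps|a|^{p-1}\sign(a)+\tfrac{p(p-1)}{2}s^2(|a|+s)^{p-2}$ is fine, and the Hölder estimate $\sum_{j\sim i}|a_j|^{p-2}\le r_i^{p-2}\delta_i$ is the right structural input. The problem is the next step: ``sub/super-additivity of $t\mapsto t^{p-2}$'' gives $(|a_j|+s)^{p-2}\le C_p'\bigl(|a_j|^{p-2}+s^{p-2}\bigr)$ with $C_p'=\max(1,2^{p-3})$, so for $p>3$ your ``bounded multiple'' is $2^{p-3}$, not an absolute constant. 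Carrying this through, the best decrease your argument can certify is of order $\tfrac{p}{2^{p-2}(p-1)}I_i$, which falls short of the claimed $\tfrac{p}{8(p-1)}I_i$ once $p>5$. The assertion ``tracking the constants gives $\tfrac{p}{8(p-1)}I_i$'' is therefore not justified for large $p$.

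The paper avoids this loss by not bounding $|g_i(\lambda)-g_j|$ by $|a_j|+s$ at all. Instead it applies Hölder \emph{at each $\lambda$} to $\sum_{j\sim i}|g_i(\lambda)-g_j|^{p-2}$, obtaining the bound $r_i(\lambda)^{p-2}\delta_i$ where $r_i(\lambda)^{p-1}=\sum_{j\sim i}|g_i(\lambda)-g_j|^{p-1}$; the same Hölder inequality then gives the differential inequality $r_i'(\lambda)\le \delta_i$, whence $r_i(\lambda)\le r_i+\lambda\delta_i$. With the chosen step $\Lambda^*\le r_i/(4(p-1)\delta_i)$ one gets $(1+\tfrac{1}{4(p-1)})^{p-2}\le 2$ uniformly in $p$, and the constant $\tfrac{p}{8(p-1)}$ drops out cleanly. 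Equivalently, in your notation the correct replacement for the sub/super-additivity step is Minkowski's inequality in $\ell^{p-1}$: it gives $\bigl(\sum_{j\sim i}(|a_j|+s)^{p-1}\bigr)^{1/(p-1)}\le r_i+s\delta_i$ and hence, after Hölder, $\sum_{j\sim i}(|a_j|+s)^{p-2}\le (r_i+s\delta_i)^{p-2}\delta_i$ with no exponential prefactor. Making that single change repairs your argument.
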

\begin{proof}

We will show that there is some $y$ so that changing the value at $i$ from $g_i$ to $y$ results in the required decrease in the energy. The claim will follow since at an update the value chosen minimizes the energy.

Let $g_i(\lambda):=g_i \pm \lambda$, where we take $+\lambda$ if $R_i^+\geq R_i^-$, and $-\lambda$ otherwise. Let $\en_p(\lambda):=\en_p(g(\lambda))$, where in $g(\lambda)$ all coordinates except $g_i(\lambda)$ agree with $g$. Then the derivatives of $\en_p(\lambda)$ with respect to $\lambda$ satisfy
 
\begin{equation} \label{deriv0}
     \en_p'(0)  = -p \rho_i\,, 
\end{equation}
while 
\begin{eqnarray} \label{eq:second-der}
     \en_p''(\lambda)=p(p-1)\sum_{j:j\sim i}|g_i-g_j \pm \lambda|^{p-2} \,.
\end{eqnarray}
To lower bound   the  energy decrease when we replace $g_i$ by $g_i\pm \Lambda$,   we will derive an upper bound on $\en_p''(\lambda)$  for   $\lambda \in [0,\Lambda]$ and then use the identity 
 \begin{equation} \label{eq:newton}
      \en_p'(\Lambda)  = 
      \en_p'(0)+ \int_0^\Lambda  \en_p''(\lambda)d\lambda = -p\rho_i+\int_0^\Lambda  \en_p''(\lambda)d\lambda  \, .
 \end{equation}
 
    We divide the rest of the proof into two cases according to the value of $p$.\\
    
\noindent{\bf Case} $1<p\leq 2$:\\
    We use the easy inequality that for every $\Lambda>0$ and $a\in \R$, 
    \begin{equation}
            \int_0^\Lambda (p-1)|a \pm \lambda|^{p-2}d\lambda \leq 2 ( \Lambda/2)^{p-1} = 2^{2-p}\Lambda^{p-1} \,  
    \end{equation}
    (Since the maximum is achieved when $a=\mp \frac{\Lambda}{2}$).
    Summing the above inequalities over all $j\sim i$ with $a_j=g_i-g_j$, we obtain from \eqref{eq:second-der} that
\begin{equation}\label{eq:changeinderivative}
        \int_0^\Lambda  \en_p''(\lambda) \, d\lambda\leq p2^{2-p}\Lambda^{p-1}d_i \, .
    \end{equation}
     If we take 
    \begin{equation}
\Lambda^*:=\left(\frac{\rho_i}{2^{3-p}d_i}\right)^{\frac{1}{p-1}} \, ,
    \end{equation}
    then for all $\Lambda \in [0, \Lambda^*]$, the integral in \eqref{eq:changeinderivative} is at most $p\frac{\rho_i}{2}$, so \eqref{eq:newton}
    yields that $ \en_p'(\Lambda) \leq -p\frac{\rho_i}{2}$ for all $\Lambda \in [0,\Lambda^*]$, and therefore the energy decrease    when we replace $g_i$ by $g_i + \Lambda^*$ (if $R_i^+\geq R_i^-$) or by $g_i-\Lambda^*$ (if $R_i^+< R_i^-$), is at least \begin{equation}
       \Lambda^* \cdot p\rho_i/2 =\tilde{c}_pI_i \, ,
    \end{equation}
    where $\tilde{c}_p= p 2^{-\frac{2}{p-1}}$.
    This concludes the case $1 < p \leq 2$.
     \medskip
    
        \noindent{\bf Case $p > 2$}:\\
    Writing $R_i(\lambda):= \sum_{j:j\sim i}|g_i(\lambda)-g_j|^{p-1}$ and $r_i(\lambda)=R_i(\lambda)^{\frac{1}{p-1}}$, we have 
    \begin{equation}
         \en_p''(\lambda)=p(p-1)\sum_{j:j\sim i}|g_i(\lambda)-g_j|^{p-2}\leq p(p-1) r_i(\lambda)^{p-2} \delta_i  \, ,
    \end{equation}
    where the inequality follows from applying \hld with $\widetilde{p}=p-1$ and $\widetilde{q}=\frac{p-1}{p-2}$, and recalling that $\sum_{j:j\sim i} 1 =d_i$. Similarly,  
    \begin{equation}
         R_i'(\lambda)\leq (p-1)\sum_{j:j\sim i}|g_i(\lambda)-g_j|^{p-2}\leq (p-1)    r_i(\lambda)^{p-2} \delta_i \,.
    \end{equation}
Since $R_i'(\lambda)=(p-1)r_i(\lambda)^{p-2}r_i'(\lambda)$, we infer that
\begin{equation}
     r_i'(\lambda) \leq \delta_i \,,
\end{equation}
which implies that $r_i(\lambda)\leq r_i+\lambda \delta_i$. 
Thus, for  $\lambda \in [0,\Lambda]$, we have 
\begin{equation}\label{eq:second deriv p big}
     \en_p''(\lambda)\leq p(p-1)\delta_i  (r_i +\delta_i  \Lambda)^{p-2} \,.
\end{equation} 
    We now choose \begin{equation}
        \Lambda^*=\frac{\rho_i}{4(p-1)r_i^{p-2}\delta_i} \leq \frac{r_i}{4(p-1)\delta_i}  \, ,
    \end{equation} 
where  the inequality holds because $\rho_i\leq R_i=r_i^{p-1}$.
Then for all $\Lambda \in [0, \Lambda^*]$,
\begin{equation}\label{eq:Rit bound}
    ( r_i + \delta_i \Lambda )^{p-2}\leq r_i^ {p-2} \left[1+\frac{1}{4(p-1)} \right]^{p-2}\leq 2r_i^ {p-2}\, .
\end{equation}

Using \eqref{eq:Rit bound} together with \eqref{eq:second deriv p big} and the definition of $\Lambda^*$, we obtain that  
\begin{equation}
\forall \Lambda \in [0,\Lambda^*], \quad     \Bigl(\max_{\lambda\in [0,\Lambda]} \en_p''(\lambda)\Bigr)\Lambda\leq  p\frac{\rho_i}{2} \,,
\end{equation}
so by \eqref{deriv0},  
\begin{equation}
\forall \Lambda \in [0,\Lambda^*], \quad     \en_p'(\Lambda) \le \en_p'(0) +\Bigl(\max_{\lambda\in [0,\Lambda]} \en_p''(\lambda)\Bigr)\Lambda\leq -p\frac{\rho_i}{2} \,.
\end{equation}
Therefore, the decrease of the energy due to replacing $g_i$ with $g_i(\Lambda^*)$ is at least

\begin{equation}
    \Lambda^* p\frac{\rho_i}{2}=\frac{p}{8(p-1)}\frac{
    \rho_i^2}{r_i^{p-2} \delta_i} = \tilde{c}_p I_i \, ,
\end{equation}
where $\tilde{c}_p=\frac{p}{8(p-1)}$.
This finishes the case $p> 2$ and   completes the proof of the claim.

\end{proof}

\begin{proof}[ Proof  of Lemma \ref{le:main p}] 
To bound the ratio \eqref{eq:ratio} of the expected energy decrease to the current energy of $g$, we may 
multiply $g$ by a constant so that $\sum_i \rho_i=1$. For vertices  $i,j$ with $i \sim j$, we write  $$ \Delta_{ij}=|g_i-g_j| \,.$$
Recall the definitions of $R_i^+$ and $R_i^-$ from \eqref{eq:Rplus} and \eqref{eq:Rminus}. For fixed $k \le n$, changing the order of summation gives
\begin{equation} \label{eq:green}
\begin{split}
    \sum_{i\leq k}(R_i^- - R_i^+) &= \sum_{i\leq k} R_i^- -\sum_{j\leq k} R_j^+  \\
     &=\sum_{i\leq k}\sum_{  j>i \atop  j\sim i}\Delta_{ij}^{p-1} - \sum_{j\leq k}\sum_{   i<j \atop i\sim j}\Delta_{ij}^{p-1} = \sum_{i \le k}\sum_{j >k \atop j\sim i} \Delta_{ij}^{p-1} \, .
    \end{split} 
\end{equation}
Indeed, every term $\Delta_{ij}^{p-1}$ such that $i<j \le k$ and $i \sim j$ appears in the penultimate summation twice, with opposite signs.  
The LHS of \eqref{eq:green} is bounded by $\sum_i \rho_i = 1$, so the RHS is at most $1$ as well. In particular, this implies that all summands on the RHS are $\leq 1$, and therefore \begin{equation}\label{eq:allq}
    \sum_{i \le k}\sum_{j >k \atop j\sim i} \Delta_{ij}^{s}\leq 1 \,\ \quad  \forall s\geq p-1 \, .
\end{equation}
  Summing \eqref{eq:allq} over all $k$ yields
\begin{equation}\label{eq:sum e} 
\sum_{i <n}\sum_{j >i \atop j\sim i} (j-i) \Delta_{ij}^{s}\leq n \,\ \quad  \forall s\geq p-1 \, .
\end{equation}
By taking $s=p$, we obtain the following bound on the energy:
\begin{equation}\label{eq:energybound}
    \en_p(g) = \sum_{i <n}\sum_{j >i \atop j\sim i}   \Delta_{ij}^{p}\leq  
    \sum_{i <n}\sum_{j >i \atop j\sim i} (j-i) \Delta_{ij}^{p}\leq n \,.     
\end{equation}

We now proceed to derive a key estimate on the derivative of the energy.\\ For fixed $i$, 
\begin{equation}
    \sum_{j:j\sim i}|j-i|\Delta_{ij}^{p-1} = \sum_{ j>i: j\sim i}(j-i)\Delta_{ij}^{p-1}+ \sum_{ j<i:  j\sim i}(i-j)\Delta_{ij}^{p-1} \, .
\end{equation}
Writing $d_i^+ = \#\{j: j\sim i, j<i\}$ and $d_i^-=\#\{j: j\sim i,j>i\}$, we have
\begin{equation}
\sum_{ j>i: j\sim i}(j-i)\Delta_{ij}^{p-1}
      = d_i^-\left(\frac{1}{d_i^-}\sum_{ j>i: j\sim i}(j-i)\Delta_{ij}^{p-1}\right)  \,.
\end{equation}
To bound the right-hand side, we will apply Chebyshev's   ``other'' inequality (see, e.g., \cite{MR0944909} pp.\ 43)   to the uniform probability measure on $\{j>i: j\sim i\}$, using the fact that both $j-i$ and $\Delta_{ij}=g_i-g_j$  are non-decreasing  in $j$ for $j>i$. Thus
$$ \sum_{ j>i: j\sim i}(j-i)\Delta_{ij}^{p-1} \geq d_i^-\left( \frac{1}{d_i^-}\sum_{ j>i: j\sim i}(j-i) \right) \left(\frac{1}{d_i^-}\sum_{ j>i: j\sim i} \Delta_{ij}^{p-1}\right)\, .
$$
Since  $\displaystyle 
\sum_{j>i \atop j\sim i}(j-i)\geq \frac{(d_i^-)^2}{2}$, we deduce that for each vertex $i$,
\begin{equation}
    \sum_{ j>i: j\sim i}(j-i)\Delta_{ij}^{p-1} \geq \frac{d_i^-}{2}R_i^- \, .
\end{equation}
Similarly, we obtain 
\begin{equation}
\sum_{ j<i: j\sim i}(i-j)\Delta_{ij}^{p-1} \geq    \frac{d_i^+}{2}R_i^+ \, .
\end{equation} 
Since $d_i^+ + d_i^- = d_i$, we combine the two estimates to get 
\begin{equation}
   \forall i, \quad  \sum_{j:j\sim i}|j-i|\Delta_{ij}^{p-1} \geq \frac{d_i}{2}\min(R_i^+,R_i^-) \, .
\end{equation}
Summing over $i$ and using   \eqref{eq:sum e}, we obtain that
\begin{equation}
    \sum_i d_i\min(R_i^+,R_i^-)\leq 4n.
\end{equation}
Since $R_i=R_i^+ + R_i^-=2\min(R_i^+,R_i^-)+\rho_i$, it follows that
\begin{equation}\label{eq:sumridi}
    \sum_i R_id_i\leq 8n+\sum_i\rho_i d_i \leq 9n \, .
\end{equation}

Denote the energy decrease in one update by $\Delta(\en_p):= \en_p(g)-\en_p(g^\#) $. Recall that our goal is to compare  $\E[\Delta(\en_p)]$
to the total energy $\en_p(g)$. 
We will use the bound $\en_p(g)\leq n$ from \eqref{eq:energybound}, while
our lower bound for $\E[\Delta(\en_p)]$ will depend on $p$.
By Claim \ref{cl:p improve}, we have 
\begin{equation}\label{eq:lower delta energy} \E[\Delta(\en_p)] \geq \frac{\tilde{c}_p}{n} \sum_i I_i \,,
\end{equation}
where $I_i$ were defined in \eqref{eq:ii}. We once again consider several cases.\\ 
\smallskip

\textbf{Case} $1<p\leq 2$: 

In this case,
\begin{equation}
     \sum_i I_i= \sum_i \rho_i^\frac{p}{p-1} d_i^\frac{-1}{p-1}\geq  \frac{\left(\sum_i \rho_i\right)^\frac{p}{p-1}}{\left(\sum_id_i\right)^\frac{1}{p-1}} \, ,
\end{equation}
where the inequality comes from applying \hld to $d_i^\frac{1}{p}$  and $\rho_i d_i^{-\frac{1}{p}}$,   with exponents $p$ and $q=\frac{p}{p-1}$.

Using our normalization that $\sum_i \rho_i=1$ and writing $D$ for the average degree in $G$, we get
\begin{equation}
\E[\Delta(\en_p)] \geq \frac{\tilde{c}_p}{n}\sum_i I_i \geq \tilde{c}_p n^{-\frac{p}{p-1}}D^{-\frac{1}{p-1}} \, . 
\end{equation}
Using \eqref{eq:energybound}, we conclude that
\begin{equation}\label{eq:p between one and two}
\E\Bigl[\frac{\Delta (\en_p)}{\en_p(g)}\Bigr]\geq \tilde{c}_p n^\frac{1-2p}{p-1}D^\frac{-1}{p-1}=c_p F(n,p,D)\, .
\end{equation}  \\
\textbf{Case}  $p> 2$:\\
By \eqref{eq:ii},
\begin{equation}\label{eq:deltabound}
     \sum_i I_i =  \sum_i {\rho_i^2}r_i^{2-p}\delta_i^{-1}  \geq  
     \frac{(\sum_i \rho_i)^2}{\sum_i r_i^{p-2} \delta_i} = \frac{1}{\sum_i r_i^{p-2} \delta_i}  \, ,
\end{equation}
where the inequality follows from applying Cauchy-Schwarz to the two sequences $ \rho_i   \bigl(r_i^{2-p} \delta_i^{-1} \bigr)^{1/2}  $ and 
$\bigl(r_i^{p-2} \delta_i  \bigr)^{1/2}  $.

When $p\geq 3$, we have
\begin{equation}
   \sum_i r_i^{p-2} \delta_i
   =\sum_i R_i^\frac{p-2}{p-1}d_i^\frac{1}{p-1}
   \leq \sum_i (R_id_i)^\frac{p-2}{p-1} \leq 10n \, ,
\end{equation}
where the last inequality followed from \eqref{eq:sumridi}, together with the fact that for each $i$, either $0\leq R_id_i<1$ or $(R_id_i)^\frac{p-2}{p-1}\leq R_id_i$.

Combining the last inequality  with \eqref{eq:energybound},   \eqref{eq:lower delta energy} and \eqref{eq:deltabound}, we conclude that
\begin{equation}\label{eq:p bigger three}
   \E\Bigl[\frac{\Delta (\en_p)}{\en_p(g)}\Bigr]\geq \frac{\tilde{c}_p}{10}n^{-3} =c_p F(n,p,D) \, .
\end{equation}

The last subcase remaining is $2\leq p<3$. In this range,
applying \hld with $\widetilde{p}=\frac{p-1}{p-2}$ and $\widetilde{q}=p-1$, we obtain
\begin{equation} 
\begin{split}
    \sum_i r_i^{p-2} \delta_i 
    &= \sum_i (R_i d_i)^\frac{p-2}{p-1}d_i^\frac{3-p}{p-1} \le
\Bigl(\sum_i R_id_i\Bigr)^{\frac{p-2}{p-1}}\Bigl(\sum_id_i^{3-p}\Bigr)^\frac{1}{p-1} \\&\leq  (9n)^\frac{p-2}{p-1}n^{\frac1{p-1}}\Bigl(\frac1n\sum_i d_i^{3-p}\Bigr)^\frac{1}{p-1} \leq 9n  D^\frac{3-p}{p-1} \, .
\end{split}
\label{holder-jensen}
\end{equation}
 The penultimate inequality follows from \eqref{eq:sumridi}, and the final one follows from Jensen's inequality. Substituting the above bound into \eqref{eq:deltabound}, we obtain that 
\begin{equation}\label{eq:p between two and three} 
   \E\Bigl[\frac{\Delta (\en_p)}{\en_p(g)}\Bigr]\geq \frac{\tilde{c}_p}{9}n^{-3}D^\frac{p-3}{p-1}>c_pF(n,p,D) \, .
\end{equation}



Combining the three cases \eqref{eq:p between one and two}, \eqref{eq:p bigger three}, and \eqref{eq:p between two and three}, we get that for all $p>1$,
\begin{equation}
    \E\Bigl[\frac{\Delta (\en_p)}{\en_p(g)}\Bigr]\geq c_p F(n,p,D) \, .
\end{equation}
This completes the proof of Lemma \ref{le:main p}.
\end{proof}

{\bf{Proof of Theorem \ref{th:main lpbound} (a) and (b)}.}

\begin{proof}
Part (a) follows from Lemma \ref{le:main p} by induction. 
To prove part (b), suppose that $\osc(f_t)>\epsilon$. Then there is some edge $e$ with $\nabla f_t(e)\geq \epsilon/n$, and this implies that $\en_p(f_t)\geq (\epsilon/n)^p$.
On the other hand, since $\en_p(f_0) \le n^2$,  part (a) gives 
$$\E[\en_p(f_t)] \le n^2 \exp\bigl(-c_p F(n,p,D)t\bigr)\,,$$
so by Markov's inequality, 
$$\Pr\bigl(\en_p(f_t) >(\epsilon/n)^p \bigr) \le \frac{n^2 \exp\bigl(-c_p F(n,p,D)t \bigr)} {(\epsilon/n)^p}  \,. $$
The RHS is less than 1 if 
$$t>t_\epsilon:=\frac{\log(n^{2+p}\epsilon^{-p})}{c_pF(n,p,D)} \,.$$
Therefore,
\begin{equation}
\begin{split}
\E[\tau_p(\epsilon)] &=
\int_0^\infty \!\!\! \Pr(\osc(f_{\lfloor t \rfloor})>\epsilon) \,dt \le 
t_\epsilon+1+\int_{t_\epsilon+1}^\infty \!\! \!\!\exp\bigl(-c_p F(n,p,D)(t-1-t_\epsilon)\bigr) dt\\[1ex]
&=t_\epsilon+1+\frac1{c_p F(n,p,D)} 
\le \frac{(2+p)\log(n/\epsilon)}{c_pF(n,p,D)}
\,,
\end{split}   
\end{equation}
provided that $\epsilon \le 1/2$.

\end{proof}

\section{Lower bounds}\label{s:lowerbounds}
\subsection{The energy minimizing dynamics on the cycle}

In this subsection we analyze the $\ell^p$-energy minimizing dynamics on the cycle with arbitrary update sequence.
Note that on the cycle, the $\ell^p$-energy minimizing dynamics are the same for all $p>1$; in each step, the opinion of the chosen vertex is replaced by the average of the opinions of its two neighbours. The following theorem gives a lower bound for the consensus time on the cycle  which   shows that the dependence on $n$ in our upper bounds is tight for $p\geq 3$  (including $p=\infty$) up to a $\log n$ factor. 

\begin{thm} \label{thm:cycle1}
    Suppose that $4|n$. Consider the cycle $\mathrm{C}_n =\{0,1,2,\ldots,n-1\}$  with the initial profile $f_0\equiv 1_{v\geq n/2}$. Then for every update sequence $\{v_s\}_{s\geq 1}$ and every $t\leq \frac{n^3}{2048}$, we have $\osc(f_t)\geq \frac{1}{2}$.
\end{thm}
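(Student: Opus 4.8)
The plan is to exploit the linearity of the averaging dynamics on the cycle. Since each update replaces $f(v_s)$ by $\tfrac12\bigl(f(v_s-1)+f(v_s+1)\bigr)$, the profile evolves as $f_t=A_{v_t}\cdots A_{v_1}f_0$, where $A_v$ is the stochastic matrix that replaces row $v$ by $\tfrac12(e_{v-1}+e_{v+1})$ and fixes the other rows. Reading this product as a transition kernel gives the representation $f_t(v)=\Pr_v[X\in U]$, where $U=\{n/2,\dots,n-1\}$ is the set on which $f_0=1$ and $X$ is the endpoint of the following backward lazy walk from $v$: processing the updates in reverse order $s=t,t-1,\dots,1$, the token moves to a uniformly random neighbour whenever its current position equals $v_s$, and stays put otherwise. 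Thus $\osc(f_t)\ge f_t(v)-f_t(w)$ for any $v,w$, and it suffices to exhibit one vertex $v$ deep inside $U$ with $f_t(v)\ge 3/4$ and one vertex $w$ deep inside the complementary arc $D=\{0,\dots,n/2-1\}$ with $f_t(w)\le 1/4$.

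Each such walk is a fair $\pm1$ walk observed only at its move-times, so its signed displacement from the starting point is a martingale. Writing $K_t(v)$ for the (random) number of moves the walk from $v$ makes during the $t$ steps, the optional-stopping identity $\E[(X-v)^2]=\E[\#\{\text{moves}\}]$ applied at $T_d\wedge K_t(v)$ (where $T_d$ is the first time the walk reaches distance $d$) gives
\[
\Pr_v[\,\text{the walk from } v \text{ ever reaches distance } d\,]\le \frac{\E[K_t(v)]}{d^{2}}.
\]
Consequently a vertex $v\in U$ lying at distance at least $d$ from $D$ satisfies $f_t(v)\ge 1-\E[K_t(v)]/d^{2}$, and symmetrically a vertex $w\in D$ at distance at least $d$ from $U$ satisfies $f_t(w)\le \E[K_t(w)]/d^{2}$. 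Hence it is enough to find, in the ``deep'' parts of $U$ and of $D$, vertices whose expected number of moves is small.

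The crux is a global flux bound asserting that the total expected number of moves made by all $n$ walkers is controlled,
\[
\sum_{v}\E[K_t(v)]\le 2t .
\]
To establish it I track the dual mass $\eta_r(x):=\sum_v\Pr_v[\text{the walk from } v \text{ sits at } x \text{ after processing the first } r \text{ reverse updates}]$, i.e.\ the expected number of walkers at $x$. It starts from $\eta_0\equiv 1$, satisfies $\sum_x\eta_r(x)=n$ for all $r$, and each reverse step at $w_r:=v_{t-r}$ sends all mass at $w_r$ equally to its two neighbours (setting $\eta(w_r)=0$). With this bookkeeping $\sum_v\E[K_t(v)]=\sum_{r<t}\eta_r(w_r)$ is exactly the total displaced mass, and the claim is that this is at most $2$ per step on average. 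This is the main obstacle: the mass is \emph{not} pointwise bounded by $2$ (repeatedly feeding a site from both sides lets $\eta_r(x)$ exceed $2$), so the estimate must be obtained in amortized form, through a convex potential in $\eta_r$ whose net decrease dominates the displaced mass, with conservation $\sum_x\eta_r(x)=n$ ruling out sustained high flux.

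Finally the constants assemble cleanly. Take $d=n/8$. By the maximal inequality a vertex is \emph{bad} (its value may have moved by more than $1/4$ from its initial value) only if $\E[K_t(v)]>d^{2}/4=n^{2}/256$, so by the flux bound the number of bad vertices is at most $\dfrac{2t}{n^{2}/256}=\dfrac{512\,t}{n^{2}}$. Since $4\mid n$, the set of $v\in U$ at distance at least $n/8$ from $D$ has size about $n/4$, and likewise for $D$. When $t\le n^{3}/2048$ the number of bad vertices is below $n/4$, so there remain a good $v$ deep in $U$ with $f_t(v)\ge 3/4$ and a good $w$ deep in $D$ with $f_t(w)\le 1/4$, whence $\osc(f_t)\ge f_t(v)-f_t(w)\ge 1/2$. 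The argument makes no probabilistic assumption on $\{v_s\}$, so it applies to every update sequence.
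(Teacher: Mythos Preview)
Your duality/backward-walk representation and the martingale maximal inequality $\Pr_v[\text{reach distance }d]\le \E[K_t(v)]/d^2$ are both correct and match the paper's setup. The genuine gap is the ``global flux bound'' $\sum_v \E[K_t(v)]\le 2t$. You yourself flag it as the \emph{main obstacle}, observe correctly that $\eta_r(w_r)$ is not pointwise bounded by $2$, and then assert that some convex potential in $\eta_r$ will do the job---but you never exhibit one. None of the obvious candidates (e.g.\ $\sum_x\eta(x)^2$, whose one-step change is $(a+b)m-m^2/2$ when mass $m$ is fired with neighbouring masses $a,b$) yields a one-sided inequality dominating the displaced mass, and mass conservation alone does not rule out sustained flux above any constant without further structure. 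So as written the argument stops precisely at the step it needs.

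More importantly, the detour through all $n$ walkers and a pigeonhole is unnecessary. The paper applies the same backward fragmentation to a \emph{single} starting point $w=n/4$ and tracks two explicit potentials of that one walker's distribution $\mu_k$: the second moment $Q(\mu)=\sum_i i^2\mu(i)$ and the transport energy $\en(\mu)=\sum_{i,j}|i-j|\mu(i)\mu(j)$. One checks directly that each fragmentation step with fired mass $h_k=\mu_k(w_k)$ gives $Q(\mu_{k+1})-Q(\mu_k)=h_k$ and $\en(\mu_{k+1})-\en(\mu_k)=h_k^2$. If $\theta=\mu_t(B(w,r)^c)$ then (after reducing to updates inside $B(w,r)$) Cauchy--Schwarz yields
\[
(\theta r^2)^2 \le \Bigl(\sum_k h_k\Bigr)^2 \le t\sum_k h_k^2 \le t\cdot 2r,
\]
hence $\theta\le 1/4$ whenever $t\le r^3/32$. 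Taking $r=n/4$ gives $f_t(n/4)\le 1/4$ for $t\le n^3/2048$, and the symmetric bound at $3n/4$ finishes. This single-walker potential argument is what your martingale step is trying to reproduce; it delivers the escape bound directly at the chosen vertex and bypasses the flux bound entirely.
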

\begin{proof}
    Let $\{v_s\}_{s\geq 1}$ be an arbitrary update sequence. 

To analyze $f_t(w)$ we consider the following fragmentation process, which can be thought of as a dual of the averaging process. We initialize the process with $\mu_0=\delta_w$, and $\mu_{k+1}$ is obtained from $\mu_k$ by splitting the mass at $w_k$ equally among its neighbours. We let $w_k=v_{t-k}$ for $0\leq k \leq t-1$.

Using induction on $j$, we can verify that $f_t(w)= \sum_{v}\mu_j(v)f_{t-j}(v)$ for each
$j=0,\dots,t$. In particular, $$f_t(w)=\sum_v \mu_t(v)f_0(v).$$
We now apply the following result from \cite{paterson2009maximum}:
\begin{prop}\label{p:mass bound}
    If $\mu_t(B(w,r)^c)=\theta$, then $t\geq \theta^2 r^3/2$, where $B(w,r)$ is the open ball of radius $r$ in the graph distance. 
\end{prop}
For convenience we recall the short proof of the proposition.
\begin{proof}
    It suffices to consider the fragmentation process on $\Z$, with $w=0$. If some fragmentation steps were performed outside of $B(0,r)$, then removing the last of these steps will yield a new update sequence $\{\tilde{v}_s\}_{s\geq 1}$ for which $\tilde{\mu}_{t-1}(B(w,r)^c) \ge \theta$. 
 Iterating, we may assume that no fragmentation steps were performed outside of $B(0,r)$.
     Let $$Q(\mu):=\sum_{i\in \Z} i^2 \mu(i) \,\  \text{and} \,\  \en(\mu):=\sum_{i,j} |i-j|\mu(i)\mu(j).$$
     (Both sums converge as there are only finitely many non-zero terms).
    Then it is easily verified that
    $$Q(\mu_{k+1})-Q(\mu_k)=h_k \, \text{ and } \, \en(\mu_{k+1})-\en(\mu_k)=h_k^2 \, ,$$ where $h_k=\mu_k(w_k)$. 
    If $\theta = \mu_t(B(0,r)^c)$, then, using Cauchy-Schwarz, we get 
    \begin{equation*}
        \left(\theta r^2\right)^2 \leq \left(Q(\mu_t)-Q(\mu_0)\right)^2 = \left(\sum_{k=0}^{t-1}h_k\right)^2\leq t\sum_{k=0}^{t-1}h_k^2 \\
        = t\left(\en(\mu_t)-\en(\mu_0)\right) \leq 2rt.
    \end{equation*}
    In the last inequality, we used the fact that, by our assumption on the updates, no mass travelled beyond $[-r,r]$. 
    We conclude that $t\geq \theta^2 r^3/2$.
\end{proof}
We can now finish the proof of Theorem \ref{thm:cycle1}.
Using the Lemma, we deduce that for $t\leq (n/4)^3/32$, we have $\mu_t(B(\frac{n}{4},\frac{n}{4})^c)\leq \frac{1}{4}$, so $f_t(\frac{n}{4})\leq \frac{1}{4}$. Similarly $f_t(\frac{3n}{4})\geq \frac{3}{4}$, whence $\osc(f_t)\geq \frac{1}{2}$ for such $t$. 

    \end{proof}

\begin{figure}[h]  
    \centering
\begin{tikzpicture}
    \node[circle, fill, red, inner sep=1pt, label={[text=red]left:$0$}] (A) at (-3,0) {};
    \node[circle, fill, blue, inner sep=1pt, label={[text=blue]right:$1$}] (B) at (3,0) {};
    \node[draw=none, fill=none] (C) at (0,0.1) {$\vdots$};

    \node[inner sep=1pt, label={[text=red]left:$0$}] at (-1.8,0) {};
    \node[inner sep=1pt, label={[text=red]left:$0$}] at (-0.9,0) {};
    \node[inner sep=1pt, label={[text=blue]left:$1$}] at (2.3,0) {};
    \node[inner sep=1pt, label={[text=blue]left:$1$}] at (1.4,0) {};

    \foreach \i in {-3,-2,-1,1,2,3} {
        \draw[marks,nobreak,middots] (A) to[out=18*\i, in=180-18*\i] (B);
    }
\end{tikzpicture}

\caption{A graph consisting of $k$ parallel paths of length $L$ between two nodes, where $4|L$. This graph, with initial profile 0 on the left half and 1 on the right half, yields the lower bound in Theorem \ref{th:linfty no boundary simplified}.}
    \label{fig:lower necklace2}
\end{figure}
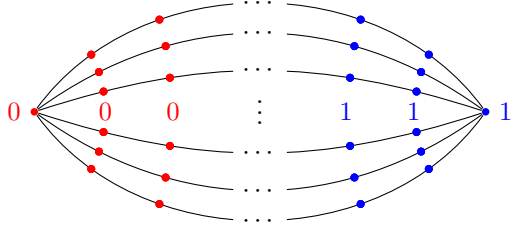 

The techniques used to analyze the dynamics on the cycle also yield tight diameter dependent lower bounds for Lipschitz learning:
\begin{cor} Suppose that $4|L$. 
    Consider the graph $G=(V,E)$ in Figure \ref{fig:lower necklace2} consisting of $k \ge 2$ paths of length $L$ that connect two vertices $a$ and $z$, so $n=|V|=k(L-1)+2$. Let $f_0$ take value $0$
on the left half of $G$ and $1$ on the right half. Then for any update sequence, we have $\tau_\infty(1/2) \ge cnL^2$ with $c= 2^{-11}$.
\end{cor}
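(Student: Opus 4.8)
The plan is to run, for suitably chosen target vertices, the same backward \emph{fragmentation} (dual) process used in the proof of Theorem~\ref{thm:cycle1}, and to exploit the fact that $G$ is built from $k$ internally disjoint paths in order to gain a factor of $k$ over the single-path bound. First I would note that the Lipschitz update \eqref{eq:liplearning} at any vertex $v$ sets $f^v(v)=\tfrac12 f(v^+)+\tfrac12 f(v^-)$, a $(\tfrac12,\tfrac12)$-average of the current argmax- and argmin-neighbours; at an internal (degree-$2$) path vertex these are simply its two neighbours. Hence, defining $\mu_0=\delta_w$ and splitting the mass at $w_k:=v_{t-k}$ equally between the two relevant neighbours gives, exactly as in Theorem~\ref{thm:cycle1}, the identity $f_t(w)=\sum_v \mu_t(v)f_0(v)$ (the split directions are fixed by the forward trajectory, so the backward process is linear in $\mu$ and $\mu_t$ remains a probability measure).

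Label the vertices of each path $P$ by positions $0,\dots,L$, with $0=a$ and $L=z$, and take $f_0\equiv 0$ on positions $<L/2$ and $f_0\equiv 1$ on positions $\ge L/2$. For each path $P$ set $w_P$ at position $L/4$ and $w_P'$ at position $3L/4$ (integers since $4\mid L$). The open balls $B(w_P,L/4)=\{1,\dots,L/2-1\}$ and $B(w_P',L/4)=\{L/2+1,\dots,L-1\}$ lie in the interior of $P$, avoid the hubs $a,z$ and the midpoint, and are pairwise disjoint across all $k$ paths. I would then show that on any path, reducing the gap $f_t(w_P')-f_t(w_P)$ below $\tfrac12$ forces many updates inside one of these balls. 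Indeed, if $\osc(f_t)<\tfrac12$ then $f_t(w_P')-f_t(w_P)<\tfrac12$, so either $f_t(w_P)>\tfrac14$ or $f_t(w_P')<\tfrac34$. In the first case, since $f_0=0$ on $B(w_P,L/4)$ and $\{f_0=1\}$ is disjoint from this ball, $f_t(w_P)=\mu_t(\{f_0=1\})\le \mu_t\bigl(B(w_P,L/4)^c\bigr)$, so more than $\tfrac14$ of the mass escaped the ball; the second case is identical after replacing $f$ by $1-f$, which also evolves by \eqref{eq:liplearning} and places $w_P'$ in its $0$-region.

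The main technical point, which I expect to be the chief obstacle, is to convert ``more than $\tfrac14$ of the mass escaped $B(w_P,L/4)$'' into the lower bound $(\tfrac14)^2(L/4)^3/2=L^3/2048$ on the number of updates performed \emph{inside} that ball, via Proposition~\ref{p:mass bound}. The difficulty is that the fragmentation process may split mass non-locally at the hubs $a,z$, where the update is not a nearest-neighbour average, so a priori the ball need not behave like an interval of $\Z$. I would resolve this precisely by the reduction inside the proof of Proposition~\ref{p:mass bound}: because $B(w_P,L/4)$ lies in the interior of $P$ and avoids $a,z$, every hub split is a step performed \emph{outside} the ball, and removing the last such step cannot decrease the final escaped mass. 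Indeed, the frozen mass then stays outside the ball (no later outside-ball step can move it) and contributes its full weight, whereas in the original process the same mass is split and at most its full weight ends up outside. Iterating leaves only inside-ball steps, which are ordinary $\pm1$ averages along the segment $P$ and hence embed isometrically in $\Z$, so Proposition~\ref{p:mass bound} yields the claimed count of inside-ball updates.

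Finally I would sum over paths. The chosen balls (one per path) are pairwise disjoint, and each receives at least $L^3/2048$ updates, so the total number of updates satisfies $t\ge kL^3/2048$. Since $n=k(L-1)+2\le kL$ for $k\ge 2$, this is at least $nL^2/2048$. Thus $\osc(f_t)<\tfrac12$ is impossible for $t\le 2^{-11}nL^2$, which is exactly $\tau_\infty(1/2)\ge cnL^2$ with $c=2^{-11}$.
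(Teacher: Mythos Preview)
Your proof is correct and follows essentially the same approach as the paper: both invoke the fragmentation process and Proposition~\ref{p:mass bound} on interior segments of the paths, and both exploit the disjointness of the $k$ path interiors to gain the factor $k$. The only difference is organizational: the paper applies pigeonhole to locate a single path $\gamma$ receiving at most $t/k$ interior updates and then shows directly that $f_t(\gamma_{L/4})<\tfrac14$ and $f_t(\gamma_{3L/4})>\tfrac34$ on that one path, whereas you argue by contradiction and sum the required inside-ball updates over all $k$ paths. Your treatment of the hub vertices (explicitly removing outside-ball steps before embedding in $\Z$) spells out a point the paper leaves implicit when it cites Proposition~\ref{p:mass bound}.
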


\begin{proof}
Fix  $t<cnL^2$. There is a simple 
path $\gamma=\{\gamma_0,\dots,\gamma_L\}$ connecting $a$ to $z$, so that among the first $t$ updates,  at most $t/k \le cL^3$ of them took place in $\{\gamma_1,\dots,\gamma_{L-1}\}$. Let $v:=\gamma_{L/4}$ be a vertex in the middle of the left half of $\gamma$, so $f_0$ vanishes on the ball $B(v,L/4)$ in $\gamma$.  
As in the first proof for the cycle, Proposition \ref{p:mass bound} implies that $f_t(v)< 1/4$ if 
$t/k<2^{-11}L^3$. Similarly, the vertex $w:=\gamma_{3L/4}$ in the right half of $\gamma$ satisfies $f_0(w) \ge 3/4$ for such $t$, so $\osc(f_t) >1/2. $
Thus $\tau_\infty(1/2) \ge cnL^2$ with $c= 2^{-11}$.
\end{proof}

\subsection{A second approach to the lower bound on the cycle}
In this subsection we give a second proof of the lower bound for the averaging dynamics on the cycle. This proof, while longer,  will be useful for obtaining lower bounds on more complicated graphs.
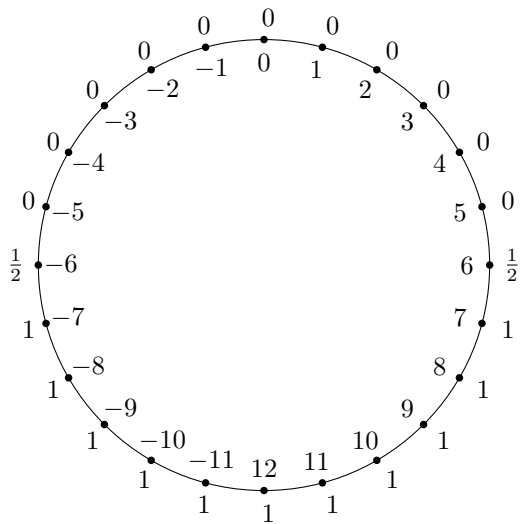
\begin{figure}
    \centering
    \begin{tikzpicture}
  \def\n{6} 
  \def\radius{3cm} 

  \draw (0,0) circle[radius=\radius];

  \foreach \i in {0,...,\numexpr4*\n-1} {
    \pgfmathtruncatemacro{\ii}{\i-2*\n+1};
    \pgfmathsetmacro{\angle}{90-\ii*360/(4*\n)};
    \pgfmathtruncatemacro{\label}{\ii};

    \node at (\angle:\radius) [circle, fill, inner sep=1pt] {};
    \node at (\angle:\radius - 0.3cm) {$\label$};
    \def\val {\ifthenelse{\ii=\n \OR \ii=-\n}{$\frac{1}{2}$}{ \ifthenelse{\ii > \n \OR \ii < -\n}{$1$}{$0$} }};
    \node at (\angle:\radius + 0.3cm) {\val};
  }
\end{tikzpicture}
    \caption{The cycle graph $C_{4n}$ for $n=6$ and the initial profile $f_0$ used in Theorem \ref{secondcycle}.}
    \label{fig:enter-label}
\end{figure}

\begin{thm} \label{secondcycle}
    Consider the cycle $C_{4n} = \{-2n+1,-2n+2,...,-1,0,1,\ldots,2n\}$ with initial profile $f_0(v):= \frac{1}{2}\mathfrak{1}_{|v|=n} + \mathfrak{1}_{|v|>n}$. Then for every update sequence $\{v_s\}_{s\geq 1}$ and every $t<\frac{n^3}{68}$, we have $\osc(f_t)\geq \frac{1}{2}$.
\end{thm}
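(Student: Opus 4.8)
The plan is to analyze each coordinate $f_t(w)$ through its dual \emph{fragmentation process}, exactly as in the proof of Theorem~\ref{thm:cycle1}, and then to exploit the symmetric placement of the two plateaus of $f_0$. For a fixed target vertex $w$ and horizon $t$, running the fragmentation $\mu_0=\delta_w$, with $\mu_{k+1}$ obtained by splitting the mass at $w_k:=v_{t-k}$ equally between its two neighbours, yields the representation $f_t(w)=\sum_v \mu_t(v) f_0(v)$. Since $0\le f_0\le 1$, the value $f_t(w)$ is controlled entirely by how much $\mu_t$-mass has escaped the region where $f_0$ is extremal.

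I would apply this to the two plateau centres $w=0$ and $w=2n$. The point of the chosen profile is that the open ball $B(0,n)$ is exactly $\{-n+1,\dots,n-1\}$, on which $f_0\equiv 0$, while the open ball $B(2n,n)$ is exactly $\{n+1,\dots,2n\}\cup\{-2n+1,\dots,-n-1\}$, on which $f_0\equiv 1$; the transition vertices $\pm n$, where $f_0=\tfrac12$, sit at distance exactly $n$ and are excluded. Writing $\theta_t:=\mu_t\bigl(B(w,n)^c\bigr)$ for the escaped mass, this gives $f_t(0)\le \theta_t$ (since $f_0\le 1$ off the ball and $f_0=0$ on it) and $f_t(2n)\ge 1-\theta_t$ (since $f_0\ge 0$ off the ball and $f_0=1$ on it), whence $\osc(f_t)\ge f_t(2n)-f_t(0)\ge 1-2\theta_t$.

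It then remains to bound the escaped mass, and here I would invoke the overhang estimate Proposition~\ref{p:mass bound} with radius $r=n$. Each ball of radius $n$ is a proper sub-arc of $C_{4n}$ (its antipode sits at distance $2n$), so the fragmentation near $w$ can be compared to the process on $\Z$, and the proposition yields $t\ge \theta_t^2 n^3/2$, i.e.\ $\theta_t\le \sqrt{2t/n^3}$. For $t< n^3/32$ this forces $\theta_t<1/4$ and hence $\osc(f_t)>1/2$; in particular the conclusion holds on the stated range $t< n^3/68$. The two estimates, at $w=0$ and at $w=2n$, are completely parallel, so the symmetry of $f_0$ under $v\mapsto -v$ could be used to halve the work.

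The only genuinely nontrivial ingredient is the overhang bound itself, which we are free to assume as Proposition~\ref{p:mass bound}: its content is that one-dimensional diffusion needs \emph{cubically} (in $r$) many asynchronous steps to move a constant fraction of the mass to distance $r$, and this $r^3$ rather than $r^2$ time scale is exactly what produces the $n^3$ lower bound. A secondary point to check is that the reduction to $\Z$ in Proposition~\ref{p:mass bound} is still valid on the cycle for $r=n$; this is immediate, since no mass can reach the antipodal arc before leaving $B(w,n)$, so the ``delete the steps taken outside the ball'' argument applies verbatim. An alternative and probably more robust route — likely the one needed later for the accordion and the $H_{d,n}$ graphs — is to pass to the edge-gradient process, under which an update at $v$ simply replaces the two gradients incident to $v$ by their common average; one can then run a second-moment argument directly on the positive gradient bump near $+n$, which remains separated from the negative bump near $-n$ throughout the time window of interest.
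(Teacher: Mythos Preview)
Your argument is correct and actually yields the conclusion on the slightly larger range $t<n^3/32$: the duality representation $f_t(w)=\sum_v \mu_t(v)f_0(v)$, the identification of $B(0,n)$ and $B(2n,n)$ with the two plateaus of $f_0$, and the application of Proposition~\ref{p:mass bound} with $r=n$ all go through as you describe. The reduction to $\Z$ is indeed harmless here because, after deleting updates outside the ball, the process on the arc $\{-n+1,\dots,n-1\}$ is literally identical to the one on $\Z$.

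However, this is precisely the route the paper already took for Theorem~\ref{thm:cycle1}. The whole point of the subsection containing Theorem~\ref{secondcycle} is to give a \emph{second} proof via a different mechanism that will survive the passage to the accordion graph, where the fragmentation/overhang argument does not apply (the dynamics there are not linear averaging on a path). The paper's proof dominates $f_0$ by the quadratic profile $F_0(v)=v^2/n^2$ on the segment $[-n,n]$ (set to $1$ outside), reduces via monotonicity to bounding $F_t(0)$ with the endpoints $\pm n$ frozen, and then tracks the concave envelope $h_t$ of $g_t:=F_t-F_0$. The key invariant is the arclength $\ell(h_t)$ of the graph of $h_t$: each update can raise $h_t$ by at most $1/n^2$ at one point, and since $|\Delta h_t|\le 1/n^2$ as well, a calculus lemma (Lemma~\ref{le: geometric lemma}) shows $\ell(h_{t+1})-\ell(h_t)\le 4/n^4$. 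Since $F_t(0)\ge 1/4$ forces $\ell(h_t)\ge 2n+\frac{1}{17n}$, this yields $t\ge n^3/68$.

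So: what you wrote proves the theorem, but it does not supply what the theorem is there to supply. Your closing remark about an ``edge-gradient'' alternative is pointing in the right direction, but the actual device the paper uses---and the one you should expect to reuse in Lemma~\ref{lem:half graph}---is the concave-envelope/arclength potential, not a moment argument on gradients.
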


The main step in the proof of Theorem \ref{secondcycle} is the following claim, proved below.

   \begin{claim}\label{cl:segment}
    Consider the averaging dynamics $\{F_t(v)\}$ on the line segment $S=\{-n,-n+1,\ldots,0,1,\ldots,n\}$, with the initial profile $F_0(v)=\frac{v^2}{n^2}$. Then  for every update sequence  $\{v_t\}$ that does not include the endpoints $\pm n$, and every $t<\frac{n^3}{68}$, we have $F_t(0)\leq \frac{1}{4}$.
    \end{claim}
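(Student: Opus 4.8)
The plan is to reduce the claim to a second-moment estimate for the dual fragmentation process, exactly as in the proof of Proposition~\ref{p:mass bound}, the only new feature being the treatment of the endpoints. Fix $t$ and the update sequence, and run the fragmentation process backward in time from the target vertex: set $\mu_0=\delta_0$ and obtain $\mu_{k+1}$ from $\mu_k$ by splitting the mass $h_k:=\mu_k(w_k)$ at $w_k:=v_{t-k}$ equally between its two neighbours. Because the update sequence never selects the endpoints $\pm n$, every split occurs at an interior vertex, so all the mass remains confined to $\{-n,\dots,n\}$ (it may pile up at $\pm n$, but it is never split there). The same induction on $j$ as in the cycle case then gives the duality $F_t(0)=\sum_v \mu_t(v)F_0(v)=n^{-2}Q(\mu_t)$, where $Q(\mu):=\sum_v v^2\mu(v)$; here I use that $F_0(v)=v^2/n^2$ for every $v$, including the endpoints.

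Next I would invoke the two local identities established inside the proof of Proposition~\ref{p:mass bound}: writing $\en(\mu):=\sum_{i,j}|i-j|\mu(i)\mu(j)$, each split increases $Q$ by $h_k$ and $\en$ by $h_k^2$. These identities are purely local and so are unaffected by the boundary. Summing over $k$ and applying Cauchy--Schwarz,
\[
Q(\mu_t)^2=\Bigl(\sum_{k=0}^{t-1}h_k\Bigr)^2\le t\sum_{k=0}^{t-1}h_k^2=t\bigl(\en(\mu_t)-\en(\mu_0)\bigr)=t\,\en(\mu_t),
\]
since $\en(\mu_0)=\en(\delta_0)=0$. Because $\mu_t$ is a probability measure supported on $\{-n,\dots,n\}$, any two of its atoms are at distance at most $2n$, whence $\en(\mu_t)\le 2n$. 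Therefore $Q(\mu_t)\le\sqrt{2nt}$ and
\[
F_t(0)=\frac{Q(\mu_t)}{n^2}\le\frac{\sqrt{2nt}}{n^2}.
\]

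Finally, for $t<n^3/68$ one has $\sqrt{2nt}<\sqrt{2n^4/68}=n^2/\sqrt{34}$, so $F_t(0)<1/\sqrt{34}<1/4$, as required. (In fact the same computation yields $F_t(0)\le1/4$ for all $t\le n^3/32$, so the stated threshold holds with room to spare.)

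The only genuinely new point relative to Proposition~\ref{p:mass bound} is the bookkeeping at the boundary: one must check that, since the endpoints are never updated, the dual mass stays inside $[-n,n]$ and the local increments of $Q$ and $\en$ continue to hold verbatim. Once this is settled the argument is a one-line Cauchy--Schwarz, so I anticipate no real difficulty; the step I would verify most carefully is the duality relation $F_t(0)=\sum_v\mu_t(v)F_0(v)$ in the presence of the absorbing endpoints, where mass accumulates at $\pm n$ but is never redistributed.
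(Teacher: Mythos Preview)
Your argument is correct and in fact gives the sharper bound $t\le n^3/32$. The duality $F_t(0)=\sum_v\mu_t(v)F_0(v)$ goes through verbatim because the endpoints are never updated, so $F_s(\pm n)=F_{s-1}(\pm n)$ for every $s$; the local increments $Q(\mu_{k+1})-Q(\mu_k)=h_k$ and $\en(\mu_{k+1})-\en(\mu_k)=h_k^2$ only use that the split vertex $w_k$ has both neighbours in the segment, which holds since $|w_k|<n$; and the diameter bound $\en(\mu_t)\le 2n$ is immediate. Nothing is missing.

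Your route is, however, genuinely different from the paper's. You are reusing the fragmentation/second-moment method of Proposition~\ref{p:mass bound} (the paper's \emph{first} approach to the cycle), whereas the proof of Claim~\ref{cl:segment} in the paper is deliberately the \emph{second} approach: it sets $g_t=F_t-F_0$, takes its concave envelope $h_t$, and controls the arc-length $\ell(h_t)$ via the geometric Lemma~\ref{le: geometric lemma}, gaining at most $4n^{-4}$ per step. The paper explicitly flags this trade-off: ``This proof, while longer, will be useful for obtaining lower bounds on more complicated graphs.'' Indeed, the concave-envelope/arc-length machinery is what drives Lemma~\ref{lem:half graph} for the accordion graph, where the dual fragmentation picture does not adapt cleanly (the anti-clique layers and the $p$-dependent update rule break the simple $Q$ and $\en$ identities). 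So your proof is shorter and gives a better constant for the segment itself, but the paper's proof is the one that survives the generalization needed in Section~\ref{s:lowerbounds} for $2<p<3$.
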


\begin{proof}[ Proof of Theorem \ref{secondcycle}]
By symmetry,  it is enough to prove that \begin{equation} \label{eq:proof68} f_t(0)\leq \frac{1}{4} \quad \text{for every}  \quad t<\frac{n^3}{68}\,.
\end{equation}
Extend the function $F_0$, defined in  Claim \ref{cl:segment}, to the cycle $C_{4n}$, by setting $F_0(v)=1$ for $v \in C_{4n}\setminus S$, and observe that $f_0 \le F_0$ on $C_{4n}$.   Update  $f_t$ and  $F_t$ using the same update locations, except that when $f_t$ undergoes an update at $\pm n$,  it is skipped for  $F_t$.
 The monotonicity property of the dynamics and the inequality $f_t \le 1$ yield by induction that $f_t \le F_t$ for all $t$.  Thus \eqref{eq:proof68}  follows from Claim \ref{cl:segment}.
\end{proof}

\begin{figure}
    \centering
    \begin{tikzpicture}[inner sep=0pt, outer sep = 0pt]
        \tikzstyle{every node}=[font=\large]
        \node [label=below:{$(-1,\phi(-1))$}] (A) at (0,0) {};
        \node[label=above:{$(1,\phi(1))$}] (B) at (8,6) {};
        \node (M) at (4,3) {};
        \node (C) at (4,4) {};
        \node (F) at (2.1,2.1) {};
        \node (G) at (2.2,2.2) {};
        \node (U) at (4,5) {};
        \draw [densely dashed] (A) -- (B);
        \draw (F) -- node[above=2pt] {$\phi$} ++(C);
        \draw (A) -- (G);
        \draw (B) -- (C);
        \draw (C) -- node[right=2pt] {$\gamma$} ++(M);
        \draw (C) -- node[right=2pt] {$\beta$} ++(U);
        \draw [dashed] (A) -- (U);
        \draw [dashed] (B) -- (U);
    \end{tikzpicture}
    \caption{The increase in length of the graph of the function $\phi$ when adding $\beta$ to the value at $0$ is bounded by $4\beta\max(\beta,\gamma)$ \,.}   \label{fig:geo}
\end{figure}
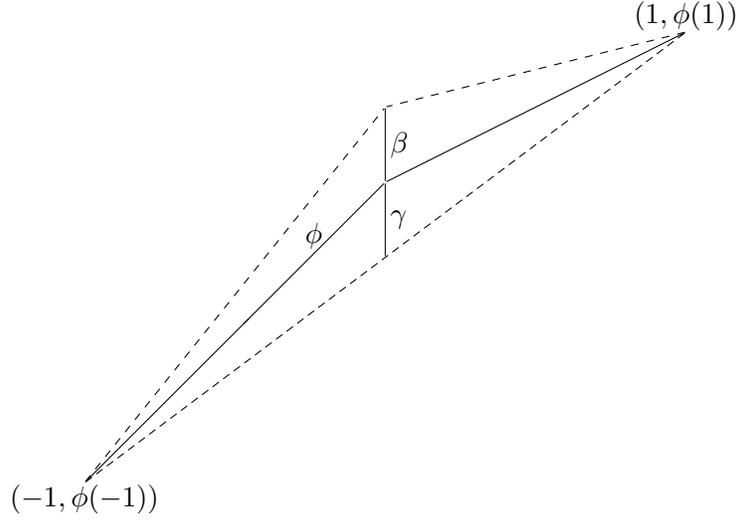

To prove Claim \ref{cl:segment} we will need the following elementary geometric fact (see Figure \ref{fig:geo}):
    \begin{lem}\label{le: geometric lemma}
   Let $\phi:[-1,1]\rightarrow \R$ be linear in each of the intervals $[-1,0]$ and   $[0,1]$. Suppose  that $\phi$   satisfies
   $$0 \le \gamma:= -\Delta \phi(0)=\phi(0)-\frac{ \phi(-1)+ \phi(1)}{2} \le \alpha\,.$$
   Given $\beta \in [0,\alpha]$, define the function $\phi_\beta:[-1,1]\rightarrow \R$ by setting $\phi_\beta(\pm 1)=\phi(\pm 1)$ and $\phi_\beta(0)=\phi(0)+\beta$, with linear interpolation in between. Then $$\ell(\phi_\beta)\leq \ell(\phi)+4\alpha\beta\,,$$
   where $\ell(\phi)$ is the length of the graph of $\phi$.
    \end{lem}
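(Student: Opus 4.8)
The plan is to reduce the statement to a one–variable calculus estimate by tracking how the graph length changes as the value at $0$ is raised continuously from $\phi(0)$ to $\phi(0)+\beta$. Write $s_1$ and $s_2$ for the slopes of $\phi$ on $[-1,0]$ and $[0,1]$ respectively; since each of these intervals has horizontal length $1$, we have $\ell(\phi)=\sqrt{1+s_1^2}+\sqrt{1+s_2^2}$, and the hypothesis $\gamma=\phi(0)-\tfrac{\phi(-1)+\phi(1)}2\ge 0$ translates into $s_1-s_2=2\gamma\ge 0$. Raising the value at $0$ by $t$ changes the two slopes to $s_1+t$ and $s_2-t$, so I would set $g(t):=\sqrt{1+(s_1+t)^2}+\sqrt{1+(s_2-t)^2}$, and observe that $\ell(\phi_\beta)=g(\beta)$ while $\ell(\phi)=g(0)$.

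The key device is the auxiliary function $u(x):=x/\sqrt{1+x^2}$, which is the vertical component of the unit tangent to a segment of slope $x$. A direct differentiation gives $g'(t)=u(s_1+t)-u(s_2-t)$, and since $u'(x)=(1+x^2)^{-3/2}\in(0,1]$, the map $u$ is strictly increasing and $1$-Lipschitz. Because $s_1+t\ge s_2-t$ for $t\ge 0$, monotonicity and the Lipschitz bound together give $0\le g'(t)\le (s_1+t)-(s_2-t)=2\gamma+2t$.

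Integrating this inequality from $0$ to $\beta$ then yields $\ell(\phi_\beta)-\ell(\phi)=\int_0^\beta g'(t)\,dt\le 2\gamma\beta+\beta^2$. Using $\gamma\le\alpha$ and $\beta\le\alpha$ bounds the right-hand side by $3\alpha\beta\le 4\alpha\beta$, which is in fact slightly stronger than claimed; the same estimate $2\gamma\beta+\beta^2\le 3\beta\max(\beta,\gamma)$ also recovers the sharper bound $4\beta\max(\beta,\gamma)$ advertised in the caption of Figure~\ref{fig:geo}.

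I do not expect any serious obstacle here, since everything is elementary once the right parametrization is chosen. The one point worth flagging is that a crude triangle-inequality comparison of the two piecewise-linear graphs only gives an increase of at most $2\beta$, which is far too weak for the intended application; the genuine gain comes precisely from differentiating and exploiting that $u$ is $1$-Lipschitz, so that the \emph{instantaneous} rate of length increase is governed by the slope difference $2\gamma+2t$ rather than by an order-one constant. Thus the main bookkeeping is simply to get the signs right—tracking which slope increases and which decreases, and noting that $\gamma\ge 0$ is exactly what forces $g'\ge 0$ throughout.
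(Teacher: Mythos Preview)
Your proof is correct and follows essentially the same approach as the paper: both parametrize the length as a function of the amount added at $0$, differentiate to get a difference of values of $u(x)=x/\sqrt{1+x^2}$, and use that $u$ is $1$-Lipschitz to bound the derivative by $2\gamma+2t$. The only cosmetic differences are that the paper writes the slopes as $m\pm x$ (with $m=\tfrac{\phi(1)-\phi(-1)}{2}$) and bounds $f(\gamma+\beta)-f(\gamma)$ by $\beta\cdot\sup f'$ rather than integrating, giving $2\beta(\gamma+\beta)\le 4\alpha\beta$ where you get the slightly sharper $2\gamma\beta+\beta^2\le 3\alpha\beta$.
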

    \begin{proof}
        Let $m=\frac{\phi(1)-\phi(-1)}{2}$.
        Writing $f(x)=\sqrt{(m+x)^2+1}+\sqrt{(m-x)^2+1}$, we get
        \begin{equation}\label{eq:length difference one step}
            \ell(\phi_\beta)-\ell(\phi) = f(\gamma+\beta)  -f(\gamma) \leq  \beta \sup_{\gamma\leq x\leq \gamma+\beta} {f'(x)} \, .
        \end{equation}
        Let $g(x)= \frac{m+x}{\sqrt{(m+x)^2+1}}$. We can write the derivative of $f$ as 
        \begin{equation}\label{eq:f derivative bound}
            f'(x)=\frac{m+x}{\sqrt{(m+x)^2+1}} - \frac{m-x}{\sqrt{(m-x)^2+1}} = g(x)-g(-x) \, .
        \end{equation}
        To bound the above difference, we once again take the derivative:
        \begin{equation}\label{eq:g derivative}
            g'(x)=\frac{1}{\Bigl((m+x)^2+1\Bigr)^{\frac{1}{2}}}-\frac{(m+x)^2}{\Bigl((m+x)^2+1\Bigr)^{\frac{3}{2}}}=\frac{1}{\Bigl((m+x)^2+1\Bigr)^{\frac{3}{2}}} \leq 1.
        \end{equation}
        Thus $g(x)-g(-x) \le 2x$ for $x>0$. This, together with \eqref{eq:f derivative bound}, implies that $$\sup_{\gamma\leq x\leq \gamma+\beta} {f'(x)}\leq 2(\gamma+\beta) \leq 4\alpha\,.$$
        The lemma now follows from \eqref{eq:length difference one step}.
        
    \end{proof}
 
\begin{proof}[Proof of Claim \ref{cl:segment}]
    First note that $F_0$ is convex and therefore $F_t$ is convex for all $t$. Also, $F_0$ was chosen so that for every $|x|<n$ , its discrete Laplacian satisfies
    \begin{equation}\label{eq:w_0 laplacian}
        \Delta F_0(v) := \frac{F_0(v+1)-2F_0(v)+F_0(v-1)}{2}=\frac{1}{n^2}.
    \end{equation}
    Let $g_t(v):=F_t(v)-F_0(v) = F_t(v)-\frac{v^2}{n^2}$. Note that $g_0\equiv 0$ and that $g_t(\pm n)=0$ for all $t$. A simple calculation now gives that for all $t \ge 1$, 
    \begin{equation}\label{eq:g increment}
    \begin{split}
        g_t(v_t)&=F_t(v_t)-F_0(v_t)= \frac{F_{t-1}(v_t-1)+F_{t-1}(v_t+1)}{2}-F_0(v_t)\\ &= \frac{g_{t-1}(v_t-1)+g_{t-1}(v_t+1)}{2}+\frac{F_0(v_t-1)+F_0(v_t+1)}{2}-F_0(v_t) \\ &= \frac{g_{t-1}(v_t-1)+g_{t-1}(v_t+1)}{2}+\frac{1}{n^2}\,.
        \end{split}
    \end{equation}

    By convexity of $F_t$ and \eqref{eq:w_0 laplacian}, for every $|v|<n$ and every $t$, we have
    \begin{equation}\label{eq:Delta g}
        \Delta g_t(v)= \Delta F_t(v)-\Delta F_0(v) \geq \frac{-1}{n^2} \,.
    \end{equation}

    We now take $h_t$ to be the concave envelope of $g_t$, that is, 
    \begin{equation*} \label{concenv}
      \forall t \ge 0, \quad   h_t(v):= \inf \{ h(v) \, : \, \, h \geq g_t \, \text{and} \, h \, \text{is concave}\} \,.
    \end{equation*}

    Note that for each $t$, the envelope $h_t$ coincides with $g_t$ on some subset of vertices  $A_t\subset \{-n,\ldots, n\}$ and linearly interpolates between these values. 
    Since $h_t\geq g_t$, we deduce that  $\Delta h_t(v)\geq \Delta g_t(v)$ for $v\in A_t$ and $\Delta h_t(v)=0$ elsewhere.
    The concavity of $h_t$ and \eqref{eq:Delta g} give that for all $t \ge 0$,  
    \begin{equation}\label{eq:Delta h}
        0\geq \Delta h_t \geq \frac{-1}{n^2} \,.
    \end{equation}
   
    Moreover, for all $t \ge 1$,  \begin{equation*}
        h_{t-1}(v_t)\geq \frac{g_{t-1}(v_t-1)+g_{t-1}(v_t+1)}{2}\, ,
    \end{equation*}
    so \eqref{eq:g increment} implies that
    \begin{equation}
        g_{t}(v_t)\leq h_{t-1}(v_t)+\frac{1}{n^2}\,.
    \end{equation}
Since $g_{t}(v) = g_{t-1}(v)\leq h_{t-1}(v)$ for all $v\neq v_t$, we get that
        $g_{t} \leq h_{t-1}+\frac{1}{n^2}$.
Therefore, \begin{equation}\label{eq: h increment}
    h_{t}\leq h_{t-1}+\frac{1}{n^2} \,.
\end{equation}
    Extend the function $h_t$ to a function from $[-n,n]$ to $[0,1]$ by linear interpolation. As before, let $\ell(h_t)$ denote the length of the graph of this function.  Recall that $h_0\equiv 0$, and therefore $\ell(h_0)=2n$. 
\begin{figure}
    \centering
\begin{tikzpicture}[x=1cm, y=0.4cm]
    \node[inner sep = 0mm, outer sep = 0mm] (A1) at (0,0) {};
    \node[inner sep = 0mm, outer sep = 0mm] (A2) at (1.5,3.5) {};
    \node[inner sep = 0mm, outer sep = 0mm] (A3) at (3,4.3) {};
    \node[inner sep = 0mm, outer sep = 0mm] (A4) at (4.5,4.8) {};
    \node[inner sep = 0mm, outer sep = 0mm] (B4) at (4.5,6.6) {};
    \node[inner sep = 0mm, outer sep = 0mm] (A5) at (6,3.6) {};
    \node[inner sep = 0mm, outer sep = 0mm] (A6) at (7.5,2) {};
    \node[inner sep = 0mm, outer sep = 0mm] (A7) at (8.5,0) {};
    
    \draw (A1) -- (A2);
    \draw (A2) -- (A3);
    \draw (A3) -- (A4);
    \draw (A4) -- (A5);
    \draw (A5) -- (A6);
    \draw (A6) -- (A7);
    \draw[very thick,blue, dashed] (A2) -- (B4);
    \draw[very thick,blue, dashed] (A6) -- (B4);
    \draw[very thick,red, densely dotted] (A3) -- (B4);
    \draw[very thick,red, densely dotted] (A5) -- (B4);
    \draw[->, very thick] (A4) -- (B4);
     \node at (A2) [below  ] {$a$};
    \node at (A6) [below ] {$b$};
    \node at (A4) [below ] {$v_t$};
\end{tikzpicture}
    \caption{Bounding the increase in length $\ell(h_{t+1})-\ell(h_t)$ using the triangle inequality
    and Lemma  \ref{le: geometric lemma} }\label{fig:triangle inequality}.  
\end{figure}
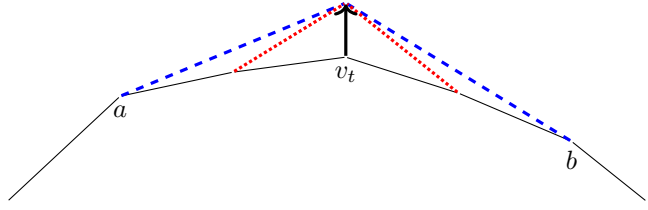

Consider the update done at time $t \ge 1$ at  $v_t$. 
Next, we will show that  
\begin{equation} \label{len-inc}
\ell(h_{t})-\ell(h_{t-1})\leq 4n^{-4} \,.
\end{equation}
    We may assume that $\beta:=g_{t}(v_t)-h_{t-1}(v_t)$ is positive, since otherwise $h_{t}=h_{t-1}$ on $[-n,n]$. 
    Set $\phi(x):=h_{t-1}(x+v_t)$ for $x \in [-1,1]$.
    The inequalities \eqref{eq:Delta h} and  \eqref{eq: h increment} ensure that the conditions of Lemma \ref{le: geometric lemma} are satisfied with $\beta \le \alpha:=\frac{1}{n^2}$.
    
    There exists $a<v_t$ such that $h_{t}$ is linear on the interval $[a,v_t]$
    and $$h_{t}(a)=g_{t}(a)=g_{t-1}(a)\,.$$ Similarly, there exists $b>v_t$ such that $h_{t}$ is linear on the interval $[v_t,b]$ and satisfies 
    $h_{t}(b)=g_{t}(b)=g_{t-1}(b)$. Thus, $h_{t}=h_{t-1}$ on 
    $[-n,a] \cup [b,n]$. By the triangle inequality (see Figure \ref{fig:triangle inequality}),
    \begin{equation*}
       \ell(h_{t}|_{[a,b]})  \leq   
    \ell(h_{t-1}|_{[a,v_t-1]})+\ell(\phi_\beta)+\ell(h_{t-1}|_{[v_t+1,b]}) \,.
    \end{equation*}
    Therefore, by Lemma \ref{le: geometric lemma}, 
    $$
\ell(h_{t}|_{[a,b]})-\ell(h_{t-1}|_{[a,b]}) \leq   
      \ell(\phi_\beta)-\ell(\phi) \le 4\alpha\beta \le 4n^{-4}\,,
      $$
  and we have verified \eqref{len-inc}.

    If $f_T(0)\geq \frac{1}{4}$ for some $T$, then
     $h_T(0)\geq \frac{1}{4}$, so
     \begin{equation}
        \ell(h_T)\geq 2\sqrt{n^2+(1/4)^2}\geq 2n+\frac{1}{17n}.
    \end{equation}
     Since $\ell(h_0)=2n$, we infer that $T\geq \frac{n^3}{68}$, and Claim \ref{cl:segment} follows.
\end{proof}

\subsection{Lower bound for $1<p\leq 2$ depending on average degree.} \label{s:p12lower}
We will prove the converse statement in Theorem \ref{th:main lpbound} in two parts. First, for each large $N$ we will exhibit a tree with at most $N$ vertices  (so the average degree is less than 2) for which the lower bound \eqref{eq:p D lower} holds.  Second, for some constant $D(p)$ and every $D >D(p)$, we will construct for each large $N$  a graph  with at most $N$ vertices and average degree at most $D$, for which \eqref{eq:p D lower} holds.

\begin{figure}[h]
\centering
    \begin{tikzpicture}
    \filldraw[black] (0,0) circle (0.05cm) node[anchor=north] {$0$};
    \foreach \s in {-1, 1} {
        \draw[thick] (0,0) -- (\s,0);
        \draw (\s*1.3, 0) node {$\cdots$};
        \draw[thick] (\s*1.6,0) -- (\s*2.6,0);
        \ifthenelse{\s=1}{\filldraw[black] (\s*2.6,0) circle (0.05cm) node[anchor=north east] {$n$}}{\filldraw[black] (\s*2.6,0) circle (0.05cm) node[anchor=north west] {$-n$}};
    }

    \foreach \sc in {-1, 1}
    {
        \foreach \s in {-1, 1}
        {
            \foreach \i in {1, ..., 4}
            {
                \pgfmathtruncatemacro\j{5-\i};
                \pgfmathtruncatemacro\jj{4-\i};
                \ifthenelse{\sc=-1}{\def\mylabel{\ifthenelse{\s=1}{$w_{\j}$}{\ifthenelse{\jj=0}{$w_{2n}$}{$w_{2n-\jj}$}}}}{\def\mylabel{\ifthenelse{\s=1}{$u_{\j}$}{\ifthenelse{\jj=0}{$u_{2n}$}{$u_{2n-\jj}$}}}}
                
                \ifthenelse{\sc=1}{\filldraw[black] (3.6*\sc, \s*\i/2) circle (0.05cm) node[anchor=west] {\mylabel} ;}{\filldraw[black] (3.6*\sc, \s*\i/2) circle (0.05cm) node[anchor=east] {\mylabel} ;}
                \draw[thick] (\sc*2.6, 0) -- (3.6*\sc, \s*\i/2);
            }
        }
        \draw (3.6*\sc, 0) node {$\vdots$};
    }
\end{tikzpicture}

\caption{The graph $T_n$, consisting of a segment of length $2n$ connected at each endpoint to $2n$ leaves.}
    \label{fig:tree example}
\end{figure}
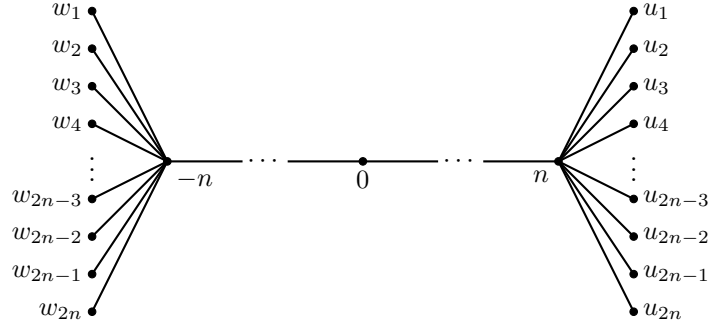

Given a large $N$, let $n=\lfloor (N-1)/6\rfloor$  and consider a tree $T_n$ on $6n+1$ vertices, consisting of a segment of length $2n$, connected at each endpoint to $2n$ leaves, see Figure \ref{fig:tree example}. More precisely, let $T_n=(V,E)$ with
    \begin{eqnarray*}
    V&=&\{w_i\}_{1\leq i\leq 2n} \cup \{i\}_{-n\leq i\leq n}\cup \{u_i\}_{1\leq i\leq 2n}, \\ 
    E&=& \{{w_i,-n}\}_{1\leq i\leq 2n} \cup \{{i,i+ 1}\}_{-n\leq i<n} \cup \{{u_i,n}\}_{1\leq i\leq 2n} \,.
\end{eqnarray*}
The initial profile $f_0$ we choose on $T_n$ is: 
\begin{eqnarray*}
    &&f_0(w_i)= 1 - f_0(u_i) = 0 \quad 1\leq i\leq 2n\, ,\\
    &&f_0(-n)= 1- f_0(n) = n^\frac{-p}{p-1} \, ,\\
    &&f_0(i)= n^\frac{-p}{p-1} + (i+n)\frac{1-2n^\frac{-p}{p-1}}{2n} \quad -n<i<n \, .
\end{eqnarray*}

    We also define $h_0:V\rightarrow [0,1]$ by 
    \begin{eqnarray*}
        &&h_0(w_i)= 1- h_0(u_i) = 0 \quad 1\leq i\leq 2n\, ,\\
        &&h_0(-n)= f_0(-n)=n^\frac{-p}{p-1} \, ,\\
        &&h_0(i) = n^\frac{-p}{p-1} + (i+n)\frac{1-n^\frac{-p}{p-1}}{n} \quad -n<i\leq 0 \, , \\
        &&h_0(i) =1 \quad 1\leq i \leq n \, .
    \end{eqnarray*}

    We are now ready to state our Theorem.
    \begin{thm}
        Run the $\ell^p$-energy minimizing dynamics on the tree $T_n=(V,E)$ defined above with initial opinion profile $f_0$. Then for every large enough $n$, every updating sequence and every $t<\frac{1}{4} n^{\frac{2p-1}{p-1}}$, we have $\osc(f_t)\geq \frac{1}{2}$.
    \end{thm}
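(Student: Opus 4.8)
The plan is to reduce everything to the single estimate that $f_t(-n)\le \tfrac14$ for every update sequence and every $t<\tfrac14 n^{(2p-1)/(p-1)}$. This suffices because $T_n$ together with $f_0$ is invariant under the involution $\sigma$ that sends $i\mapsto -i$ on the segment, interchanges $w_i\leftrightarrow u_i$, and replaces each value $x$ by $1-x$: one checks directly from the formulas that $f_0\circ\sigma=1-f_0$. Since the $\ell^p$-update commutes with $x\mapsto 1-x$ (as $|\cdot|^p$ is even) and with the automorphism $\sigma$, running the dynamics from $f_0$ with the reflected sequence $(\sigma v_s)$ produces $1-f_t\circ\sigma$. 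Hence a bound $f_t(-n)\le\tfrac14$ valid for all sequences yields $f_t(n)\ge\tfrac34$ for all sequences, and therefore $\osc(f_t)\ge f_t(n)-f_t(-n)\ge\tfrac12$.

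To control $f_t(-n)$ from above I would first replace $f_0$ by the upper barrier $h_0$ defined above: one verifies $h_0\ge f_0$ pointwise, so the monotonicity of the dynamics (Claim~\ref{claim-mono}) gives $f_t\le h_t$ for the matched sequence, and it suffices to bound $h_t(-n)$. Two structural facts drive the analysis. First, every interior segment vertex has degree $2$, and for two neighbours the minimiser of $|y-a|^p+|y-b|^p$ is the midpoint $\tfrac{a+b}{2}$; thus on the segment the dynamics are ordinary midpoint averaging, exactly as on the cycle. Second, the endpoint $-n$ is \emph{throttled}: if it is updated when its $2n$ leaf-neighbours have values at most $\ell$ and its segment-neighbour has value $h(-n+1)$, then comparing the positive leaf-pull $2n(y-\ell)^{p-1}$ against the segment-pull shows the new value obeys $h(-n)\le \ell+(2n)^{-1/(p-1)}h(-n+1)$. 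The damping exponent $\tfrac{1}{p-1}$ is exactly the source of the $p$-dependent part of $\beta_p$, and $a=n^{-p/(p-1)}$ was chosen so that $h_0$ is $p$-superharmonic at $-n$.

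These two facts must be combined. A leaf can rise only by copying $h(-n)$, so the left cluster can raise its level only in ratchets of size at most $(2n)^{-1/(p-1)}h(-n+1)$, and this forces $h(-n+1)$ — hence the value of the whole left portion of the segment — to be of order one before $h(-n)$ can approach $\tfrac14$. But the left portion starts as a shallow ramp of slope $\approx\tfrac1{2n}$, and raising it to order one requires transporting order-$n$ worth of value across the length-$2n$ segment by single-site averaging. I would quantify this transport cost by adapting the concave-envelope length argument of Claim~\ref{cl:segment} (via the elementary geometric Lemma~\ref{le: geometric lemma}), now run on the left half with $-n$ treated as a slowly rising low boundary, so that the per-step increase of the envelope length is damped at the endpoint by $(2n)^{-1/(p-1)}$ while the length budget needed before $h(-n)$ reaches $\tfrac14$ is essentially unchanged. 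The ratio of budget to per-step increment then works out to order $n^{(2p-1)/(p-1)}$ — heuristically the product of a diffusive factor of order $n^2$ and the endpoint-damping factor $(2n)^{1/(p-1)}$ — which reduces to the cycle bound $n^3$ at $p=2$.

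The main obstacle is precisely this coupling of the two mechanisms. The throttle inequality alone yields only a lower bound of order $n^{1/(p-1)}$, since it ignores that $h(-n+1)$ cannot be kept of order one for free; conversely, a pure diffusion bound ignores the endpoint damping. The delicate step is to run a \emph{single} potential (the envelope length, or equivalently a dual-mass functional in the spirit of Proposition~\ref{p:mass bound}) that simultaneously charges the diffusive cost of filling the left segment and the multiplicative cost of forcing value through the damped endpoint, and to verify that no update sequence can trade one cost against the other so as to beat the product $n^2\cdot(2n)^{1/(p-1)}$. Once $h_t(-n)\le\tfrac14$ is established for all $t<\tfrac14 n^{(2p-1)/(p-1)}$, the symmetry reduction of the first paragraph completes the proof.
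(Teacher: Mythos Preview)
Your reduction by symmetry and the use of the barrier $h_0$ are exactly right, and you correctly identify that $h_0(-n)=n^{-p/(p-1)}$ is chosen so that $h_0$ is $p$-superharmonic at $-n$. But the rest of your plan misreads the mechanism and therefore reaches for machinery that the paper does not need.

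The point you are missing is that $h_0$ is \emph{linear} on the segment $\{-n,\dots,0\}$ and equal to $1$ on $\{0,\dots,n\}\cup\{u_i\}$. Hence $h_0$ is harmonic at every interior segment vertex and superharmonic at $0$; updating any of these vertices cannot raise $h_t$ above $h_0$. There is no ``transport across the segment'' to pay for, and no concave-envelope/length argument is required. The only vertices where $h_t$ can exceed $h_0$ are the leaves $w_i$, and a leaf can rise only to the current value at $-n$. The paper's proof is then a two-line block argument: for $t\le n$, at least $n$ of the $2n$ leaves have not yet been updated and hence still equal $0$; with $n$ leaves at $0$, the superharmonicity check at $-n$ gives $h_t(-n)\le h_0(-n)$, and by the linearity of $h_0$ on the segment one gets $h_t(-j)\le h_0(-j)$ for all $j\in[0,n]$. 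Thus $h_n\le h_0+h_0(-n)$ pointwise, and by monotonicity and translation invariance $h_{kn}\le h_0+k\,h_0(-n)$. Consequently $\min_i h_t(w_i)\le \lceil t/n\rceil\, n^{-p/(p-1)}$, which stays below $\tfrac14$ for $t<\tfrac14 n^{(2p-1)/(p-1)}$.

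So the correct decomposition of the exponent is $n\times n^{p/(p-1)}$ (leaf count times reciprocal of the ratchet size), not your $n^2\times n^{1/(p-1)}$ (diffusion times damping). These happen to coincide numerically, but your picture leads you to the ``delicate coupling'' you flag as the main obstacle, whereas in the paper there is simply nothing to couple: once $h_0$ is chosen linear on the segment, the diffusive cost is zero and only the leaf-counting remains. Your proposed adaptation of Claim~\ref{cl:segment} and Lemma~\ref{le: geometric lemma} is not wrong in spirit, but it is solving a problem that the barrier $h_0$ was specifically designed to eliminate.
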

    \begin{rem}
        The theorem is valid for all $p>1$ but only yields a sharp bound when $1<p\leq 2$.
    \end{rem}
\begin{proof}
Fix some update sequence. By symmetry, it is enough to prove that for every $t$, we have $$ \min_{i \le 2n} f_t(w_i)  \leq \frac{t}{n} \cdot n^\frac{-p}{p-1} \,.$$ 
Next, we note that $f_0\leq h_0$ and therefore $f_t\leq h_t$ (where $h_t$ is the evolution of $h_0$ under the $\ell^p$-energy minimizing dynamics with the same update sequence). Thus it suffices to prove that for every $t\geq 0$, we have \begin{equation}\label{eq:h2 grows slow}
   \min_{i \le 2n} h_t(w_i) \leq  \frac{t}{n} \cdot n^\frac{-p}{p-1} \,.
\end{equation}
We first verify by induction on $t$  that for all  $t\le n$,
\begin{equation}\label{h2slow}
 \forall j \in [0,n], \quad  h_t(-j) \leq h_0(-j)   
\end{equation}
and 
\begin{equation}\label{h2slow2}
 \forall i \in [1,2n], \quad  h_t(w_i) \leq h_0(-n)  \,.
\end{equation}
The base case is clear. For the induction step, suppose $t+1 \leq n$. The induction step for \eqref{h2slow2} is immediate, since $-n$ is the only neighbour of   $w_i$ for each $i$.    The linearity of $h_0$ on $\{-n,-n+1,\ldots,0\}$ and the induction hypothesis imply that  $h_{t+1}(-j) \leq h_0(-j)$ holds for $j<n$.   

 It remains to prove that  $h_{t+1}(-n) \le h_0(-n)$, or equivalently, the function obtained from $h_t$ by replacing the value at $-n$ with $h_0(-n)$ is $p$-superharmonic at $-n$. The induction hypothesis yields that $h_t(1-n) \le h_0(1-n)$, so  \eqref{eq:psuper} implies that it is enough to check that \begin{equation}
        \sum_{i=0}^{2n} \left(h_0(-n)-h_t(w_i)\right)^{p-1} \stackrel{?}{\geq} \left(h_0(-n+1) - h_0(-n)\right)^{p-1} \, .
    \end{equation}
Since $t \le n$, at least $n$ of the opinions $\{h_t(w_i)\}_{i=1}^{2n}$ equal $0$, so the inequality $h_0(-n+1) - h_0(-n)<\frac{1}{n}$ implies that it suffices to show
 \begin{equation}
        nh_0(-n)^{p-1}\stackrel{?}{\geq} \\ 
 \Bigl(\frac{1}{n}\Bigr)^{p-1} \,.
\end{equation}
This, indeed,  holds for our choice of $h_0(-n)=n^\frac{-p}{p-1}$, completing the induction step. \\
The inequalities \eqref{h2slow} and \eqref{h2slow2} directly imply $h_n \leq h_0 + h_0(-n)$ (pointwise). Another induction (using the  monotonicity of the dynamics and the fact that adding a constant to all opinions is preserved by the dynamics) gives that 
\[h_{kn}\leq h_0+kh_0(-n) \,.\]  

If $kn \le t <(k+1)n$, then the preceding inequality yields that
$$\forall i \le 2n, \quad h_{kn}(w_i) \le k n^\frac{-p}{p-1} \,.$$
Since at most $n$ updates take place in the time interval $[kn,t]$, 
the inequality \eqref{eq:h2 grows slow} follows, completing the proof.


\end{proof}

We now move to the large $D$ case. 
Let $d=\lfloor D-1 \rfloor$. Given a large $N$, we let $n$ be the largest multiple of $d$ such that $4n+1 \le N$.
 Our graph $H_{d,n}$ will consist of a line segment $[-n,n]$, connected at each endpoint to   $m=\frac{n}{d}$ cliques, each of size $d$. The vertex $-n$ is connected to all vertices of $m$ cliques $\{W_i\}_{i=1}^m$, while 
$n$ is connected to all vertices of the $m$ cliques $\{U_i\}_{i=1}^m$, see Figure \ref{fig:Hdn}.
  Thus the maximal degree in this construction is $n+1$, but the average degree is $\leq D$. We write  $W_i=\{w_{i,j}\}_{j=1}^d$ and $U_i=\{u_{i,j}\}_{j=1}^d$.

\begin{figure}[h]
    \centering

\begin{tikzpicture}[scale=1.2]
    \filldraw[black] (0,0) circle (0.05cm) node[anchor=north] {$0$};
    \foreach \s in {-1, 1} {
        \draw[thick] (0,0) -- (\s,0);
        \draw (\s*1.3, 0) node {$\cdots$};
        \draw[thick] (\s*1.6,0) -- (\s*2.6,0);
        \ifthenelse{\s=1}{\filldraw[black] (\s*2.6,0) circle (0.05cm) node[anchor=north east] {$n$}}{\filldraw[black] (\s*2.6,0) circle (0.05cm) node[anchor=north west] {$-n$}};
    }

    \def\n{6}
    \def\xx{4.5}
    \foreach \sc in {-1, 1} {
        \foreach \high in {1, -1, 2, -2} {
            \def\cent{\sc*\xx};
            \node[circle,minimum size=0.75 cm] at (\cent,\high) (b) {};
            \foreach\x in{1,...,\n}
            {
              \node[inner sep=0cm, minimum size=0.05cm, draw,circle, fill=black] (n-\x) at (b.{360/\n*\x}) {} ;
            }
            \foreach\x in{1,...,\n}{
                \foreach\y in{\x,...,\n}{
                    \ifnum\x=\y\relax\else
                    \draw[thick] (n-\x) edge[-] (n-\y);
                    \fi
                }
            \draw[red] (\sc*2.6, 0) -- (n-\x);
            }
        }
        \draw (\sc*\xx,0.1) node {$\vdots$};
    }
\end{tikzpicture}
    \caption{A picture of $H_{6,n}$. There are $m=\frac{n}{6}$ cliques of size $6$ on each side. The vertices in the top left clique are $w_{1,1},\ldots,w_{1,6}$.}
    \label{fig:Hdn}
\end{figure}
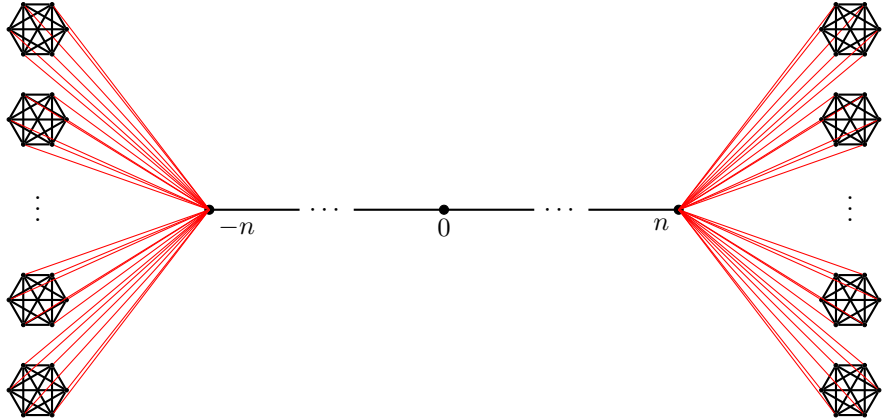
%


We are now ready to state our next theorem, which holds for all $p>1$ and shows that 
 \eqref{eq:p D lower} holds for $H_{d,n}$   for all $1<p\leq 2$, provided that $D>D(p)$, as well as for $2<p\leq 3$ when $D$ is of order $n$. This  will complete the proof of the converse statement in Theorem \ref{th:main lpbound} for these cases.

    \begin{thm}
        Run the $\ell^p$-energy minimizing dynamics on the graph $H_{d,n}$ with initial opinions $f_0$ defined as follows:  $f_0=0$  on all vertices $w_{i,j}$ and on $[-n,0)$, at the center $f_0(0)=1/2$,  and     $f_0=1$ on  all $u_{i,j}$ and on $(0,n]$.  Then for every large enough $n$ and $d$, every updating sequence, and every $t<\frac{1}{25e} 2^\frac{-p}{p-1}n^\frac{2p-1}{p-1}d^\frac{1}{p-1}$,  we have $\osc(f_t)\geq \frac{1}{2}$.
    \end{thm}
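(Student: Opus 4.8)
The plan is to follow the template of the preceding tree theorem --- reduce to a one-sided statement, dominate $f_0$ by a $p$-superharmonic comparison profile, and control its evolution --- but with one essential new feature: each left clique acts as a slow reservoir whose minimum lags behind the value at its anchor $-n$, and this lag is exactly what manufactures the extra factor $d^{1/(p-1)}$. First I would use the left--right symmetry of $H_{d,n}$ and of $f_0$: the graph involution $k\mapsto -k$, $w_{i,j}\leftrightarrow u_{i,j}$ composed with the value reflection $x\mapsto 1-x$ (which commutes with the $\ell^p$-update) turns any estimate for the left cliques into the mirror estimate for the right cliques. Hence it suffices to prove $\min_{i,j}f_t(w_{i,j})\le\tfrac14$ below the stated threshold, since then $\max_{i,j}f_t(u_{i,j})\ge\tfrac34$ and $\osc(f_t)\ge\tfrac12$. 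As in the tree case I would introduce $h_0\ge f_0$ that vanishes on the left cliques, rises linearly along the segment from a small value at $-n$ to $1$ at the center, and equals $1$ on the entire right half; its $p$-superharmonicity is verified from \eqref{eq:psuper}, so Claim~\ref{claim-mono} gives $f_t\le h_t$ and it is enough to bound the floor $a_t:=\min_{i,j}h_t(w_{i,j})$.

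Next I would isolate the gap that governs progress. Since a left-clique vertex is adjacent only to its $d-1$ clique-mates and to $-n$, an update can never raise it above the largest of those neighbours, and an easy induction yields $h_t(w_{i,j})\le h_t(-n)$ for all $t$. On the other hand, exactly as in the tree argument one bounds the \emph{update value} at $-n$ by checking $p$-superharmonicity at a target level: because $-n$ has $n$ clique neighbours, whenever a constant fraction of them still sit at the floor $a_t$, setting $-n$ equal to $a_t+\Delta$ is $p$-superharmonic as soon as
\[
n\,\Delta^{\,p-1}\;\ge\;\bigl(h_t(-n+1)-a_t-\Delta\bigr)^{p-1},
\]
so the update value at $-n$ satisfies $h_t(-n)\le a_t+\Delta$ with
\[
\Delta\;\lesssim\;n^{-1/(p-1)}\bigl(h_t(-n+1)-a_t\bigr)\;\lesssim\;n^{-1/(p-1)}\cdot n^{-1}\;=\;n^{-p/(p-1)},
\]
the last step using that the segment has length $\sim n$ and total height $\le 1$, so its gradient near $-n$ is $\lesssim 1/n$. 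Thus the whole left side can be dragged up by at most one gap $\Delta\sim n^{-p/(p-1)}$ before the floor itself must be lifted, that is, before every clique vertex currently at level $a_t$ is updated.

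The crux is a clique-filling lower bound on how many updates lifting the floor by one gap costs. Fix a clique whose $d$ vertices lie near a common level $a$ with anchor at $a+\Delta$. A single update of a lowest vertex raises it by at most $\Delta(d-1)^{-1/(p-1)}$ (it is pulled up by the one anchor and held down by its $d-1$ mates), and the energy computation behind Claim~\ref{cl:p improve} shows such an update releases energy only of order $\Delta^{p}d^{-1/(p-1)}$, whereas clearing the gap for the whole clique must dissipate the $\sim d\,\Delta^{p}$ of energy stored on its $d$ anchor edges. Hence raising one clique's minimum by $\Delta$ needs $\gtrsim d^{\,p/(p-1)}$ internal updates, i.e.\ $\gtrsim d^{1/(p-1)}$ per clique vertex; summing over the $m=n/d$ left cliques, lifting the floor by one gap costs $\gtrsim n\,d^{1/(p-1)}$ updates. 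Since the climb from $0$ to $\tfrac14$ requires $\gtrsim\tfrac14\Delta^{-1}\sim n^{p/(p-1)}$ successive gaps, the total is $\gtrsim n^{p/(p-1)}\cdot n\,d^{1/(p-1)}=n^{(2p-1)/(p-1)}d^{1/(p-1)}$; tracking constants (the factor $2^{-p/(p-1)}$ coming from the central initial height $\tfrac12$ and the $e$ from the harmonic-type summation over levels) produces the stated threshold.

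The step I expect to be the main obstacle is the clique-filling lower bound for an \emph{arbitrary} update sequence, where the adversary may deliberately unbalance a clique --- lifting a few vertices high so that a later update of the last low vertex jumps it up and releases much energy at once, seemingly bypassing the ``small release per update'' heuristic. Making this rigorous requires either a monovariant adapted to the order statistics of the clique (controlling how fast the \emph{minimum}, not the sum, can rise) or a charging scheme that simultaneously (a) charges each update to the clique it refills, (b) controls $h_t(-n)$ through \eqref{eq:psuper} as the untouched floor vertices are depleted, and (c) rules out that interleaving segment updates with clique updates accelerates the refill. The casework $1<p\le2$ versus $2<p\le3$ enters precisely here, through which branch of Claim~\ref{cl:p improve} (hence which exponent in $I_i$, and which argument from the proof of Lemma~\ref{le:main p}) governs the per-update energy release.
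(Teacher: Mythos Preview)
Your overall architecture matches the paper: reduce by symmetry to a one-sided estimate, dominate $f_0$ by a comparison profile $h_0$, show that $h_t$ cannot drag the left-clique minimum past $\tfrac14$ before the stated time, and iterate in blocks of size $\asymp n^{-p/(p-1)}$. The scale $h_0(-n)\sim n^{-p/(p-1)}$ is also right.

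The genuine gap is exactly where you flagged it, and the energy heuristic cannot close it. The per-update release bound $\Delta^p d^{-1/(p-1)}$ is valid only when the clique is balanced; if the adversary has already lifted $d-1$ clique vertices to the anchor level $a+\Delta$ and the last one sits at $a$, then a \emph{single} update of that last vertex releases all $d\Delta^p$ of the anchor-edge energy at once. So there is no uniform upper bound on energy release per update, and invoking Claim~\ref{cl:p improve} points the wrong way anyway (it gives a \emph{lower} bound on release). The charging scheme you sketch would have to account for the cost of creating the unbalanced configuration, which is essentially the original problem again.

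The paper implements the order-statistics monovariant you listed as an option, but with a twist you are missing: $h_0$ is \emph{not} taken to vanish on the cliques. Each clique is initialized as a staircase $h_0(w_{i,j})=(j-1)\delta$ with $\delta=e\bigl(2/(nd)\bigr)^{p/(p-1)}$, so that $h_0(-n)=e^{-1}d^{p/(p-1)}\delta$. One then proves by induction that the sorted values satisfy $\widetilde{h_t}(w_{i,j})\le (j-1)\delta+k_i(t)\delta$, where $k_i(t)$ counts updates inside clique $i$. The only delicate case is when an update produces the new clique-maximum; then the other $d-1$ vertices lie below the target level by margins at least $\delta,2\delta,\dots,(d-1)\delta$, and the superharmonicity check \eqref{eq:psuper} reduces to $\sum_{j=1}^{d-1}(j\delta)^{p-1}\ge h_0(-n)^{p-1}$, which is exactly what fixes $\delta$. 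This gives directly that lifting the minimum of one clique by $h_0(-n)$ costs $k_i\gtrsim d^{p/(p-1)}$ updates in that clique. Finally, the bound $h_t(-n)\le h_0(-n)$ is maintained not via ``a constant fraction of clique vertices sit at the floor'' (the floor moves) but via a pigeonhole over \emph{cliques}: at least half the $m=n/d$ cliques have $k_i(t)\le 2t/m$, and for those the sorted values stay below $\tfrac12 h_0(-n)$, supplying enough downward pull at $-n$.
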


\begin{proof}
Let $\delta =   e\left(\frac{2}{nd}\right)^{\frac{p}{p-1}} $
(this choice of $\delta$ will emerge from the proof). 
Define a new initial profile $h_0:V\rightarrow [0,1]$ by 
    \begin{eqnarray*}
        &&h_0(w_{i,j}) = (j-1) \delta \quad 1\leq i\leq m\, , 1\leq j \leq d ,\\
        &&h_0(-n)=e^{-1} d^\frac{p}{p-1}\delta=  (2/n)^{\frac{p}{p-1}}\, ,\\
        &&h_0(i) = h_0(-n) + (i+n)\frac{1-h_0(-n)}{n} \quad -n<i\leq 0 \, , \\
        &&h_0(i) = 1 \quad 0<i\leq n \, ,\\
        &&h_0(u_{i,j}) = 1 \quad 1\leq i\leq m\, , 1\leq j \leq d .\\
    \end{eqnarray*}
    
Fix an updating sequence. As in the previous example, we let $h_t$ denote the evolution of $h_0$ under the $\ell^p$-energy minimizing dynamics with the same update sequence. 
Since $h_0\geq f_0$, it is enough to prove that 

\begin{equation}\label{eq:large D main}  \forall t<\frac{1}{25e} 2^\frac{-p}{p-1}n^\frac{2p-1}{p-1}d^\frac{1}{p-1},\quad  \forall i\le m, \quad  \min_j h_t(w_{i,j})<\frac{1}{4} \, ,\end{equation}  
as this would imply that $\osc(f_t)\geq \frac{1}{2}$ for such $t$.

We will need  some further notation. 
Given  a function $h:V\rightarrow \R$, we denote by $\widetilde{h}$ the function obtained by re-ordering the values of $h$ on each clique $W_i$, so that the sequence $\{\widetilde{h}(w_{i,j})\}_{j=1}^d$ is nondecreasing in $j$ for every $1\leq i\leq m$. 
We denote by $k_i(t)$ the number of updates in $W_i$ by time $t$. 

We will deduce (\ref{eq:large D main}) from the following 
claim:
\begin{claim}
          For every integer $t\in \big[0, \frac{m}{6e}d^{\frac{p}{p-1}}\big]$, we have
    \begin{itemize}
        \item [(a)]
        For all $0\leq j\leq n$, 
\begin{equation}\label{eq:star}
            h_t(-j)\leq h_0(-j)  \, ;
        \end{equation}
        \item [(b)] For all $1\leq i\leq m$ and all $1\leq j\leq d$,
\begin{equation}\label{eq:starstar}
\widetilde{h_t}(w_{i,j})\leq \min\bigl(h_0(-n), \, h_0(w_{i,j})+k_i(t)\delta \bigr) \, .
\end{equation}        
    \end{itemize}
\end{claim}
\begin{proof}
    We will prove both clauses together by induction on $t$.  For $t=0$ the claim is trivial, so we move to the induction step and assume that $t+1 \le  \frac{m}{6e}d^{\frac{p}{p-1}}$ and the inequalities \eqref{eq:star} and \eqref{eq:starstar} hold at time $t$. 
    
    We start by showing that \eqref{eq:starstar} holds for time $t+1$. The inequality
    \begin{equation} \label{clear1}
     \forall i,j \quad  \; h_{t+1}(w_{i,j}) \le h_0(-n)  
    \end{equation}
    is immediate from the induction hypothesis and monotonicity of the dynamics. 
    
    We may assume that the vertex $v_{t+1}$ updated at time $t+1$ is in  $W_i$ for some $i \le m$, since otherwise \eqref{eq:starstar} clearly continues to hold.
    Reorder  $W_i=\{\zeta_j\}_{j=1}^d$ so that $j \mapsto h_{t}(\zeta_j)$ is nondecreasing, i.e., for all $j \in \{1,\ldots,d\}$, we have
    \begin{equation} \label{enumi0}
    h_{t}(\zeta_j)=\widetilde{h_{t}}(w_{i,j}) \,. 
    \end{equation} 
We also reorder  $W_i=\{z_j\}_{j=1}^d$ so that $j \mapsto h_{t+1}(z_j)$ is nondecreasing and  we have
    \begin{equation} \label{enumi}
    \forall j \in \{1,\ldots,d\}, \quad \;h_{t+1}(z_j)=\widetilde{h_{t+1}}(w_{i,j}) \,. 
    \end{equation} 
    
    Suppose that $v_{t+1}=z_\ell \in W_i$. For   $j\in  \{1,\ldots,\ell-1\}$
    we have 
    $z_j \in \{\zeta_j,  \zeta_{j+1}\}$,  so 
    by the induction hypothesis and \eqref{enumi0},
    $$h_{t+1}(z_j)  \le h_t(\zeta_{j+1})\le h_0(w_{i,j+1})+k_i(t)\delta=h_0(w_{i,j})+k_i(t+1)\delta \,.$$
    For   $j\in  \{\ell+1,\ldots,d\} $
    we have $z_j\in \{\zeta_j,  \zeta_{j-1}\}$,
    whence 
    $$h_{t+1}(z_j)  \le h_t( \zeta_j)\le h_0(w_{i,j})+k_i(t+1)\delta\,.$$
     In view of \eqref{clear1} and \eqref{enumi}, this verifies   \eqref{eq:starstar}  at time $t+1$ for all $j \ne \ell$. 
    
    Similarly, if $j=\ell<d$, then
    $z_{\ell+1}\in \{\zeta_\ell,  \zeta_{\ell+1}\}$, so
    $$h_{t+1}(z_\ell) \le h_{t+1}(z_{\ell+1}) \le h_{t}(\zeta_{\ell+1}) \le h_0(w_{i,\ell+1})+k_i(t)\delta = h_0(w_{i,\ell})+k_i(t+1)\delta \,,$$ 
    verifying \eqref{eq:starstar}  at time $t+1$ for   $j = \ell$ if $\ell<d$.

The only remaining case is $j=\ell=d$, and for this case we must check that
\begin{equation} \label{eq:l-equals-d} h_{t+1}(z_d) \stackrel{?}{\leq} h_0(w_{i,d})+k_i(t+1)\delta \,.
\end{equation}
When verifying this, we may assume that $h_0(w_{i,d})+k_i(t+1)\delta < h_0(-n)$, since otherwise   \eqref{clear1} implies that \eqref{eq:l-equals-d} holds. 
    By the superharmonicity criterion \eqref{eq:psuper}, it is enough to prove that
    \begin{eqnarray*}  
\sum_{j=2}^{d}\bigl(h_0(w_{i,d})+k_i(t+1)\delta -\widetilde{h_t}(w_{i,j})\bigr)^{p-1} 
  \stackrel{?}{\geq} \bigl(h_t(-n)-h_0(w_{i,d})-k_i(t+1)\delta \bigr)^{p-1} .
    \end{eqnarray*}
Recalling the induction hypotheses
\eqref{eq:star} and \eqref{eq:starstar}, as well as   the definition of $h_0$, the preceding inequality   would follow  if we show that
\begin{equation}
    \sum_{j=2}^{d}\bigl((d-j+1)\delta \bigr)^{p-1} \stackrel{?}{\geq}  h_0(-n)^{p-1}\, .
\end{equation}
    Comparing the sum to an integral, it suffices to verify that
    \begin{equation}
        \frac{(d-1)^p}{p}\delta^{p-1}\stackrel{?}{\geq} \Bigl(e^{-1}d^\frac{p}{p-1}\delta \Bigr)^{p-1} \, .
    \end{equation}
    It is therefore enough to prove that
    \begin{equation}
        \frac{(d-1)^p}{p}\stackrel{?}{\geq}  {e^{1-p}}d^p \, ,
    \end{equation}
    which is equivalent to 
    \begin{equation}
        \left(\frac{d-1}{d}\right)^p\stackrel{?}{\geq}pe^{1-p} \, .
    \end{equation}
    The last inequality holds for all $d>d(p)$, since   $pe^{1-p}<1$ for $p>1$.

    We now move to show that \eqref{eq:star} holds at time $t+1$. This only has to be checked if at time $t+1$ we update one of the vertices $-n,\ldots, 0$, and only at the updated vertex. For any $j\in \{-n+1,\dots,0\}$, this holds by monotonicity of the dynamics and the definition of $h_0$, so we are left with the only interesting case of updating vertex $-n$. Thus we need to show that $h_{t+1}(-n)\leq h_0(-n)$. For this to hold it is enough to check that 
   \begin{equation} 
       \sum_{i=1}^m \sum_{j=1}^d \Bigl(h_0(-n)  - \widetilde{h_t}(w_{i,j})\Bigr)^{p-1} 
 \stackrel{?}{\geq} \big(h_t(1-n)-h_0(-n)\big)^{p-1} \,.
   \end{equation}
    By the induction hypothesis,  it suffices to prove that 
\begin{equation}\label{eq: 3}
        \sum_{i=1}^n \sum_{j=1}^d \max \bigl(0 \,,   h_0(-n)-h_0(w_{i,j})-k_i(t) \delta \bigr)^{p-1}\stackrel{?}{\geq} n^{1-p} \,.
    \end{equation}
    Let $S_t= \{i\leq m \, : \, k_i(t)\leq \frac{2t}{m} \}$.
    For  $d>d(p)$,  we have
    $d+\frac{1}{3e}d^{\frac{p}{p-1}} <\frac{1}{2e} d^{\frac{p}{p-1}}$, so if also $t< \frac{m}{6e}d^{\frac{p}{p-1}}$ and $i \in S_t$, then  
    $$ h_0(w_{i,j})+k_i(t) \delta \le h_0(w_{i,j})+\frac{2t}{m} \delta \le \frac{1}{2} h_0(-n) \,.$$
     Thus to verify \eqref{eq: 3}, it is enough to check   that
    \begin{equation}
        \sum_{i\in S_t} d\left( \frac{1}{2} h_0(-n)\right)^{p-1}\stackrel{?}{\geq} n^{1-p} \, .
    \end{equation}
    
     Since $\sum_{1\leq i\leq m}k_i(t)\leq t$, we have $|S_t|\geq \frac{m}{2}$, so it suffices to show that
    \begin{equation}
        \frac{md}{2} \Bigl(\frac12 h_0(-n) \Bigr)^{p-1}\stackrel{?}{\geq} n^{1-p} \,. 
    \end{equation}
    Recalling that $md=n$ and that $h_0(-n)=\frac{1}{e} d^\frac{p}{p-1}\delta$, the last inequality will follow for large $d$ if we show that
    \begin{equation}
        \frac{n}{2}\left(\frac{1}{2e}d^\frac{p}{p-1}\delta\right)^{p-1}\stackrel{?}{\geq} n^{1-p} \, .
    \end{equation}
    This, indeed, holds for  
    $\delta =   e\left(\frac{2}{nd}\right)^{\frac{p}{p-1}}$.   The proof of the claim is complete. \end{proof}
Let $t_*:= \Big\lfloor\frac{m}{6e}d^{\frac{p}{p-1}} \Big\rfloor \,.$
The claim implies that $\widetilde{h_{t_*}} \leq h_0 + h_0(-n)$ (pointwise). Another induction gives that for all integers $\ell \ge 1$, we have
\[ \widetilde{h_{\ell t_*}}\leq h_0+\ell h_0(-n) \,.\]  

We deduce that 
$$  \forall t<\left\lfloor\frac{1}{4h_0(-n)} \right\rfloor t_* \, \,,\quad  \forall i\le m \,,\quad \; \min_j h_t(w_{i,j})<\frac{1}{4} \, .$$ Since $$\left\lfloor\frac{1}{4h_0(-n)} \right\rfloor t_* \geq \frac{1}{25e} 2^\frac{-p}{p-1}n^\frac{2p-1}{p-1}d^\frac{1}{p-1} \, $$ for $d>d(p)$, the theorem is proved.


\end{proof}

\subsection{Lower bound for $2<p<3$.}
We may assume that $D\geq 20$, as for smaller $D$ we can apply Theorem \ref{thm:cycle1}. 
In this subsection we analyze the $\ell^p$-energy minimizing dynamics on the accordion graph depicted in Figures \ref{fig:accordion} and \ref{fig:accordion large}.
The first ingredient we need is the following lemma. It shows that when the opinions at the neighbours of a vertex $w$ form two arithmetic progressions with the same gap and length, the value that minimizes the energy at $w$ does not depend on $p$. 

\begin{lem}\label{le:two arithmetic progressions}
     Let $p>1$, and let $f$ be a function on $\{u_i\}_{i=1}^d \cup \{ w_i\}_{i=1}^d$ that satisfies $f(u_i)=f(u_1)+(i-1)a$ and $f(w_i)=f(w_1)+(i-1)a$ for every $1<i\leq d$. Then 
    $\psi(z):= \sum_{i=1}^d \Bigl(|f(u_i)-z|^p + |f(w_i)-z|^p\Bigr)$ is minimized at $z_*=\frac{f(u_1)+f(w_d)}{2}$.
\end{lem}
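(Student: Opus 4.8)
The plan is to exploit a reflection symmetry of the combined collection of neighbour values about the candidate point $z_*$. Since $p>1$, each summand $z\mapsto |c-z|^p$ is strictly convex, so $\psi$ is strictly convex and hence has a unique minimizer. It therefore suffices to show that $\psi$ is symmetric (even) about $z_*$, because a strictly convex function that is even about a point must attain its minimum exactly at that point: if the minimizer were $z_*+s$ with $s\ne 0$, evenness would make $z_*-s$ a second minimizer, contradicting uniqueness.

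First I would record the key arithmetic identity. Writing $2z_*=f(u_1)+f(w_d)$ and using $f(u_i)=f(u_1)+(i-1)a$ together with $f(w_d)=f(w_1)+(d-1)a$, a one-line computation gives
\begin{equation*}
2z_*-f(u_i)=f(w_1)+(d-i)a=f(w_{d-i+1}),
\end{equation*}
and symmetrically $2z_*-f(w_i)=f(u_{d-i+1})$. Thus the reflection $x\mapsto 2z_*-x$ sends the multiset $\{f(u_i)\}_{i=1}^d$ bijectively onto $\{f(w_i)\}_{i=1}^d$ (via the involution $u_i\leftrightarrow w_{d-i+1}$), so the full multiset of neighbour values is invariant under this reflection.

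With this invariance in hand, for any function $g$ we have $\sum_i\bigl(g(f(u_i))+g(f(w_i))\bigr)=\sum_i\bigl(g(2z_*-f(u_i))+g(2z_*-f(w_i))\bigr)$. Applying this with $g(x)=|x-z_*+t|^p$ and simplifying $|2z_*-x-z_*+t|=|x-z_*-t|$ yields $\psi(z_*-t)=\psi(z_*+t)$ for every real $t$; that is, $\psi$ is even about $z_*$. Combined with strict convexity, the unique minimizer must equal $z_*$, as desired.

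I do not expect a genuine obstacle here: the whole argument reduces to spotting the involution $u_i\leftrightarrow w_{d-i+1}$, after which convexity does the rest. The only point requiring a little care is confirming that the two progressions share the \emph{same} gap $a$ and the \emph{same} length $d$, which is precisely what makes $z_*$ a centre of symmetry; were the lengths or gaps unequal, no such reflection would exist and the minimizer would in general depend on $p$ (which is exactly the $p$-independence this lemma is meant to capture).
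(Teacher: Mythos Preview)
Your proof is correct and rests on exactly the same observation as the paper's: the involution $u_i\leftrightarrow w_{d-i+1}$ pairs neighbour values symmetrically about $z_*$. The paper uses this pairing to see that the terms in $\psi'(z_*)$ cancel in pairs (then invokes convexity), whereas you use it to obtain the evenness $\psi(z_*-t)=\psi(z_*+t)$ and invoke strict convexity; these are two phrasings of the same argument.
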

\begin{proof}
    The function $\psi$ is convex and \begin{equation*}  
     \psi'(z)=p\sum_{i=1}^d \Bigl(|f(u_i)-z|^{p-1}\sign(f(u_i)-z) + |f(w_{d-i+1})-z|^{p-1}\sign (f(w_{d-i+1})-z)\Bigr)\, .
    \end{equation*} 
    Observe that for $z=z_*$, the two terms in the $i$th summand are of the same magnitude and opposite signs, because 
    $$ f(u_i)-z_* +f(w_{d-i+1})-z_* = f(u_1)+(i-1)a + f(w_d)-(i-1)a - 2z_*=0\, .$$
    Therefore $\psi'(z_*)=0$, as required. 
\end{proof}

Next, we analyze the dynamics on one side of the accordion.

\begin{figure}[h]
    \centering
\begin{tikzpicture}[scale=1.2]

  \foreach \x in {-3, -2, -1, 1, 2, 3} {
    \foreach \y in {-1.5, -1, -0.5, 0.5, 1, 1.5} {
        \foreach \yy in {-1.5, -1, -0.5, 0.5, 1, 1.5} {
          \draw (\x,\y) -- (\x+1,\yy);
        }
    }
  }

  \foreach \x in {-3, ..., 4} {
  \filldraw[black] (\x,0) circle (0cm) node {$\vdots$} ;
    \foreach \y in {-1.5, -1, -0.5, 0.5, 1, 1.5} {
        \filldraw[red] (\x,\y) circle (0.05cm) node[anchor=north] {} ;
    }
  }

  \foreach \y in {-1,-0.5,0,0.5,1}
    \filldraw[black] (0.5,\y) circle (0cm) node {$\cdots$} ;

  \node[anchor=west] (A) at (4,1.5) {$v_{m,d}$};
  \node[anchor=west] (A) at (4,1) {$v_{m,d-1}$};
  \node[anchor=west] (A) at (4,0.5) {$v_{m,d-2}$};
  \node[anchor=west] (A) at (4,-0.5) {$v_{m,3}$};
  \node[anchor=west] (A) at (4,-1) {$v_{m,2}$};
  \node[anchor=west] (A) at (4,-1.5) {$v_{m,1}$};

  \node[anchor=east] (A) at (-3,1.5) {$v_{-m,d}$};
  \node[anchor=east] (A) at (-3,1) {$v_{-m,d-1}$};
  \node[anchor=east] (A) at (-3,0.5) {$v_{-m,d-2}$};
  \node[anchor=east] (A) at (-3,-0.5) {$v_{-m,3}$};
  \node[anchor=east] (A) at (-3,-1) {$v_{-m,2}$};
  \node[anchor=east] (A) at (-3,-1.5) {$v_{-m,1}$};
\end{tikzpicture}
    \caption{The graph $H=H(m,d)$ analyzed in Lemma \ref{lem:half graph} consists of $2m+1$ anti-cliques $\{V_k\}_{-m\leq k \leq m}$. Each vertex in $V_k$ for $|k|<m$ is connected to all vertices in $V_{k\pm 1}$. }
    \label{fig:half-accordion}
\end{figure}
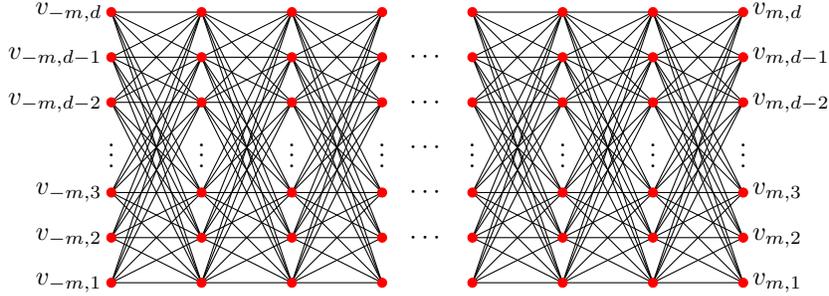

Let $m> 8$. Consider the graph $H=H(m,d)$, depicted in Figure \ref{fig:half-accordion}, which consists of $2m+1$ anti-cliques $\{V_k\}_{-m\leq k \leq m}$ where $V_k=\{v_{k,j}\}_{j=1}^d$. For every $k \in \{-m,\dots, m-1\}$,    each vertex in $V_k$  is adjacent to all vertices in $V_{k+1}$. 

\noindent \textbf{Notation}. Given a function $F$ on $[-m,m]$ and $-m<k<m$, we denote 
$$\overline{F}(k):=\frac{F(k-1)+F(k+1)}{2} \, .$$ 
\begin{lem}\label{lem:half graph}
     Fix some update sequence $\{(k_s,j_s)\}_{s\geq 1}$ for the graph $H$ defined above, with $|k_s|<m$ for all $s$.  Let $\delta=\frac{1}{m^2(d-1)}$, and run
the $\ell^p$-energy minimizing dynamics on $H$ using this update sequence, starting with initial opinions $$\varphi_0(v_{k,j})=\frac{k^2}{2m^2}+(j-1)\delta \, .$$
    
Let $M_t(k):=\max_{1\leq j\leq d}\varphi_t(v_{k,j})$ and $M_t^*(k):= \max_{0 \le s \le t} M_s(k)$ for all integer $k \in [-m,m].$ Then,
    \begin{enumerate}[label=(\roman*)]
        \item  $M_t^*(k)$ is convex in $k\in [-m,m]$ for every $t\geq 0$ ;
        \item $M_t(k)\leq \frac{1}{4}+\frac{k^2}{2m^2}$ for all $t\leq\frac{1}{160}(d-1)m^3$ and all $-m\leq k\leq m$ ;
        \item $M_t(-m+1)\leq \frac{1}{2}-\frac{1}{8m}$ for all $t\leq \frac{1}{160}(d-1)m^3$.
    \end{enumerate}
\end{lem}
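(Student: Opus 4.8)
The plan is to prove (i) first, then obtain (ii) by a length/convexity argument in the spirit of Claim \ref{cl:segment}, and finally deduce (iii) from (ii) together with convexity.

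\emph{Part (i).} I would record a sub-averaging bound for the level maxima. If a vertex of $V_k$ (with $|k|<m$) is updated, then by monotonicity (Claim \ref{claim-mono}) its new value is at most the minimiser obtained when every neighbour in $V_{k-1}$ is raised to $M_t(k-1)$ and every neighbour in $V_{k+1}$ to $M_t(k+1)$; that configuration consists of $d$ equal values $M_t(k-1)$ and $d$ equal values $M_t(k+1)$, so by symmetry (or Lemma \ref{le:two arithmetic progressions} with gap $0$) its minimiser is exactly $\overline{M_t}(k)$. Hence $M_{t+1}(k)\le\max(M_t(k),\overline{M_t}(k))$ at the updated level, and all other levels are frozen. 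I prove convexity of $M_t^*$ by induction on $t$: assuming $M_t^*$ convex we have $M_t^*(k)\le\overline{M_t^*}(k)$, so the displayed bound together with $\overline{M_t}(k)\le\overline{M_t^*}(k)$ gives $M_{t+1}^*(k)\le\overline{M_t^*}(k)$ at the updated level. Raising the value of a convex function at one interior point up to the chord height (and leaving the never-updated endpoints $k=\pm m$ fixed) preserves convexity, which is (i).

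\emph{Part (ii), set-up.} Put $G_t(k):=M_t^*(k)-\tfrac{k^2}{2m^2}$, so that $G_0\equiv(d-1)\delta=\tfrac1{m^2}$, $G_t(\pm m)\equiv\tfrac1{m^2}$, and (by convexity of $M_t^*$) $\Delta G_t\ge-\tfrac1{2m^2}$. Exactly as in Claim \ref{cl:segment}, I would pass to the concave envelope $H_t$ of $G_t$ (so $-\tfrac1{2m^2}\le\Delta H_t\le0$) and track the length $\ell(H_t)$ of its graph over $[-m,m]$, starting from $\ell(H_0)=2m$. If $M_T^*(0)\ge\tfrac14$ for some $T$, then $H_T(0)\ge\tfrac14$, and a concave function pinned at height $\tfrac1{m^2}$ at $k=\pm m$ and reaching height $\ge\tfrac14$ has graph length $\ge 2\sqrt{m^2+(\tfrac14-\tfrac1{m^2})^2}\ge 2m+\tfrac{c}{m}$. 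Lemma \ref{le: geometric lemma}, applied with $\alpha=\tfrac1{2m^2}$, bounds the per-step growth of $\ell(H_t)$ by $4\alpha\beta_t$, where $\beta_t$ is the increase of $G$ at the updated level. Thus an upper bound $\beta_t\le C\delta$ on the increments would yield $\ell(H_T)-\ell(H_0)\le O\!\big(T/(m^4(d-1))\big)$, which against the tent lower bound forces $T\ge c'(d-1)m^3$; tracking the constants should give the threshold $\tfrac1{160}(d-1)m^3$ and the bound $M_t^*(k)\le\tfrac14+\tfrac{k^2}{2m^2}$ (hence the same for $M_t\le M_t^*$), which is (ii).

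\emph{Part (ii), the crux and main obstacle.} Everything therefore reduces to controlling the increments $\beta_t$ and extracting the factor $(d-1)$. This is where Lemma \ref{le:two arithmetic progressions} is essential: while each level remains an arithmetic progression of gap $\delta$, the $\ell^p$-minimiser at a vertex of $V_k$ coincides with the $p=2$ (mean) value, so the level maximum is driven by the \emph{bulk} of the neighbouring anti-cliques rather than by their maxima. In that regime one computes $z_\ast^{(k)}\le\overline{M_t^*}(k)-\tfrac1{2m^2}$, i.e.\ $G_{t+1}(k)\le\overline{G_t}(k)$, so the envelope does not grow at all. The difficulty is that once a neighbouring level has been \emph{filled} (its values compressed toward a single high value), this arithmetic-progression domination fails and a single update can move a level maximum by as much as the convexity gap $\sim\tfrac1{2m^2}\gg\delta$. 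The resolution must be amortised: such a large jump can occur only after $\sim d$ fill-in updates in each neighbouring anti-cliques, during which the increments are of order $\delta$. I would make this precise, as in the analysis of $H_{d,n}$ in Section \ref{s:p12lower}, by maintaining an invariant on the sorted within-level profiles — bounding $\widetilde{\varphi_t}(v_{k,j})$ by an arithmetic progression of gap $\delta$ whose offset grows by at most $\delta$ per local update — and verifying it inductively through the superharmonicity criterion \eqref{eq:psuper}. Reconciling the occasional large max-jumps with this slow, diffusive filling is the heart of the proof and the step I expect to be hardest.

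\emph{Part (iii).} This follows from (ii) and (i). Since $M_t^*$ is convex with $M_t^*(-m)=\tfrac12+\tfrac1{m^2}$ and (by (ii) at $k=0$) $M_t^*(0)\le\tfrac14$, and since $-m+1=\tfrac{m-1}{m}(-m)+\tfrac1m\cdot 0$ lies below the corresponding chord,
\[
M_t(-m+1)\le M_t^*(-m+1)\le \tfrac{m-1}{m}\bigl(\tfrac12+\tfrac1{m^2}\bigr)+\tfrac1m\cdot\tfrac14=\tfrac12-\tfrac1{4m}+\tfrac{m-1}{m^3}\le\tfrac12-\tfrac1{8m},
\]
where the final inequality uses $m>8$.
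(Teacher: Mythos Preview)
Your parts (i) and (iii) are correct and essentially match the paper's argument.

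In part (ii), however, there is a real gap. Your proposed invariant --- that the sorted level profile is dominated by an arithmetic progression of gap $\delta$ whose offset grows by at most $\delta$ per local update --- is \emph{not} maintained in general. The failure occurs precisely in the case you flag as hardest: when the updated vertex becomes the new level maximum (i.e.\ $j^*=d$ in the sorted order), the bound one gets from Lemma~\ref{le:two arithmetic progressions} is $\varphi_{t+1}(v_{k,j_{t+1}})\le \overline{\Phi_t}(k)-\tfrac{1}{2m^2}$, and there is no reason this should be at most $\Phi_t(k)+\delta$ when the neighbouring offsets $\Phi_t(k\pm1)$ have drifted up. Your amortisation heuristic (``such a large jump can occur only after $\sim d$ fill-in updates in each neighbouring anti-clique'') is not obviously correct either, since the offset at $k\pm1$ can itself be pushed up by a single large jump coming from levels $k\pm2$, and so on; the bookkeeping does not localise.

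The paper sidesteps amortisation entirely by building the large jumps \emph{into} the majorant. It defines $\Phi_{t+1}(k):=\max\bigl(\Phi_t(k)+\delta,\ \overline{\Phi}_t(k)-\tfrac{1}{2m^2}\bigr)$ at the updated level (and $\Phi_{t+1}=\Phi_t$ elsewhere, with a freezing rule once $\Phi_t(k)$ exceeds $\overline{\Phi}_t(k)+\tfrac{1}{2m^2}$). With this rule the sorted-profile invariant $\widetilde{\varphi_t}(v_{k,j})\le\Phi_t(k)-(d-j)\delta$ \emph{does} survive the $j^*=d$ case, by the second branch of the $\max$. The point is that passing to $g_t:=\Phi_t-\Phi_0$ and its concave envelope $h_t$, the second branch gives $g_{t+1}(k)=\overline{g}_t(k)\le h_t(k)$, so these ``large'' jumps are invisible to $h_t$; only the $+\delta$ branch moves the envelope. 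One then gets the clean per-step bound $\ell(h_{t+1})-\ell(h_t)\le 8\delta/m^2$ from Lemma~\ref{le: geometric lemma} (with $\gamma\le 2/m^2$ and $\beta\le\delta$), and the length argument goes through with the factor $(d-1)$. This intermediate majorant is the missing idea in your sketch.
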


\medskip

\begin{proof}

We verify $(i)$   by induction on $t$. The base case $t=0$ is just convexity of a quadratic. If $(k_{t+1},j_{t+1})=(k,j)$, then  it suffices to check convexity of $M_t^*$ at $k$, since $M_t^*(k) \ge M_{t-1}^*(k)$. By the monotonicity of the dynamics, 
\begin{equation} \label{eq:fave} \varphi_{t+1}(v_{k,j}) \leq \frac{M_t(k-1)+M_t(k+1)}{2}\,,
\end{equation}
since if  all values of $\varphi$ on $V_{k\pm 1}$ are increased to $M_t(k\pm 1)$, respectively, then  \eqref{eq:fave}  becomes an equality. This, together with the induction hypothesis, proves (i). 

\medskip

\noindent{\em Proof of} $(ii)$: We will first construct recursively a sequence of functions $(\Phi_t)_{t \ge 0}$, and then show that they dominate $M_t$. 

\medskip

\noindent{\bf Step 1:} For $-m\leq k \leq m$, let $$\Phi_0(k):=\varphi_0(v_{k,d})=\frac{k^2+2}{2m^2} \, .$$
For $t\geq 0$, we define $\Phi_{t+1}$ as follows:\\
\begin{itemize}
    \item If $k=k_{t+1}$ and $\Phi_t(k)<\overline{\Phi}_t(k)+\frac{1}{2m^2}$, then $$\Phi_{t+1}(k) := \max \Bigl(\Phi_t(k)+\delta,\,\, \overline{\Phi}_t(k)-\frac{1}{2m^2}\Bigr) \, ;$$
    \item otherwise, $\Phi_{t+1}(k):=\Phi_t(k)$.
\end{itemize}

\medskip

\noindent{\bf Step 2:}   Next we show that $M_t\leq \Phi_t$ for all $t$.  We will deduce this from the following stronger statement:

    Let $\widetilde{\varphi_t}$ be obtained from $\varphi_t$ by sorting the values of $\varphi_t$ within each $V_k$, so that $\widetilde{\varphi_t}(v_{k,j})$ is nondecreasing in $j$. Then we claim that
    \begin{equation} \label{eq:fstar}
        \forall k,j \quad \, \widetilde{\varphi_t}(v_{k,j})\leq \Phi_t(k)-(d-j)\delta \, .
    \end{equation}
    (Setting $j=d$ will yield that $M_t\leq \Phi_t$).
    
    We will prove \eqref{eq:fstar} by induction on $t$. 
For $t=0$, it  trivially holds. Next, we assume that \eqref{eq:fstar} holds for $t$ and   deduce that it also holds for $t+1$.  It is enough to check this for $k=k_{t+1}$.
    By the induction hypothesis, 
 for all $1\leq j \leq d$, we have $$\widetilde{\varphi_t}(v_{k\pm 1,j}) \le \Phi_t(k\pm 1)-(d-j)\delta\, ,$$ so by 
   Lemma \ref{le:two arithmetic progressions} and monotonicity, we infer that 
    \begin{equation} \label{bigbound} \varphi_{t+1}(v_{k,j_{t+1}})\leq \overline{\Phi}_t(k)-\frac{(d-1)\delta}{2}=\overline{\Phi}_t(k)-\frac{1}{2m^2} \, .
    \end{equation}
Reorder  $V_k=\{\zeta_j\}_{j=1}^d$ so that $j \mapsto \varphi_{t}(\zeta_j)$ is nondecreasing, i.e., 
    \begin{equation} \label{enumi2}
    \forall j \in \{1,\ldots,d\}, \quad \; \varphi_{t}(\zeta_j)=\widetilde{\varphi_{t}}(v_{k,j}) \,. 
    \end{equation} 
We also reorder  $V_k=\{z_j\}_{j=1}^d$  so that $j \mapsto \varphi_{t+1}(z_j)$ is nondecreasing and  we have
    \begin{equation} \label{enumi3}
    \forall j \in \{1,\ldots,d\}, \quad \;\varphi_{t+1}(z_j)=\widetilde{\varphi_{t+1}}(v_{k,j})  \,. 
    \end{equation} 
Suppose that $v_{k,j_{t+1}}=z_{j^*}$.
We consider two cases.  
\medskip

\noindent{\sf Case A:} If 
\begin{equation} \label{bigif}
    \Phi_t(k)\geq \overline{\Phi}_t(k)+\frac{1}{2m^2} \,,
\end{equation}
 then $\Phi_{t+1}(k)=\Phi_t(k)$, and \eqref{bigbound} implies that 
 $$   \widetilde{\varphi_{t+1}}(z_{j^*})=\varphi_{t+1}(v_{k,j_{t+1}})\leq
  {\Phi}_t(k)-\frac{1}{m^2}={\Phi}_t(k)-(d-1)\delta \,.$$
  This verifies the induction step in Case A for $j \le j^*$. On the other hand,  for $j>j^*$ we have 
  $z_j\in \{\zeta_j, \zeta_{j-1}\}$, so 
  $\varphi_{t+1}(z_j)\le \varphi_{t}( \zeta_j)$ and  \eqref{eq:fstar} at time $t+1$ follows from the induction hypothesis.
  
  \medskip
  
\noindent{\sf Case B:} If \eqref{bigif} does not hold, then (recalling that $k=k_{t+1}$) we have $$\Phi_{t+1}(k) = \max \Bigl(\Phi_t(k)+\delta,\,\, \overline{\Phi}_t(k)-\frac{1}{2m^2}\Bigr) \,.$$
To finish the induction step in this case, we 
consider separately three subcases:
 when $j \ne j^*$, when  
$j=j^*<d$ and when $j=j^*=d.$

First,  for all $j \ne j^*$ we have $z_j\in \{\zeta_j,\zeta_{j \pm 1}\}$, so
\begin{equation*}    \varphi_{t+1} (z_j)\leq  \varphi_t (\zeta_{j+1}) \leq \Phi_t(k)-(d-j-1)\delta  \le \Phi_{t+1}(k)-(d-j)\delta \, .
\end{equation*}

Second, if $j^*<d$, then $z_{j^*+1}\in \{\zeta_{j^*},\zeta_{j^*+1}\}$, so 
$$\varphi_{t+1}(z_{j^*} ) \le 
\varphi_{t}(\zeta_{j^*+1} ) \le 
\Phi_t(k)-(d-j^*-1)\delta  \le \Phi_{t+1}(k)-(d-j^*)\delta \,.$$

Finally, if $j^*=d$, then   \eqref{bigbound} yields that
$$  \varphi_{t+1}(z_{j^*} )  \leq \overline{\Phi}_t(k) -\frac{1}{2m^2} \leq \Phi_{t+1}(k) \, .$$

This completes the induction step, and establishes \eqref{eq:fstar}.

\medskip

\noindent{\bf Step 3:} To complete the proof of part (ii), it remains to show that the inequality $\Phi_t(k)\leq \frac{1}{4}+\frac{k^2}{2m^2}$ holds for all $t\leq \frac{1}{160}m^3d$. 
 Define $$g_t(k):=\Phi_t(k)-\Phi_0(k)=\Phi_t(k)-\frac{k^2+2}{2m^2} \, .$$ This implies that $$\overline{g}_t(k)=\overline{\Phi}_t(k)-\frac{k^2+3}{2m^2}\, .$$ 
 Thus if $k=k_{t+1}$ and $g_t(k)<\overline{g}_t(k)+\frac{1}{m^2}$, then \begin{equation}\label{eq:g inc} g_{t+1}(k) = \max \Bigl(g_t(k)+\delta, \,\overline{g}_t(k)\Bigr) \, ,\end{equation}
and otherwise  $g_{t+1}(k)=g_t(k)$.
Next, we will verify by induction on $t$ that for all $-m<k<m$, \begin{equation} \label{eq:crazy} \Delta g_t(k) := \overline{g}_t(k) - g_t(k) \geq \frac{-1}{m^2}-\delta \geq \frac{-2}{m^2} \, .\end{equation}
The base case $t=0$ is clear. For the inductive step, we assume \eqref{eq:crazy} and show that it also holds when $t$ is replaced by $t+1$. Since $g_t \le g_{t+1}$, we can focus on the case $k=k_{t+1}$; in this case, the inequality
$\Delta g_{t+1}(k)   \geq \frac{-1}{m^2}-\delta$ follows directly from  \eqref{eq:g inc} together with the lines before and after it. 

Let $h_t$ denote the concave envelope of $g_t$ (as in \eqref{concenv}). Then $h_0\equiv 0$, and 
\begin{equation}\label{eq:new delta h}
    0\geq \Delta h_t \geq \frac{-2}{m^2} \, .
\end{equation}
By the concavity of $h_t$ and \eqref{eq:g inc}, 
\begin{equation}\label{eq:new h increment}
    0\leq h_{t+1}(k_{t+1})-h_t(k_{t+1}) \leq \delta \, .
\end{equation}
Define the function 
$$h^*_{t+1}(k):=\begin{cases}
    h_t(k) \quad &k\neq k_{t+1}\,, \\
    h_{t+1}(k) \quad &k=k_{t+1} \,.
\end{cases}$$

 Extend the functions $h_t$ and $h^*_{t+1}$ to functions on $[-n,n]$ by linear interpolation. As before, let $\ell(h)$ denote  the length of the graph of $h$.
Then
\begin{equation}\label{eq:delta length}
    \ell(h_{t+1})\leq \ell(h^*_{t+1})\leq \ell(h_t)+\frac{8\delta}{m^2}\, ,
\end{equation}
with the last inequality following from Lemma \ref{le: geometric lemma} using \eqref{eq:new delta h} and \eqref{eq:new h increment}.


Finally,  we  note that $\ell(h_0)=2m$, and that if for some $k$ and $t$ $$\Phi_t(k)\geq \frac{1}{4}+\frac{k^2}{2m^2}\, ,$$ then $h_t(k)\geq \frac{1}{4}-\frac{1}{m^2}$, and  
$$\ell(h_t)\geq 2\sqrt{m^2+(1/4-1/m^2)^2} \ge 2m+\frac{1}{20m} \,,$$
since $m>8$. Thus by \eqref{eq:delta length}, $$
t > \frac{1/(20m)}{8\delta/m^2} =\frac{m^3 (d-1)}{160}\, .$$
This concludes the proof of (ii).
\item From (ii) we deduce that for $t\leq \frac{m^3(d-1)}{160}$ we have  $M_t^*(0) \le \frac14$, whence by (i),
\begin{eqnarray*}
 M_t(-m+1)  &\le&  M_t^*(-m+1)   \\ &\le&  
\Bigl(1-\frac1m\Bigr)M_t^*(-m)+\frac{1}{m} M_t^*(0)   \\&\le&  \Bigl(1-\frac1m\Bigr)\Bigl(\frac12+\frac1{m^2}\Bigr)+\frac{1}{4m}    \le \frac12-\frac{1}{8m} \,. 
\end{eqnarray*}


\end{proof}

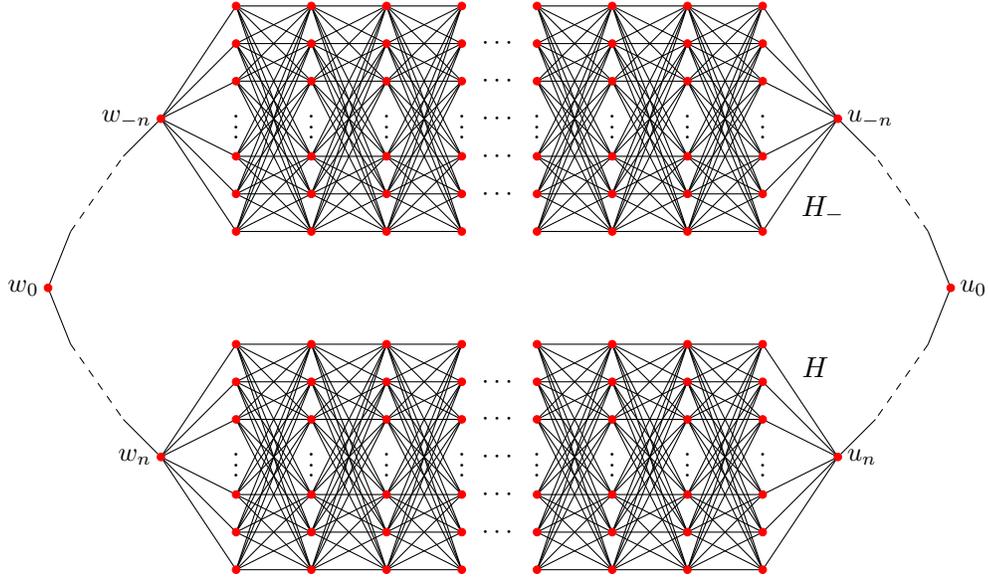
\begin{figure}[h]
    \centering
\begin{tikzpicture}
  \def\goody{2.25};
  \foreach \sign in {1,-1} {
  \def\basey{ \sign*\goody };
  \foreach \x in {-3, -2, -1, 1, 2, 3} {
    \foreach \y in {-1.5, -1, -0.5, 0.5, 1, 1.5} {
        \foreach \yy in {-1.5, -1, -0.5, 0.5, 1, 1.5} {
          \draw (\x,\basey+\y) -- (\x+1,\basey+\yy);
        }
    }
  }

  \foreach \y in {-1.5, -1, -0.5, 0.5, 1, 1.5} {
    \draw (-4,\basey) -- (-3,\y+\basey);
  }
  \foreach \y in {-1.5, -1, -0.5, 0.5, 1, 1.5} {
    \draw (5,\basey) -- (4,\y+\basey);
  }
  \draw (-4.5,\basey-\sign*0.5) -- (-4,\basey);
  \draw (5.5,\basey-\sign*0.5) -- (5,\basey);
  \draw (-5.5,0) -- (-5.2,\sign*0.75);
  \draw (6.5,0) -- (6.2,\sign*0.75);
  \draw[dashed] (6.2,\sign*0.75) -- (5.5,\basey-\sign*0.5);
  \draw[dashed] (-5.2,\sign*0.75) -- (-4.5,\basey-\sign*0.5);

  \foreach \x in {-3, ..., 4} {
  \filldraw[black] (\x,\basey) circle (0cm) node {$\vdots$} ;
    \foreach \y in {-1.5, -1, -0.5, 0.5, 1, 1.5} {
        \filldraw[red] (\x,\basey+\y) circle (0.05cm) node[anchor=north] {} ;
    }
  }

  \foreach \y in {-1,-0.5,0,0.5,1}
    \filldraw[black] (0.5,\basey+\y) circle (0cm) node {$\cdots$} ;
  }

  \filldraw[red] (-4,\goody) circle (0.05cm) node[anchor=east, color=black] {$w_{-n}$} ;
  \filldraw[red] (5,\goody) circle (0.05cm) node[anchor=west, color=black] {$u_{-n}$} ;
  \filldraw[red] (-4,-\goody) circle (0.05cm) node[anchor=east, color=black] {$w_n$} ;
  \filldraw[red] (5,-\goody) circle (0.05cm) node[anchor=west, color=black] {$u_n$} ;

  \filldraw[red] (-5.5,0) circle (0.05cm) node[anchor=east, color=black] {$w_0$} ;
  \filldraw[red] (6.5,0) circle (0.05cm) node[anchor=west, color=black] {$u_0$} ;

  \draw (4.8, \goody-1.2) node {{\large $H_-$}};
  \draw (4.7, -\goody+1.2) node {{\large $H$}};
\end{tikzpicture}
    \caption{The {\bf accordion graph} $G_{d,n}$ which includes the graph $H$ (depicted in Figure \ref{fig:half-accordion}), together with another copy, $H_-$, above it. These are connected by two paths of length $2n$, one from the left and one from the right. The  two endpoints of the left (right) path are connected to all vertices in the leftmost (rightmost) anti-cliques of $H_-$ and $H$, respectively. }
    \label{fig:accordion large}
\end{figure}

Given $D \ge 20$ and $N \ge 60D$, write $d=\lfloor D/2\rfloor$ and let $n$ be the largest multiple of $d$ that satisfies
$10n \le N$.  Let $m:=n/d \ge 12$. Consider the {\bf accordion graph} $G_{d,n}$ depicted in Figure \ref{fig:accordion large}. It consists of two copies, $H_-$ and $H$, of $H(m,d)$, one above the other, connected by two paths of length $2n$, one from the left and one from the right.  The  two endpoints of the left (right) path are connected to all vertices in the leftmost (rightmost) anti-cliques of $H_-$ and $H$, respectively. We endow $G_{d,n}$ with the initial profile $f_0$ that takes value 0 on $V(H)\cup\{w_j\}_{j=1}^n \cup\{u_j\}_{j=1}^n $, takes value 1 on $V(H_-)\cup\{w_{-j}\}_{j=1}^n \cup\{u_{-j}\}_{j=1}^n $, and satisfies $f_0(w_0)=f_0(u_0)=1/2$.
\begin{thm}
     Fix $p \in [2,3]$ and run the $\ell^p$-energy minimizing dynamics on the accordion graph $G_{d,n}$ defined above with initial profile $f_0$. 
     If $d \ge 10$ and $n \ge 12d$ is a multiple of $d$, then for every updating sequence and every $t\leq \frac{1}{6400}n^3d^{\frac{3-p}{p-1}}$,  we have $\osc(f_t)\geq \frac{1}{2}$.
\end{thm}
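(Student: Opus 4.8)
The plan is to prove that some vertex of the lower copy $H$ (the ``$0$-side'') stays $\le\tfrac14$ throughout the allowed window. This suffices: the dynamics commute with $f\mapsto 1-f$, and $G_{d,n}$ admits the automorphism $\sigma$ interchanging $H$ with $H_-$ and $w_i$ with $w_{-i}$, $u_i$ with $u_{-i}$, under which $f_0\circ\sigma=1-f_0$. Applying the $0$-side bound both to the given update sequence and to its conjugate $\{\sigma(v_s)\}$ then yields a vertex of $H$ with $f_t\le\tfrac14$ and a vertex of $H_-$ with $f_t\ge\tfrac34$, so $\osc(f_t)\ge\tfrac12$. As in the tree and $H_{d,n}$ constructions, I would not track $f_t$ directly but a dominating profile. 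Let $h_0\ge f_0$ equal $1$ on the whole $1$-side, equal $\tfrac12$ at $w_0,u_0$, and on $H$ together with the positive-index path vertices be a slowly varying convex ``tent'': the quadratic $\tfrac{k^2}{2m^2}$ of Lemma~\ref{lem:half graph} along the anticlique spine (shifted by the within-clique staircase $(j-1)\delta$), and a quadratic in the path coordinate interpolating from $\tfrac12$ at $w_0$ down to the junction value at $w_n$, and similarly for the $u$-path. By monotonicity (Claim~\ref{claim-mono}) the same update sequence from $h_0$ gives $h_t\ge f_t$, so it suffices to bound $h_t$ on the central anticlique $V_0(H)$.

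The core task is to control how fast opinion leaks from the length-$2n$ paths into $H$, and two effects must be combined. First, each path is a width-one segment of length $2n$, so the concave-envelope/graph-length estimate behind Claim~\ref{cl:segment} (via the geometric Lemma~\ref{le: geometric lemma}) shows that high opinion needs time of order $n^3=m^3d^3$ to cross a path and reach a junction $w_n$ or $u_n$. Second, at each junction the single incoming path edge competes against an entire reservoir anticlique of size $d$: the value at $w_n$ is pinned by $V_{-m}(H)$, which is in turn pinned by $V_{-m+1}(H)$. The superharmonicity criterion \eqref{eq:psuper} quantifies this: if a vertex sees $d$ equal low neighbours at height $z$ and one high neighbour at height $y$, its equilibrium height is $z+(y-z)/(1+d^{1/(p-1)})$, so the reservoir attenuates the pull of the high neighbour by the factor $d^{-1/(p-1)}$. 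Carrying this attenuation correctly through the junction is what forces the tuning of $\delta$ (so that $h_0$ is $p$-superharmonic at $w_n,u_n$ and at $V_{\pm m}(H)$, exactly as in the superharmonicity check at the vertex $-n$ in the $H_{d,n}$ construction) and is what produces the extra $p$-dependent factor $d^{(3-p)/(p-1)}$ beyond the bare path time.

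To conclude I would run the concave-envelope and graph-length bookkeeping that already underlies Step~3 of Lemma~\ref{lem:half graph} on the $0$-side of $h_t$, tracking the spine through Lemma~\ref{lem:half graph} (where Lemma~\ref{le:two arithmetic progressions} forces the small effective step $\delta$ on anticlique updates) and the paths through the Claim~\ref{cl:segment} estimate, coupled at the junctions. Since raising $V_0(H)$ to $\tfrac14$ forces the graph length to grow by a definite amount, and each update increases it by at most order $n^{-4}$ on the path and order $\delta m^{-2}$ on the spine, summing the increments yields the time lower bound $m^3d^{\beta_p}$ with $\beta_p=\tfrac{2p}{p-1}$ (valid for $p\le 3$), matching $\tfrac1{6400}n^3d^{(3-p)/(p-1)}$. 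The comparison is consistent because the interior equilibration time $m^3d$ of Lemma~\ref{lem:half graph} is negligible next to the junction rise time $m^3d^{\beta_p}$, so while the junction values stay below $\tfrac12$ the quasi-static interior stays below $\tfrac14$; parts (i) and (iii) of Lemma~\ref{lem:half graph} (convexity of $M_t^*$ and the bound $M_t(-m+1)\le\tfrac12-\tfrac1{8m}$) provide the comparison that transfers the boundary bound to $M_t(0)$ and controls the back-flux of $H$ into the paths.

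I expect the main obstacle to be precisely the gluing at the junctions $w_n,u_n$, where $p$-independent width-one averaging on the path meets $p$-dependent width-$d$ reservoir dynamics on the anticliques. Producing a single monotone profile $h_0$ that is simultaneously $p$-superharmonic across this interface, and running a length/envelope accounting that combines the two length-scales (path width $1$, spine width $d$) so as to yield exactly the exponent $\beta_p$ in $m^3d^{\beta_p}$, is the delicate part. The reservoir-attenuation computation through \eqref{eq:psuper} is also where the hypotheses $2\le p\le 3$ enter (for smaller $D$ one reverts to the cycle via Theorem~\ref{thm:cycle1}, and for $p<2$ the graph $H_{d,n}$ is already superior), along with the explicit constant $\tfrac1{6400}$.
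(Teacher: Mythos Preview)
Your overall architecture---symmetry reduction, a monotone dominating profile, and invoking Lemma~\ref{lem:half graph} on the spine of $H$---matches the paper's. The divergence is in how you treat the paths and the junctions, and that is where the key idea you are missing lives.

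You propose a quadratic profile on each path and plan to track its drift via the graph-length argument of Claim~\ref{cl:segment}, then ``couple at the junctions'' with the spine bookkeeping. The paper does something much simpler: the dominating profile $\psi_0$ is \emph{linear} on each positive path (hence fixed by averaging at interior path vertices), and the entire $H$-part is scaled down by a factor $\lambda=16\,d^{-(p+1)/(p-1)}$. This single parameter $\lambda$ is what you are lacking. With it, two one-line superharmonicity checks via \eqref{eq:psuper} (equations \eqref{eq:psi0} and \eqref{eq:endpoint} in the paper) show that $\psi_0$ is $p$-superharmonic at $w_n$, $u_n$, and at every vertex of $V_{\pm m}$. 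Consequently $\psi_t\le\psi_0$ on the paths and on the outer anticliques for all $t\le t^*:=\lfloor(d-1)m^3/160\rfloor$, and only the interior of $H$ moves. Lemma~\ref{lem:half graph}(ii) then bounds the interior drift by $\lambda/4$ over one block of length $t^*$; iterating gives $\psi_{Lt^*}\le\psi_0+L\lambda/4$, and the claimed time is $t^*/(2\lambda)\sim m^3 d^{2p/(p-1)}=n^3 d^{(3-p)/(p-1)}$.

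Your dynamic-path route is not obviously unworkable, but the ``gluing'' you flag as the delicate part is a real problem for it. A quadratic path profile drifts at rate $n^{-2}$ per update, and a joint length accounting across the junction would have to reconcile two incommensurable step sizes ($n^{-2}$ on the path side, $\lambda\delta$ on the anticlique side) through a vertex where the update is $p$-dependent and degree-asymmetric. The paper sidesteps this entirely by freezing the paths; the scale separation is absorbed into $\lambda$, and the argument becomes an induction on the three clean inequalities $(a_t),(b_t),(c_t)$. Your remark about ``tuning $\delta$'' is not sufficient: $\delta=1/(m^2(d-1))$ is already fixed by Lemma~\ref{lem:half graph}, and a second, independent scale is needed to make the interface superharmonic.
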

\begin{proof}
    We will bound the opinion profiles $f_t$ that arise from $f_0$ by the profiles $\psi_t$ that arise from the initial profile $\psi_0 \ge f_0$ defined by 
    
    \begin{eqnarray*}
        &&\psi_0\equiv 1 \quad  \text{on} \quad  V(H_-)  \, , \\
        &&\psi_0(w_{-i}) = \psi_0(u_{-i})=1 \quad 0\leq  i\leq n \, ,\\
        &&\psi_0(w_n) = \psi_0(u_n) = \frac{1}{n}d^\frac{-1}{p-1}+\lambda \frac{m^2+2}{2m^2} \, ,\\
        &&\psi_0(w_i) = \psi_0(u_i)= \frac{n-i}{n}+\frac{i}{n}\psi_0(w_n) \quad 0\leq i < n \, ,\\
        &&\psi_0(v_{k,j}) = \lambda \Bigl(\frac{k^2}{2m^2}+ (j-1)\delta\Bigr) \quad -m\leq k\leq m, \quad 1\leq j\leq d \, ,
    \end{eqnarray*}
    where $\delta=\frac{1}{m^2(d-1)}$ as in the previous lemma, and 
    $$ \lambda = 16m n^{-1}d^{\frac{-2}{p-1}} = 16d^{-\frac{1+p}{p-1}} \, .
$$
    The value of $\psi_0(w_n)$ was chosen so that
    \begin{equation} d\Bigl(\psi_0(w_n)-\max_{v\in V_{-m}}\psi_0(v)\Bigr)^{p-1} =\Bigl(\frac{1}{n}\Bigr)^{p-1}\, \end{equation}
and similarly for $u_n$ and $V_m$ instead of $w_n$ and $V_{-m}$. 

    Next, we will verify by induction that for every $t\leq \frac{1}{160}(d-1)m^3$, the following inequalities hold:
    \begin{itemize}
        \item[($a_t$)]  $\psi_t(v_{k,j})\leq \lambda \Bigl(\frac{1}{4}+\frac{k^2}{2m^2}\Bigr)$ for all $-m < k < m$ and $1\leq j \leq d\,$;
        \item[($b_t$)] $\psi_t(v_{\pm m,j})\leq \psi_0(v_{\pm m,j})$ for all $1\leq j \leq d\, $ ;
        \item[($c_t$)] $\psi_t(w_i)\leq \psi_0(w_i)$ and $\psi_t(u_i)\leq \psi_0(u_i)$ for all $0\leq i\leq n$ . 
    \end{itemize}
The case $t=0$ is clear. To verify the induction step, assume that the inequalities $(a_s)$, $(b_s)$ and $(c_s)$ hold for all $s\leq t$ and that $t+1\leq \frac{1}{160}(d-1)m^3$. 
Then the inequality $(a_{t+1})$ follows from the second clause of Lemma \ref{lem:half graph}, because under $(b_s)_{s\leq t}$ we have $\psi_{t+1} \leq \lambda \varphi_{t'}$, where $t'$ is the number of updates in  the interior anti-cliques of $H$ (i.e., in $\cup_{|k|< m} V_k$) during the first $t+1$ steps. To verify $(b_{t+1})$, it is enough to check that for every $1\leq j\leq d$,
\begin{equation}\label{eq:psi0}
    \Bigl(\psi_t(w_n)-\psi_0(v_{-m,j})\Bigr)^{p-1}\stackrel{?}{\leq} \sum_{i=1}^d \Bigl(\psi_0(v_{-m,j})-\psi_t(v_{-m+1,i})\Bigr)^{p-1}
\end{equation}
(The case of $u_n$ and $v_{m,j}$ will follow similarly).

By the induction hypothesis,  
\begin{eqnarray*}
\psi_t(w_n)-\psi_0(v_{-m,j}) \leq \frac{1}{n}d^{\frac{-1}{p-1}} + \frac{\lambda}{m^2} = \\
\frac{1}{n}d^{\frac{-1}{p-1}}  + \frac{16d^{-\frac{p+1}{p-1}}}{m^2} 
\leq \frac{2}{n}d^\frac{-1}{p-1},\end{eqnarray*} 
where the last inequality holds since $d \ge 10$ and $m \ge 12$. 

Therefore, the LHS of \eqref{eq:psi0} is at most $2^{p-1} n^{1-p}d^{-1}$.  By Lemma \ref{lem:half graph} (iii), 
\begin{equation}
    \max_{1\leq i \leq d}\psi_t(v_{-m+1,i})\leq \lambda \Bigl(\frac{1}{2}-\frac{1}{8m} \Bigr) \, .
\end{equation}
Thus by our choice of $\psi_0$ and $\lambda$, the RHS of \eqref{eq:psi0} is at least 
\begin{equation} d \Bigl(\frac{\lambda}{8m}\Bigr)^{p-1} = d\Bigl(\frac{16d^{-\frac{p+1}{p-1}}}{8m}\Bigr)^{p-1} =2^{p-1}n^{1-p}d^{-1} \, ,
\end{equation}
so  \eqref{eq:psi0} holds. 

It remains to show $(c_{t+1})$. For $0\leq i<n$, this follows from $(c_t)$ by linearity of $\psi_0$. To show that $\psi_{t+1}(w_n)\leq \psi_0(w_n)$, it is enough to verify that
\begin{equation}\label{eq:endpoint}
\Bigl(\psi_t(w_{n-1})-\psi_0(w_n)\Bigr)^{p-1}\stackrel{?}{\leq} \sum_{i=1}^d \Bigl(\psi_0(w_n)-\psi_t(v_{-m,i})\Bigr)^{p-1} \,.
\end{equation}
The LHS is bounded above by $n^{1-p}$. The RHS is bounded below by $$d\Bigl(\psi_0(w_n)- \max_{v\in V_{-m}} \psi_0(v)\Bigr)^{p-1} = n^{1-p} \, .$$
This concludes the induction step, so the inequalities $(a_t),(b_t),(c_t)$ hold for all $t\leq t^*$, where $t^*:= \Big\lfloor\frac{1}{160}(d-1)m^3\Big\rfloor \ge \frac{1}{200}dm^3$.
(Here we used our assumptions that $ d=\lfloor D/2 \rfloor \ge 10$ and $m \ge 12$.) 

We deduce that $\max_{t \le t^*} \psi_{t}\leq \psi_0+\frac{\lambda}{4}$, and therefore $\max_{t \le Lt^*}  \psi_{t}\leq \psi_0+\frac{L\lambda}{4}$ for all integer $L\geq 1$.
In particular,  
  for integer  $L\leq   \frac{1}{\lambda} $, we have $\max_{t \le t^*} \psi_{t }(v_{0,1})\leq\frac{1}{4}$.
Therefore, $\min f_t \leq \min \psi_t \leq\frac{1}{4} \quad  \text{ for all }\quad  t\leq \frac{t^*}{2\lambda}$. (Note that $1/\lambda>2$.)
By symmetry, $\max f_t \geq \frac{3}{4}$ for all $t\leq \frac{t^*}{2\lambda}$, so $\osc(f_t) \ge 1/2$ for these values of $t$. 

Since $\frac{t^*}{2\lambda} \ge \frac{1}{6400}d^\frac{2p}{p-1}m^3=\frac{1}{6400}d^{\frac{3-p}{p-1}}n^3 \, ,$ this concludes the proof. 

\end{proof}
  
    
\section{Open problems and concluding remarks}\label{s:open}
 The upper and lower bounds in our main theorems suggest that the $\epsilon$-consensus time should be monotone decreasing in $p$. 
\begin{question} 
       Is there an absolute constant $C$ such that for every choice of integer $n> 1$,     connected graph $G$ with $n$ vertices,   initial profile $f_0$and $\epsilon>0$, for every   $1<p_1<p_2<\infty$, we have
       $$\E[\tau_{p_2}(\epsilon)]\leq C\E[\tau_{p_1}(\epsilon)] \, ?$$
              
\end{question}

\bigskip

Similar to the Lipschitz learning dynamics, the dynamics \eqref{eq:pdef} for $1<p<\infty$ can also be considered with prescribed boundary values. 
It is not hard to show that given boundary values $f_0|_B$, the dynamics converge to a limiting profile which is the unique $p$-harmonic extension $h_p$ of the values on the boundary to the whole graph (using the same energy considerations discussed in subsection \ref{sub:basic}). 



It is natural to ask how quickly the dynamics converge  towards the unique $p$-harmonic extension $h_p$. To this end, we define the $\epsilon$-approximation time by:

\begin{equation}
    \tau_p^*(\epsilon):= \min\{t\geq 0 :\, \|f_t -h_p\|_\infty \leq \epsilon\}\, .
\end{equation}

\begin{question}
    Given $1<p<\infty$, what can be said about the asymptotics of $\tau_p^*(\epsilon)$? 
    \end{question}
     
    For $p>2$,  we expect that there are absolute constants $C_p,\beta^*_p$ such that for every connected graph $G=(V\cup B,E)$ with $|V|=n$ inner vertices, every initial profile $f_0:V\cup B \rightarrow [0,1]$ and every $\epsilon>0$, $$\E[\tau_p^*(\epsilon) ] < C_p n^{\beta^*_p} \log\frac{1}{\epsilon}\, ;$$
we conjecture that this holds with $\beta^*_p=\beta_p$.

For $1<p<2$, a simple examples with $|V|=|B|=2$ shows that the dependence on $\epsilon$ is not logarithmic: Let $G$ be the complete graph on 4 vertices, and suppose that $f_0(b)=1$ for one vertex $b \in B$ while $f_0$
vanishes on the other three vertices. Then a  direct calculation shows that 
$$\forall p \in (1,2), \quad  \tau_p^*(\epsilon) \ge c_p \epsilon^{\frac{p-2}{p-1}} \, ,$$
where $c_p>0$ depends only on $p$.

\section*{Acknowledgments}
We thank Guy Blachar for his help with creating the figures in the paper. G.A. was supported by ISF grant $\#957/20$.

\bibliographystyle{plain}
\bibliography{lp_bibliography}

\end{document}